\DeclareMathOperator{\Id}{Id}
\DeclareMathOperator{\supp}{supp}
\newcommand{\coleq}{\mathrel{\mathop:}=}
\declaretheorem[name=Theorem,numberwithin=section,refname={theorem,theorems},Refname={Theorem,Theorems}]{theorem}
\declaretheorem[name=Proposition,refname={proposition,propositions},Refname={Proposition,Propositions},sibling=theorem]{proposition}
\declaretheorem[name=Lemma,refname={lemma,lemmas},Refname={Lemma,Lemmas},sibling=theorem]{lemma}
\declaretheorem[name=Remark,refname={remark,remarks},Refname={Remark,Remarks},sibling=theorem]{remark}
\declaretheorem[name=Remarks,refname={remark,remarks},Refname={Remark,Remarks},sibling=theorem]{remarks}
\crefname{proposition}{proposition}{propositions}
\newcommand{\cB}{{\mathscr{B}}}
\newcommand{\bZ}{{\mathbb{Z}}}
\newcommand{\cC}{\mathscr{C}}
\newcommand{\cD}{\mathscr{D}}
\newcommand{\cE}{\mathscr{E}}
\newcommand{\cF}{\mathscr{F}}
\newcommand{\cK}{\mathscr{K}}
\newcommand{\cL}{\mathscr{L}}
\newcommand{\cM}{\mathscr{M}}
\newcommand{\cO}{\mathscr{O}}
\newcommand{\cS}{\mathscr{S}}
\newcommand{\bN}{\mathbb{N}}
\newcommand{\bR}{\mathbb{R}}
\newcommand{\bC}{\mathbb{C}}
\newcommand{\pd}{\partial}
\newcommand{\ud}{\mathrm{d}}
\newcommand{\abso}[1]{\left|#1\right|}
\newcommand{\norm}[1]{\left\lVert#1\right\rVert}
\title{Projective descriptions of spaces of functions and distributions}
\author{Christian Bargetz\footnote{Institut für Mathematik, Universität Innsbruck, Technikerstraße 13, 6020 Innsbruck, Austria. e-mail: \href{mailto:christian.bargetz@uibk.ac.at}{christian.bargetz@uibk.ac.at}.}, Eduard A.~Nigsch\footnote{Institute of Analysis and Scientific Computing, TU Wien, Wiedner Hauptstraße 8-10, 1040 Vienna, Austria. e-mail: \href{mailto:eduard.nigsch@tuwien.ac.at}{eduard.nigsch@tuwien.ac.at}.}, Norbert Ortner\footnote{Institut für Mathematik, Universität Innsbruck, Technikerstraße 13, 6020 Innsbruck, Austria. e-mail: \href{mailto:mathematik1@uibk.ac.at}{mathematik1@uibk.ac.at}.}}
\begin{document}
\maketitle

\begin{abstract}
We present projective descriptions of classical spaces of functions and distributions. More precisely, we provide descriptions of these spaces by semi-norms which are defined by a combination of classical norms and multiplication or convolution with certain functions. These seminorms are simpler than the ones given by a supremum over bounded or compact sets.
\end{abstract}

{\bfseries Keywords: }{distribution spaces, topology of uniform convergence on bounded sets, topology of uniform convergence on compact sets, $p$-integrable smooth functions, compact sets, bounded sets}

{\bfseries MSC2010 Classification: }{46F05, 46E10, 46A13, 46E35, 46A50, 46B50}

\section{Introduction and Notation}\label{sec1}

Spaces of distributions typically are dual spaces of spaces of smooth functions and carry the strong topology which is the one of uniform convergence on bounded sets. When working with these spaces such a description might be unhandy and a description with seminorms in terms of convolution and multiplication with smooth functions and classical norms might be preferable. We use the notion of a distribution space in the sense of~\cite[p.~7]{Sch4} and~\cite[Def.~2, p.~319]{H}.  

In \cite[Ex.\ 3.b), p.~67]{Dieu2} the space $\cK(\bR) = \cD^0(\bR) = \cK$ of continuous functions with compact support is equipped with the projective limit topology generated by the seminorms $p_{\varphi}$, $\varphi \in \cE^0$ or $\cE^0_+$, defined by
\[ \cK \ni f \mapsto p_\varphi(f) = \norm{\varphi f}_{\infty}. \]
It coincides with the topology of the strict inductive limit
\[ \varinjlim \cK_K, \qquad K \subset \bR \textrm{ compact}, \]
see \cite[pp.~95--99]{Sch5}. Thus, the strict inductive limit topology of $\cK(\bR)$ is described as the topology of a projective limit (compare Proposition~\ref{prop20}). The space $\cK$ is embedded into $\cL(\cE^0, \cK)$ by the mapping
\begin{align*}
  \cK & \to \cL(\cE^0, \cK), \\
  f &\mapsto [\varphi \mapsto \varphi \cdot f].
\end{align*}
Due to partial continuity and hypocontinuity of the bilinear mapping
\begin{align*}
  \cK \times \cE^0 & \to \cK, \\
  (f, \varphi) & \mapsto f \cdot \varphi 
\end{align*}
the topology of $\cK$ is the induced topology of $\cL_s(\cE^0, \cK)$ or of $\cL_b(\cE^0, \cK)$. Note that $\cK$ is a strict (LB)-space and a (DF)-space.

Projective descriptions of inductive limits are systematically investigated for weighted spaces of \emph{continuous} functions on completely regular Hausdorff spaces in \cite{BMS}.

Projective descriptions of spaces of \emph{differentiable} functions, usually occurring in the theory of distributions, are given in \cite[pp.~98--99]{Sch5}: $\cE^m$, $m \in \bN_0 \cup \{ \infty \}$, $\cS$, $\cO_M$, $\cD^m$, $m \in \bN_0$, $\cB^m$ and $\dot\cB^m$, $m \in \bN_0 \cup \{\infty\}$.

A projective description of the strict (LF)-topology of $\cD(\Omega)$ is given in \cite[p.~21]{GL} and in \cite[Example 7, p.~171]{H}.

Let us treat as an example the topology of the space $\cO_M$ of multipliers from $\cS$ to $\cS$ or from $\cS'$ to $\cS'$. This topology is generated by the seminorms $p_{\varphi, m}$, $\varphi \in \cS$, $m \in \bN_0$, defined by
\[
  \cO_M \ni f \mapsto p_{\varphi, m}(f) = \sup_{\abso{\alpha} \le m} \norm{\varphi\, \pd^\alpha f}_{\infty}
\]
\cite[$3^\circ$, p.~99]{Sch5}. A.~Grothendieck \cite[Chap.~II, p.~130]{G} remarks that this topology is the induced topology of the spaces $\cL_s(\cS, \cS)$ or $\cL_b(\cS, \cS)$---see also~\cite[p.~246]{Sch1}---if $\cO_M$ is embedded into $\cL(\cS, \cS)$ by the multiplication mapping
\[ f \mapsto [\varphi \mapsto \varphi \cdot f ] \]
due to the partial continuity and hypocontinuity of the multiplication
\[ \cO_M \times \cS \xrightarrow{\cdot} \cS. \]
The ``finite order versions'' of the space $\cO_M$ are J.~Horv\'ath's spaces $\cO_C^m = \cS^m_{-\infty}$, $m \in \bN_0$ \cite[Ex.~11, p.~90 and Ex.~9, p.~173]{H}. They are non-strict (LB)-spaces, i.e.,
\[ \cO_C^m = \varinjlim_{k} (1+ \abso{x}^2)^k \cS_0^m = \varinjlim_k (1 + \abso{x}^2)^k \dot\cB^m = \varinjlim_k (\dot \cB^m)_{-k}. \]
A projective description is given in \cite[$m=0$: Prop.~1, $m \in \bN_0$: Prop.~2]{OW} in terms of the seminorms $p_\varphi$, $\varphi \in \cS$, defined by
\[ \cO_C^m \ni f \mapsto p_\varphi(f) = \sup_{\abso{\alpha} \le m} \norm{\varphi\, \pd^\alpha f}_{\infty}. \]
Obviously, the topology of $\cO_C^m$ is the induced topology of the space $\cL_s(\cS, \dot\cB^m)$ or $\cL_b(\cS, \dot\cB^m)$ if $\cO_C^m$ is embedded into $\cL(\cS, \dot\cB^m)$ by the mapping
\[ \cO_C^m \to \cL(\cS, \dot\cB^m), \quad f \mapsto [\varphi \mapsto f \cdot \varphi ]. \]
Furthermore, we mention that we describe in \cite[Prop.~2.5 and Prop.~3.2]{BNO} the topology of the spaces
\[ \cD_{L^p, c},\ 1 \le p \le \infty,\qquad L^p_c,\ 1 < p \le \infty,\qquad \cM^1_c \]
by ``function''-seminorms. The subscript ``small $c$'' denotes the $\kappa$-topology, i.e., the topology of uniform convergence on compact subsets of $\dot\cB$ (if $p=1$) and $\cD'_{L^q}$ ($p>1$, $\frac{1}{p} + \frac{1}{q} = 1$), $L^q$ ($\frac{1}{p} + \frac{1}{q} = 1$) and $\cC_0$, respectively.

If $p = \infty$ this was first done in \cite[(3.5) Cor., p.~71]{DD} by generalising the so-called Buck's or strict topology on $\cB^0$, i.e., $\cB^0_b \coleq (\cB^0, \tau(\cB^0, \cM^1))$. Buck's topology on $\cB^0$ implies that
\[ (\cB^0_b)' = \cM^1, \]
whereas $\dot\cB_c = (\dot\cB, \kappa(\dot\cB, \cD'_{L^1})) = (\dot\cB, \tau(\dot\cB, \cD'_{L^1})) $ yields
\[ (\dot\cB_c)' = \cD'_{L^1} \]
algebraically and topologically, cf. Proposition~1.2.1 in~\cite[p.~6--7]{DVAF}.

Therefore, our task is the description of the topologies $\beta(E,F)$, $\kappa(E,F)$ and $\tau(E,F)$ by ``func\-tion''-seminorms for $E$ a space of functions or distributions and $(E,F)$ is a dual pair. These topologies are defined by seminorms $p_B$ and $p_C$, where $B$ and $C$ are absolutely convex subsets of $F$, $B$ bounded, $C$ compact or weakly compact:
\begin{gather*}
  E \ni e \mapsto p_B(e) = \sup_{f \in B} \abso{\langle e, f \rangle}, \\
  E \ni e \mapsto p_C(e) = \sup_{f \in C} \abso{\langle e, f \rangle}.
\end{gather*}
In contrast to these definitions, ``function''-seminorms do not involve subsets $B,C$.

In order to illustrate this point, recall the above mentioned description of the topology of the multiplicator space $\cO_M$ by the seminorms $p_{\varphi,\alpha}$, $\varphi \in \cS$, $\alpha \in \bN^n_0$:
\[ \cO_M \ni f \mapsto p_{\varphi, \alpha}(f) = \norm{\varphi\, \pd^\alpha f}_{\infty}, \]
equivalent to the topology described by the seminorms $p_B$, $B \subseteq \cO'_M$ bounded:
\[ p_B(f) = \sup_{S \in B} \abso{ {}_{\cO_M} \langle f, S \rangle_{\cO_M'} }, \]
wherein $\cO_M'$ is the space of ``very rapidly decreasing'' distributions \cite[Chap.~II, p.~130]{G}. The equivalence of both descriptions is a simple consequence of the characterisation of bounded subsets $B$ of $\cO_M'$:
\[ B \subseteq \cO_M' \textrm{ bounded } \Longleftrightarrow \exists \varphi \in \cS\ \exists \alpha \in \bN_0^n: B \subseteq \pd^\alpha (\varphi B_{1,1} ), \]
due to the inequality
\begin{align*}
  p_{\varphi,\alpha}(f) & = \norm{\varphi \pd^\alpha f}_{\infty} = \sup_{h \in B_{1,1}}\abso{{_{L^1}\langle} h, \varphi \pd^\alpha f \rangle_{L^\infty} } = \sup_{h \in B_{1,1}} \abso{{}_{\cO_M} \langle f, \pd^\alpha(\varphi h)\rangle_{\cO_M'}} \\
  &\ge \sup_{S \in B} \abso{ \langle f, S \rangle } = p_B(f).
\end{align*}
The opposite inequality is a consequence of the composition of the two continuous mappings
\[ L^1 \to L^1_\infty \coleq \bigcap_{k=0}^\infty (1 + \abso{x^2})^{-k} L^1,\quad, f \mapsto f \cdot \varphi,\quad \varphi \in \cS \]
and
\[ \pd^\alpha \colon L^1_\infty \to \cO_M'. \]
Let us remark that the above characterisation of bounded sets in $\cO_M'$ corresponds to the representation of $\cO_M'$ as a non-strict inductive limit,
\[ \cO_M' = \varinjlim_{\alpha} \pd^\alpha L^1_\infty = \varinjlim_{\alpha, \varphi} \pd^\alpha(\varphi L^1) \]
which is a regular and countable inductive limit of Fr\'echet spaces.

In Section \ref{sec2} we characterise bounded subsets of $\cS$ and we describe the topology of $\cS'$ by "function"-seminorms.

In Section \ref{sec3} we characterise bounded subsets of $\cO_C$ and $\cO_M'$ and we describe the topologies of $\cO_C'$ and $\cO_M$ by ``function''-seminorms.

Similarly, bounded and relatively compact subsets as well as seminorms in the spaces $\cD_{L^p}$, $\cD_{L^p, c}$, $1 \le p < \infty$, $\cD'_{L^q}$, $\cD'_{L^q, c}$, $1 < q \le \infty$, $\dot\cB$, $\dot \cB_c$, $\cD'_{L^1}$, $\cD'_{L^1, c}$, $\dot\cB'$, $\dot \cB'_c$ are described in Section \ref{sec4}.

In Section \ref{sec5} we give a projective description of the topologies of the (LB)-spaces of polynomially increasing functions or measures:
\[ (L^p)_{-\infty},\quad (\cM^1)_{-\infty},\quad \cO_C^m = (\dot \cB^m)_{-\infty} = \cS^m_{-\infty}, \quad m\in\mathbb{N}_0.\]

In Section~\ref{sec6} seminorms in spaces of functions and measures with compact support are presented:
\[ \cD^m(\Omega),\quad L^p_{\textrm{comp}},\quad \cM_{\textrm{comp}},\quad  m\in\mathbb{N}_0 \]

In Section~\ref{sec7} we correct an error in \cite[Proposition 6.4]{BNO}. We characterise \emph{weakly} relatively compact subsets of $\cM^1$ and we prove a generalisation of Buck's topology by a projective description of the spaces $(\cB^m, \tau(\cB^m, \cD'^m_{L^1}))$ and $(\cO_C^m, \tau(\cO_C^m, \cO^{\prime m}_C))$, $m \in \bN_0$. Note that $\cB = \varprojlim_m \cB^m$, $\tau(\cB, \cD'_{L^1}) = \kappa(\cB, \cD'_{L^1}) \subsetneq \beta (\cB, \cD'_{L^1})$, $\cO_M = \varprojlim_m \cO_C^m$ and $\tau(\cO_M, \cO'_M) = \kappa(\cO_M, \cO'_M) = \beta(\cO_M, \cO'_M)$.

\textbf{Notation.} We denote by $\bN_0$ and $\bN$ the sets of natural numbers including $0$ or not. $\alpha \in \bN_0^n$ are multiindices and $\alpha + \ell = (\alpha_1 + 1, \alpha_2 + 1, \dotsc, \alpha_n + 1)$. We use "seminorms" always in the sense ``seminorms generating a topology''. Sometimes we write ``compact'' instead of ``relatively compact''.

Generally, we use for spaces the notation in \cite{Sch5}, \cite{Sch1} and \cite{H}. $\dot\cB'$ is not the dual of $\dot\cB$ but the space of distributions vanishing at infinity defined as the closure of $\cE'$ in $\cD'_{L^\infty}$. We have $\dot\cB^0 = \cC_0 = \cS_0^0$, $\cB^0 = \mathscr{BC}$, wherein $\cS^m_k$ is defined in \cite[p.~90]{H}. The ``argument'' of the spaces is $\bR^n$ and generally it is omitted, i.e.,
\[ \dot\cB^0 = \cC_0 = \cC_0(\bR^n),\quad \cS^m_k = \cS_k^m(\bR^n). \]
Only in Section \ref{sec6} the ``argument'' is an open subset of $\bR^n$. The subscript $\Box_+$ means ``positive functions''. $\abso{x}$ is the Euclidean norm of $x \in \bR^n$, i.e., $\abso{x} = (x_1^2 + \dotsc + x_n^2)^{1/2}$ if $x = (x_1, \dotsc, x_n)$. $\check \varphi$ denotes the function $x \mapsto \varphi(-x)$. The spaces of functions and measures with compact support $L^p_{\textrm{comp}}$, $\cM_{\textrm{comp}}$ are defined by
\[ L^p_{\textrm{comp}} = L^p \cap \cE',\quad \cM_{\textrm{comp}} = \cM \cap \cE'. \]
$\cM^1$ is the space of integrable measures (often called ``bounded measures'') which can be defined by $\cM^1 = (\cC_0)'$. Furthermore, we use the abbreviations $(L^p)_k = (1 + \abso{x}^2)^{-k} L^p$, $k \in \bZ$, $(L^p)_\infty = \bigcap_{k=0}^\infty (L^p)_k, (L^p)_{-\infty} = \bigcup_{k=0}^\infty (L^p)_{-k}$, the $L^p$-norms $\norm{f}_p = (\int_\Omega \abso{f(x)}^p \,\ud x)^{1/p}$ and the unit balls $B_{1,p} = \{ f \in L^p: \norm{f}_p \le 1 \}$, $B_{1,1} = \{ f \in L^1: \norm{f}_1 \le 1\}$ or $B_{1,1} = \{ \mu \in \cM^1: \norm{f}_1 \le 1 \}$.

We use the notation of $B_R$ for the closed Euclidean ball of radius $R$ centred in the origin.

For the space of linear and continuous mappings from a locally convex Hausdorff space $E$ into an other such space $F$ we adopt the notation $\cL(E,F)$. The subscripts ``s'', ``b'' and ``c'' in $\cL_s(E,F)$, $\cL_b(E,F)$ and $\cL_c(E,F)$, respectively, denote the weak, the strong topology and the $\kappa$-topology, in contrast to the designation $\sigma(E,F)$, $\beta(E,F)$ and $\kappa(E,F)$.

As in \cite{BNO} we use the Bessel kernels
\[ L_\lambda = \cF^{-1} ( (1 + \abso{x}^2)^{-\lambda/2}),\quad \lambda \in \bC. \]

\section{Bounded subsets of $\cS$ and seminorms in $\cS'$}\label{sec2}

First, we state a lemma which has its origin in the proofs of~\cite[Lemma 3.6, p.~127]{Chevalley}, \cite[Chap.~II, Lemme 18, pp.~132--133]{G}, and in a statement in~\cite[p.~330]{Roider} and~\cite[(3.2) Lemma]{D}.

\begin{lemma}\label{lemma1}
  Let $F$ be one of the Banach spaces $L^p$, $\cC_0$, $\cM^1$, $1 \le p \le \infty$, and let $F_\infty$ be the Fr\'echet space
  \[ F_\infty = \bigcap_{k=0}^\infty (1+\abso{x}^2)^{-k} F = \varprojlim_k (1+\abso{x}^2)^{-k} F \]
  and $B \subset F_\infty$. 
  Then we have for $F=L^p$:
  \[
    B \text{ bounded in } F_\infty \Longleftrightarrow (\exists \varphi \in \cS_+: B \subset \varphi B_{1,p}).
  \]
  If $F = \cC_0$ or $\cM^1$ the characterisation of bounded subsets of $(\cC_0)_\infty = (\dot \cB^\circ)_\infty = \cS^0_\infty$ and of $(\cM^1)_\infty = (\cD^{\prime 0}_{L^1})_\infty = \cS^{\prime 0}_\infty$ has the same form if we set $B_{1,\infty} = \{ \varphi \in \cC_0\ :\ \norm{\varphi}_{\infty} \le 1 \}$ and $B_{1,1} = \{ \mu \in \cM^1\ :\ \norm{\mu}_1 \le 1\}$, respectively.
\end{lemma}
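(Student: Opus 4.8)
The plan is to prove the two implications separately and to treat the three cases $F=L^p$, $F=\cC_0$, $F=\cM^1$ uniformly by decomposing $\bR^n$ into the annuli $A_j \coleq \{x : j \le \abso{x} < j+1\}$, $j \in \bN_0$. The implication ``$\Leftarrow$'' is the routine one. If $B \subset \varphi B_{1,p}$ with $\varphi \in \cS_+$, then each $f \in B$ has the form $f = \varphi g$ with $g$ in the relevant unit ball (for $\cM^1$ one writes $\ud\mu = \varphi\,\ud\nu$ with $\norm{\nu}_1 \le 1$). Since $(1+\abso{x}^2)^k \varphi \in \cS \subset L^\infty$ for every $k$, the estimate $\norm{(1+\abso{x}^2)^k f}_F \le \norm{(1+\abso{x}^2)^k \varphi}_\infty$ follows from Hölder's inequality, so $B$ is bounded in every seminorm of $F_\infty$, hence in $F_\infty$.

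For ``$\Rightarrow$'' assume $B$ is bounded in $F_\infty$, i.e. $C_k \coleq \sup_{f \in B} \norm{(1+\abso{x}^2)^k f}_F < \infty$ for every $k$. I would introduce the annular content $a_j \coleq \sup_{f \in B} \norm{f}_{F(A_j)}$, where $\norm{\cdot}_{F(A_j)}$ denotes $(\int_{A_j} \abso{f}^p)^{1/p}$, $\sup_{A_j} \abso{f}$ or $\abso{\mu}(A_j)$ according to the case. Restricting the defining bound to $A_j$ and using $(1+\abso{x}^2)^k \ge (1+j^2)^k$ there gives $(1+j^2)^k a_j \le C_k$ for all $j,k$; thus $(a_j)_j$ decays faster than every power of $j$. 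The aim is then to construct a single radial $\varphi \in \cS_+$, decreasing in $\abso{x}$, with $\varphi \ge b_j$ on $A_j$ for a suitable sequence $(b_j)$, since this forces $f/\varphi$ (respectively $\varphi^{-1}\mu$) into the corresponding unit ball: for $F=L^p$ one gets $\norm{f/\varphi}_p^p \le \sum_j b_j^{-p} a_j^p$, and analogously with supremum or total variation for $\cC_0$ and $\cM^1$.

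The crux is to choose $(b_j)$ so that simultaneously the resulting bound is finite (and, after rescaling $\varphi$ by a constant, at most $1$) and $(b_j)$ still decays faster than every power, so that it admits a Schwartz majorant. Here I would use the square-root device $b_j \coleq \max(a_j^{1/2}, 2^{-j})$, the same choice in all three cases. On the one hand $b_j$ is the maximum of two super-polynomially decaying positive sequences, hence itself super-polynomially decaying; on the other hand, where the first term dominates one has $a_j/b_j = a_j^{1/2}$, and where the floor dominates (so $a_j < 2^{-2j}$) one has $a_j/b_j = 2^j a_j < 2^{-j}$, so in either case $a_j/b_j \to 0$ super-polynomially. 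Consequently $\sum_j (a_j/b_j)^p < \infty$ (and $\sum_j a_j/b_j < \infty$ for $\cM^1$), while for $\cC_0$ the relation $a_j/b_j \to 0$ is exactly what guarantees $f/\varphi \to 0$ at infinity, i.e. $f/\varphi \in B_{1,\infty} \subset \cC_0$. The floor $2^{-j}$ also keeps $\varphi$ strictly positive, so that the divisions above are legitimate.

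Finally, producing $\varphi$ from $(b_j)$ is standard: a positive sequence decaying faster than every power is majorised on the annuli $A_j$ by a radial $\varphi \in \cS_+$ obtained from a smooth, decreasing, rapidly decreasing profile $\psi(\abso{x})$ lying above the decreasing step majorant $j \mapsto \sup_{i \ge j} b_i$. I expect the only genuine difficulty to be the balancing act of the previous paragraph --- extracting from the bare super-polynomial decay of $(a_j)$ a weight that is at once rapidly decreasing and large enough to renormalise all of $B$ into one fixed unit ball --- and the square-root trick is precisely what reconciles these two demands. The cases $\cC_0$ and $\cM^1$ run in complete parallel to $L^p$, replacing the integral over $A_j$ by the supremum over $A_j$ and by the total variation $\abso{\mu}(A_j)$, respectively.
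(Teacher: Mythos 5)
Your proof is correct, and although it follows the same master plan as the paper's proof --- localise the weighted norms, extract a rapidly decreasing sequence of local contents, then build one Schwartz weight that dominates all of $B$ with summable renormalisation ratios --- the technical devices are genuinely different at every step. The paper partitions $\bR^n$ into the unit lattice cubes $\tau_N Q$, $N \in \bZ^n$, rather than annuli, so its local content $a(N)$ is a rapidly decreasing function on $\bZ^n$; instead of your square-root-plus-floor sequence $b_j = \max(a_j^{1/2}, 2^{-j})$, it boosts the contents by a polynomial factor and takes a Gaussian series, $\varphi(x) = c\sum_{N} a(N)(1+\abso{N}^2)^{n/p} e^{-\abso{N-x}^2}$. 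The factor $(1+\abso{N}^2)^{n/p}$ plays exactly the role of your square-root trick: it makes the cube-wise contribution to $\norm{f/\varphi}_p^p$ at most a constant times $(1+\abso{N}^2)^{-n}$, summable over $\bZ^n$ independently of $a(N)$. Finally, the paper certifies $\varphi \in \cS$ by interpreting the Gaussian series via the vector-valued scalar product $s \times s'(\cS) \to \cS$ (citing Schwartz), whereas you majorise the decreasing envelope of $(b_j)$ by an elementary smooth radial profile. Your route buys three things: it avoids the $s'\,\widehat\otimes\,\cS$ machinery entirely; the floor $2^{-j}$ gives strict positivity of $\varphi$ for free (in the paper this is only implicit in the Gaussian smoothing); and you treat $L^p$ with $1 \le p \le \infty$, $\cC_0$ and $\cM^1$ uniformly, whereas the paper writes out only $F = L^p$, $p < \infty$, and the vanishing-at-infinity argument you give for $\cC_0$ (namely $a_j/b_j \to 0$) is precisely the point that needs a separate remark there. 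What the paper's route buys is that its weight is given by a single explicit formula, with the membership $\varphi \in \cS$ delegated to a citation, whereas in your write-up the construction of the radial Schwartz majorant is the one step waved away as ``standard'': to close it you should exhibit, e.g., the sum $\sum_j \tilde b_j\, \eta(\abso{x}-j)$ with $\tilde b_j = \sup_{i \ge j} b_i$ and $\eta$ a fixed bump equal to $1$ on $[0,1]$ and supported in $[-1,2]$ (handling smoothness at the origin by replacing the $j=0$ term with a bump that is constant near $0$), and note that all derivatives inherit the rapid decay of $(\tilde b_j)$.
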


\begin{proof}
  We only give a proof for $F = L^p$, $1 \le p < \infty$.

  ``$\Leftarrow$'' follows from the continuity of the multiplication
  \[ \cS \times L^p \to L^p_\infty, \quad (\varphi, f) \mapsto \varphi \cdot f. \]
  ``$\Rightarrow$'': The existence of $\varphi \in \cS_+$ has to be shown. We modify B. Roider's proof \cite[p.~331]{Roider} which in turn is a modification of A.~Grothendieck's proof \cite[Chap. II, p.~132--133]{G}.

  Let $Q = [-1/2, 1/2]^n$ be the cube in $\bR^n$ centred at $0$ and $\chi_Q$ its characteristic function, i.e.,
  \[
    \chi_Q(x) = \left\{ \begin{aligned} &1 &\quad&\textrm{if }x \in Q, \\&0&\quad &\textrm{if }x \not\in Q.
      \end{aligned}\right.
  \]
  If $B$ is bounded in $(L^p)_\infty$ we define the sequence $(a(N))_{N \in \bZ^n}$ by putting
  \[ a(N) = ( \sup_{f \in B} ( \abso{f}^p * \chi_Q)(N))^{1/p}. \]
  The boundedness of $B$ implies the existence of constants $c_k$ such that
  \[ \sup_{f \in B} \norm{(1 + \abso{x}^2)^k f}_p \le c_k, \]
  and, more explicitly,
  \[ \sup_{f \in B} \int_{\bR^n} (1+\abso{x}^2)^{kp} \abso{f(x)}^p \,\ud x \le c_k^p. \]
  We show that the sequence $(a(N))_{N \in \bZ^n}$ is rapidly decreasing by estimating the last inequality from below: if $f \in B$, $N = (N_j) \in \bZ^n$, $\tau_N Q = \{ x \in \bR^n: \abso{x_j - N_j} \le 1/2, j = 1 \dotsc, n \}$ we have for $k\in\mathbb{N}_0$ that
  \[
    \int_{\bR^n} (1 + \abso{x}^2)^{kp} \abso{f(x)}^p \,\ud x \ge \int_{\tau_N Q} (1 + \abso{x}^2)^{kp} \abso{f(x)}^p \,\ud x.
  \]
  Moreover, due to $1+|x|^2 \ge \frac{1}{2n}(1+|N|^2)$ for $x \in \tau_N Q$ we obtain
  \[
    \int_{\bR^n} (1 + \abso{x}^2)^{kp} \abso{f(x)}^p \,\ud x \ge \left( \frac{1+\abso{N}^2}{2n}\right)^{kp} \int_{\tau_N Q} \abso{f(x)}^p \,\ud x
  \]
  and
  \[ a(N)(1+\abso{N}^2)^k \le (2n)^k c_k \quad \forall N, \]
  i.e.,
  \[ (a(N))_{N \in \bZ^n} \in s(\bZ^n). \]
  The function
  \[ e^{-\abso{N-x}^2} \colon \bZ^n \times \bR^n \to \bR,\quad
    (N,x) \mapsto e^{-\abso{N-x}^2} \]
  fulfils
  \[ e^{-\abso{N-x}^2} \in s_N' \widehat\otimes \cS_x = s_N'(\cS_x) \]
  due to
  \[ e^{-\abso{x-y}^2} \in \cS_y' \widehat\otimes \cS_x. \]
  Therefore, we obtain
  \[ \sum_{N \in \bZ^n} a(N) e^{-\abso{N-x}^2} \in \cS_x \]
  as a consequence of the well-definedness of the vector-valued scalar product
  \[ \langle\ , \ \rangle \colon s \times s'(\cS) \to \cS \]
  (cf.~\cite[Prop.~4, p.~41]{Sch4a} and \cite[Thm.~7.1, p.~31]{Tata}).
  Defining the function $\varphi \in \cS_+$ by
  \[ \varphi(x) = c \sum_{N \in \bZ^n} a(N) (1+\abso{N}^2)^{n/p} e^{-\abso{N-x}^2}, \]
  $c$ a suitable constant, we prove the desired inclusion
  \[ B \subset \varphi B_{1,p} \]
  by estimating $\norm{f/\varphi}_p$ for $f \in B$:
  \begin{align*}
    \norm{\frac{f}{\varphi}}^p_p &= \int_{\bR^n} \abso{\frac{f}{\varphi}}^p\,\ud x = \sum_{N \in \bZ^n} \int_{\tau_N Q} \abso{\frac{f}{\varphi}}^p \,\ud x \le \sum_{N \in \bZ^n} \frac{1}{\inf_{x \in \tau_N Q} \varphi(x)^p} \underbrace{\int_{\tau_N Q} \abso{f}^p \,\ud x}_{=(\abso{f}^p * \chi_Q)(N) \le a(N)^p}.
  \end{align*}
  Due to $\varphi(x) \ge c a(N)(1+\abso{N}^2)^{n/p} e^{-\abso{N-x}^2}$ and $\varphi(x)^p \ge c^p e^{-p/4} (1+\abso{N}^2)^n a(N)^p$ if $x \in \tau_N Q$ by setting
  \[
    c = e^{1/4} \left( \sum_{N \in \bZ^n} (1+\abso{N}^2)^{-n}\right)^{1/p}
  \]
  we obtain
  \[
    \norm{\frac{f}{\varphi}}^p_p \le \frac{e^{p/4}}{c^p} \sum_{N \in Z^n} (1 + \abso{N}^2)^{-n} \le 1
  \]
  which finishes the proof.
\end{proof}

\begin{remark}
  Another proof of Lemma~\ref{lemma1} can be given by a modification of the proof in~\cite{Wawak}.
\end{remark}

Bounded subsets of $\cS$ are characterised in~\cite[p.~235]{Sch1}. The following characterisation seems to be new.

\begin{proposition}\label{prop1}
  Let $1 \le p \le \infty$.
  \begin{enumerate}[label=(\roman*)]
  \item\label{prop1uno} Let $B \subset \cS$. Then $B$ is bounded in $\cS$ if and only if $\exists \varphi, \psi \in \cS$: $B \subset \varphi * (\psi B_{1,p}))$.
  \item\label{prop1due} The seminorms $p_{\varphi,\psi}$, $\varphi, \psi \in \cS$, defined by
    \[ p_{\varphi, \psi}(S) = \norm{\psi(\varphi * S)}_p,\quad S \in \cS', \]
    generate the topology of $\cS'$.
  \end{enumerate}
\end{proposition}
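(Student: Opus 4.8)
The plan is to prove \ref{prop1uno} first and then obtain \ref{prop1due} from it by a duality computation, using that $\cS'$ carries the strong topology $\beta(\cS',\cS)$, which by definition is generated by the seminorms $p_B$ with $B\subset\cS$ bounded. Within \ref{prop1uno} the implication ``$\Leftarrow$'' is the routine direction: I would check that the bilinear map $\cS\times L^p\to (L^p)_\infty$, $(\psi,h)\mapsto\psi h$, is continuous (for $(1+\abso{x}^2)^k\psi\in L^\infty$ and $h\in L^p$ one has $\psi h\in (L^p)_\infty$ with the evident norm estimates), so that $\psi B_{1,p}$ is bounded in $(L^p)_\infty$, and that convolution $\cS\times (L^p)_\infty\to\cS$, $(\varphi,g)\mapsto\varphi*g$, is continuous as well. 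The latter follows by writing
\[ x^\alpha\pd^\beta(\varphi*g)=\sum_{\alpha'+\alpha''=\alpha}\binom{\alpha}{\alpha'}\bigl((\cdot)^{\alpha'}\pd^\beta\varphi\bigr)*\bigl((\cdot)^{\alpha''}g\bigr) \]
and applying Hölder's inequality, since $(\cdot)^{\alpha'}\pd^\beta\varphi\in\cS\subset L^{p'}$ and $(\cdot)^{\alpha''}g\in L^p$. Hence $\varphi*(\psi B_{1,p})$ is bounded in $\cS$, and so is any of its subsets.

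For the converse ``$\Rightarrow$'' of \ref{prop1uno} I would pass to the Fourier transform, which is an isomorphism of $\cS$ and turns the convolution factor into a multiplication factor: $\cF\bigl(\varphi*(\psi h)\bigr)=\hat\varphi\cdot\cF(\psi h)$. Since $B$ bounded in $\cS$ implies $\cF B$ bounded in $\cS$, hence in $(L^p)_\infty$, Lemma~\ref{lemma1} yields $\varphi_0\in\cS_+$ with $\cF B\subset\varphi_0 B_{1,p}$, i.e.\ $\hat f=\varphi_0 k_f$ with $\norm{k_f}_p\le 1$ for every $f\in B$. Setting $\varphi=\cF^{-1}\varphi_0\in\cS$ and $g_f=\cF^{-1}k_f$ gives $f=\varphi*g_f$.

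The main obstacle is now visible: controlling $\norm{k_f}_p$ only bounds $g_f=\cF^{-1}k_f$ in an $L^{p'}$-sense and says nothing about its decay, so this single deconvolution does \emph{not} place $g_f$ in a fixed bounded subset $\psi B_{1,p}$ of $(L^p)_\infty$. The decay of $g_f$ is the smoothness of $k_f=\hat f/\varphi_0$, which Lemma~\ref{lemma1} does not furnish; one application of the lemma controls decay and smoothness separately but never simultaneously. I therefore expect the real argument to produce both factors at once by a direct construction, adapting the Gaussian-grid technique from the proof of Lemma~\ref{lemma1}: one builds $\varphi$ and $\psi$ together from the numbers $\sup_{f\in B}(\abso{\pd^\beta f}^p*\chi_Q)(N)$, so that a single convolver/multiplier dominates $f$ \emph{and} all its derivatives uniformly over $B$. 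The rapid decrease of these grid sequences, jointly in the lattice index $N$ and in the derivative order $\beta$, is exactly what boundedness of $B$ in $\cS$ provides, and this is where the quantitative work concentrates.

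Finally I would deduce \ref{prop1due} from \ref{prop1uno} by the adjoint computation. For $S\in\cS'$ one has $\varphi*S\in\cO_M$ and hence $\psi(\varphi*S)\in\cS\subset L^p$, so $p_{\varphi,\psi}$ is well defined. Using $\langle S,\varphi*g\rangle=\langle\check\varphi*S,g\rangle$ with $g=\psi h$ gives $\langle S,\varphi*(\psi h)\rangle=\langle\psi(\check\varphi*S),h\rangle$, whence $\sup_{\norm{h}_p\le 1}\abso{\langle S,\varphi*(\psi h)\rangle}=\norm{\psi(\check\varphi*S)}_{p'}$. Thus, on the one hand, for a bounded $B\subset\varphi*(\psi B_{1,p})$ provided by \ref{prop1uno} one obtains $p_B(S)\le\norm{\psi(\check\varphi*S)}_{p'}$; on the other hand each $\norm{\psi(\varphi*S)}_p$ equals $p_{B'}(S)$ for the set $B'=\check\varphi*(\psi B_{1,p'})$, which is bounded in $\cS$ by ``$\Leftarrow$''. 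Hence the families $\{p_{\varphi,\psi}\}$ and $\{p_B:B\subset\cS\text{ bounded}\}$ are equivalent---the switch of exponent from $p$ to $p'$ being immaterial since both statements range over all $1\le p\le\infty$---and therefore generate the strong topology $\beta(\cS',\cS)$ of $\cS'$.
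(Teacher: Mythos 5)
Your treatment of part \ref{prop1uno}, direction ``$\Leftarrow$'', and your deduction of \ref{prop1due} from \ref{prop1uno} are correct and essentially identical to the paper's own argument (the paper likewise covers a bounded $B$ by $\varphi*(\psi B_{1,q})$ with the conjugate exponent and uses the same adjoint identity $\langle \varphi*(\psi f),S\rangle=\langle f,\psi(\check\varphi*S)\rangle$). The problem is the hard direction ``$\Rightarrow$'' of \ref{prop1uno}, which you do not prove. As you yourself observe, applying Lemma~\ref{lemma1} to $\cF B$ controls only the decay of $\hat f$ and says nothing about the quotient $k_f=\hat f/\varphi_0$, and your fallback --- ``adapt the Gaussian-grid technique so as to produce both factors at once'' --- is an expectation, not an argument. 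It is also not a routine adaptation: the grid construction in Lemma~\ref{lemma1} solves a pointwise \emph{domination} problem, whereas $B\subset\varphi*(\psi B_{1,p})$ is a \emph{division} problem (the inverse Fourier transform of $\hat f/\hat\varphi$ must lie in $\psi B_{1,p}$, so decay and smoothness have to be controlled simultaneously, uniformly over $B$), and no choice of the grid data $\sup_{f\in B}(\abso{\pd^\beta f}^p*\chi_Q)(N)$ obviously produces such a convolution factorization.

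The missing ingredient, which is the actual content of the paper's proof, is a convolution factorization theorem for bounded subsets of $\cS$: by Lemma~1 of Miyazaki \cite[p.~529]{M}, every bounded $B\subset\cS$ can be written as $B=\varphi*D$ with $\varphi\in\cS$ and $D\subset\cS$ bounded; then Lemma~\ref{lemma1}, applied to $D$ (which is bounded in $L^p_\infty$), gives $D\subset\psi B_{1,p}$, and both factors appear. Such factorizations are obtained by parametrix-type arguments --- compare the paper's proof of Proposition~\ref{prop3}, \ref{prop3due}, where Lemme~2.5 of \cite{DM1978} yields an identity of the form $\sum_{m}(-1)^m\alpha_m\delta^{(2m)}*g=\delta+h$ and hence a representation $\psi=g*\chi-h*\psi$ --- and this mechanism is genuinely different from the Gaussian-grid estimate you propose to recycle. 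Without this (or an equivalent) ingredient, your proof of \ref{prop1uno}, and hence of the ``finer'' half of \ref{prop1due}, is incomplete.
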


\begin{proof}
  \ref{prop1uno} The implication ``$\Leftarrow$'' is a consequence of the composition of the two continuous mappings
  \[
    L^p \times \cS \xrightarrow{\cdot} L^p_\infty, \quad L^p_\infty \times \cS \xrightarrow{*} \cS.
  \]

  ``$\Rightarrow$'': \cite[Lemma 1, p.~529]{M} implies the existence of a bounded set $D$ in $\cS$ and a function $\varphi \in \cS$ such that $B = \varphi * D$. Because $D$ is also bounded in $L^p_\infty$, Lemma~\ref{lemma1} yields $\psi \in \cS$ such that $D \subset \psi B_{1,p}$.

  \ref{prop1due} The topology generated by the seminorms $p_{\varphi, \psi}$ is coarser than $\beta(\cS', \cS)$ because the mappings $\cS' \to L^p$, $S \mapsto \psi(\varphi *S)$ are continuous. Conversely, if $B \subset \cS$ is bounded then we have $B \subset \varphi * (\psi B_{1,q})$, $1/q + 1/p = 1$. Thus,
  \begin{align*}
    p_B(S) &= \sup_{\chi \in B} \abso{\langle \chi, S \rangle} \le \sup_{\norm{f}_q \le 1} \abso{\langle \varphi * (\psi f ), S \rangle} = \sup_{\norm{f}_q \le 1} \abso{\langle f, \psi (\check \varphi* S)\rangle} = \norm{\psi  (\check \varphi * S)}_p,
  \end{align*}
  i.e.,
  \[ (\cS', \{ p_{\varphi, \psi} \}) \xrightarrow{\Id} (\cS', \beta(\cS', \cS)) \]
  is continuous.
\end{proof}

\section{Bounded sets in $\cO_C$ and $\cO_M'$. Seminorms in $\cO_C'$ and $\cO_M$.}\label{sec3}

\begin{proposition}\label{prop2}
  \begin{enumerate}[label=(\roman*)]
  \item\label{prop2uno} Let $B \subset \cO_C$. Then
    \[ B \textrm{ bounded in }\cO_C \Longleftrightarrow \exists \alpha \in \bN_0^n\ \exists \varphi \in \cS: B \subset x^\alpha ( \varphi * B_{1,2} ). \]
  \item\label{prop2due} Let $B \subset \cO_M'$. Then
    \[ B \textrm{ bounded in }\cO'_M \Longleftrightarrow \exists \alpha \in \bN_0^n\ \exists \varphi \in \cS: B \subset \pd^\alpha ( \varphi B_{1,2}). \]
  \item\label{prop2tre} The seminorms $p_{\alpha, \varphi}$, $\alpha \in \bN_0^n$, $\varphi \in \cS$, defined by
    \[ \cO_C' \ni S \mapsto p_{\alpha, \varphi}(S) = \norm{\varphi * (x^\alpha S)}_2, \]
    generate the topology of $\cO_C'$.
  \item\label{prop2quattro} The seminorms $q_{\alpha, \varphi}$, $\alpha \in \bN_0^n$, $\varphi \in \cS$, defined by
    \[ \cO_M \ni f \mapsto q_{\alpha, \varphi}(f) = \norm{\varphi \pd^\alpha f}_2, \]
    generate the topology of $\cO_M$.
  \end{enumerate}
\end{proposition}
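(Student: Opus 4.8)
The plan is to exploit the Fourier transform $\cF$, which is a topological isomorphism $\cO_M'\to\cO_C$ and $\cO_M\to\cO_C'$, in order to reduce the four assertions to two. Since $\cF$ turns multiplication by $x^\alpha$ into a constant multiple of $\pd^\alpha$, convolution with $\varphi$ into multiplication by $\hat\varphi$, and---by Plancherel---fixes the ball $B_{1,2}$, it carries a set of the form $x^\alpha(\varphi*B_{1,2})$ onto one of the form $\pd^\alpha(\hat\varphi B_{1,2})$, and the seminorm $q_{\alpha,\varphi}$ onto a constant multiple of $p_{\alpha,\hat\varphi}$. Hence \ref{prop2uno} and \ref{prop2due} are equivalent under $\cF$, and so are \ref{prop2tre} and \ref{prop2quattro}. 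I would therefore prove \ref{prop2due} and \ref{prop2tre} and obtain \ref{prop2uno} and \ref{prop2quattro} by transport along $\cF$.

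For \ref{prop2due}, the implication ``$\Leftarrow$'' follows, as in Proposition~\ref{prop1}, from the continuity of the map $L^2\to\cO_M'$, $f\mapsto\pd^\alpha(\varphi f)$, so that the image $\pd^\alpha(\varphi B_{1,2})$ of the unit ball is bounded. For ``$\Rightarrow$'' I would use the representation of $\cO_M'$ as the regular, countable inductive limit $\varinjlim_\alpha\pd^\alpha(L^2)_\infty$ of the Fréchet spaces $\pd^\alpha(L^2)_\infty$ (the $L^2$-analogue of the representation recalled in the introduction). Regularity yields an $\alpha$ and a set $B_0$, bounded in $(L^2)_\infty$, with $B\subset\pd^\alpha B_0$; applying Lemma~\ref{lemma1} with $F=L^2$ then produces $\varphi\in\cS_+$ such that $B_0\subset\varphi B_{1,2}$, whence $B\subset\pd^\alpha(\varphi B_{1,2})$. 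Transport along $\cF$ delivers \ref{prop2uno}.

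For \ref{prop2tre} I would argue exactly as in Proposition~\ref{prop1}\ref{prop1due}, the topology of $\cO_C'$ being $\beta(\cO_C',\cO_C)$ by definition. That the $p_{\alpha,\varphi}$-topology is coarser than $\beta(\cO_C',\cO_C)$ follows from the continuity of the maps $\cO_C'\to L^2$, $S\mapsto\varphi*(x^\alpha S)$. For the converse, given a bounded $B\subset\cO_C$, part~\ref{prop2uno} gives $B\subset x^\alpha(\varphi*B_{1,2})$, and since $p=2$ is its own conjugate exponent the duality computation
\[ p_B(S)=\sup_{\chi\in B}\abso{\langle\chi,S\rangle}\le\sup_{\norm{f}_2\le1}\abso{\langle x^\alpha(\varphi*f),S\rangle}=\sup_{\norm{f}_2\le1}\abso{\langle f,\check\varphi*(x^\alpha S)\rangle}=\norm{\check\varphi*(x^\alpha S)}_2=p_{\alpha,\check\varphi}(S) \]
shows that $\beta(\cO_C',\cO_C)$ is in turn coarser than the $p_{\alpha,\varphi}$-topology; the two therefore coincide. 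Transporting \ref{prop2tre} along $\cF$ yields \ref{prop2quattro}, since $\cF$ is a topological isomorphism $\cO_M\to\cO_C'$ carrying $q_{\alpha,\varphi}$ to a multiple of $p_{\alpha,\hat\varphi}$.

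The main obstacle is the ``$\Rightarrow$'' direction of \ref{prop2due}: one must justify that $\cO_M'$ really is the regular inductive limit $\varinjlim_\alpha\pd^\alpha(L^2)_\infty$---that is, that passing from the $L^1$-weight of the introduction to the Hilbertian $L^2$-weight changes neither the space nor its bounded sets---and that regularity confines the bounded set $B$ to a single step. Once a single step $\pd^\alpha(L^2)_\infty$ is reached, Lemma~\ref{lemma1} does the remaining work, and the self-duality of $L^2$ makes the passage to the seminorms in \ref{prop2tre} as transparent as in Proposition~\ref{prop1}. The continuity statements used in the ``easy'' directions are routine compositions of the continuous multiplication and convolution maps already employed there.
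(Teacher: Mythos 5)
Your proposal is correct and follows essentially the paper's own route: the paper's proof of Proposition~\ref{prop2} consists of a pointer to the introduction, where exactly your ingredients appear --- the regular, countable inductive-limit representation of $\cO_M'$ combined with Lemma~\ref{lemma1} for the bounded-set characterisation, the duality computation as in Proposition~\ref{prop1}~\ref{prop1due} for the seminorms, and the Fourier exchange isomorphism to transfer \ref{prop2due} to \ref{prop2uno} --- with the introduction working in the $L^1$/$L^\infty$ setting and the passage to $L^2$-norms relegated to the remark on ``simple manipulations''. The only differences are bookkeeping: you work in $L^2$ throughout (honestly flagging the $L^2$-version of the inductive-limit representation as the point needing justification, which the paper glosses over just as much) and you route \ref{prop2tre} through \ref{prop2uno} by duality, whereas the paper routes \ref{prop2quattro} through \ref{prop2due} and obtains the remaining statements by the Fourier transformation.
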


\begin{proof}
  \ref{prop2tre}, \ref{prop2quattro} and \ref{prop2due} are treated in the introduction, \ref{prop2uno} is a consequence of \ref{prop2due} by applying the Fourier transformation.
\end{proof}

\begin{remark}
  \begin{enumerate}[label=(\arabic*)]
  \item The seminorms in (iii) are presented also in \cite[4.2 Lemma]{SLT}.
  \item By simple manipulations all $L^2$-norms can be replaced by $L^p$-norms, $1 \le p \le \infty$.
  \item In \cite[Lemma 2, p.~530]{M}, bounded sets $B$ in $\cO_C$ are characterised by the existence of a bounded set $D \subseteq \cS'$ and a function $\varphi \in \cS$ such that $B = \varphi * D$. This characterisation immediately yields that the seminorms
    \[
      S \mapsto \norm{(1 + \abso{x}^2)^k (\varphi * S)}_\infty, \qquad S \in \cO_C', \;\varphi \in \cS, \; k \in \bN_0
    \]
    generate the topology of $\cO_{C}'$.
  \item The problem of finding seminorms in terms of classical norms and convolutions and multiplications with functions which generate the topology of $\cO_C$ remains open (cf.~\cite[p.~442]{H}).
  \end{enumerate}
\end{remark}

\section{Bounded and compact sets, seminorms in the spaces $\cD_{L^{\lowercase{p}}}$, $\cD_{L^{\lowercase{p}}, c}$, $\dot\cB$, $\dot\cB_c$, $\cD'_{L^{\lowercase{q}}}$, $\cD'_{L^{\lowercase{q}}, c}$, $\dot\cB'$, $\dot\cB'_c$}\label{sec4}

As in \cite[p.~5]{Sch4} the index $c$ in $\cD_{L^p,c}$, $\dot\cB_c$, $\cD'_{L^q, c}$, $\dot\cB'_c$ refers to the topology of uniform convergence on absolutely convex, compact subsets of $\cD'_{L^q}$, $\cD'_{L^1}$, $\cD_{L^p}$, $\cD_{L^1}$, respectively, $1<p<\infty$, $\frac{1}{p}+\frac{1}{q}=1$.

\begin{proposition}\label{prop3}
  Let $1 \le p < \infty$ and $\frac{1}{p} + \frac{1}{q} = 1$.
  \begin{enumerate}[label=(\roman*)]
  \item\label{prop3uno} The bounded subsets of $\cD_{L^p}$ and $\cD_{L^p, c} = (\cD_{L^p}, \kappa(\cD_{L^p}, \cD'_{L^q}))$ coincide with the relatively compact subsets of $\cD_{L^p, c}$.
  \item\label{prop3due} They are characterised by:
    \[
      B \subset \cD_{L^p} \textrm{ bounded } \Longleftrightarrow \exists m \in \bN\ \exists \varphi_1,\ldots,\varphi_m \in \cD: B \subset \sum_{i=1}^{m} \varphi_i * B_{1,p}
    \]
  \item\label{prop3tre} The seminorms $p_\varphi$, $\varphi \in \cD$, defined by
    \[
      \cD'_{L^q} \ni S \mapsto p_\varphi(S) = \norm{\varphi * S}_q,
    \]
    generate the topology of $\cD'_{L^q}$, i.e., $\beta(\cD'_{L^q}, \cD_{L^p})$.
  \item \label{prop3quatro} The seminorms $p_\varphi$, $\varphi \in \cS$, defined by
    \[
      \cD'_{L^q} \ni S \mapsto p_\varphi(S) = \norm{\varphi * S}_q,
    \]
    generate the topology of $\cD'_{L^q}$, i.e., $\beta(\cD'_{L^q}, \cD_{L^p})$.
  \end{enumerate}
\end{proposition}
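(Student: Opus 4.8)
The plan is to make part~\ref{prop3due} the engine of the whole proposition: once the bounded sets of $\cD_{L^p}$ are written through convolution, \ref{prop3tre} and \ref{prop3quatro} follow from a short duality computation, and \ref{prop3uno} from a soft compactness argument.

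For \ref{prop3due} the implication ``$\Leftarrow$'' is immediate: by Young's inequality $\norm{\pd^\alpha(\varphi*f)}_p=\norm{(\pd^\alpha\varphi)*f}_p\le\norm{\pd^\alpha\varphi}_1\norm f_p$, so $\varphi*B_{1,p}$, and hence any finite sum $\sum_{i=1}^m\varphi_i*B_{1,p}$, is bounded in $\cD_{L^p}$. For ``$\Rightarrow$'' I would argue by a constant–coefficient parametrix in the spirit of the proof of \cref{lemma1} and of the references used for $\cS$ and $\cO_C$. Choosing $N$ large, the Bessel kernel $L_{2N}$ satisfies $(1-\Delta)^N L_{2N}=\delta$, and for a cut-off $\chi\in\cD$ with $\chi\equiv1$ near the origin one obtains $(1-\Delta)^N(\chi L_{2N})=\delta+\omega$ with $\omega\in\cD$; thus every $f\in\cD_{L^p}$ satisfies
\[ f=(\chi L_{2N})*(1-\Delta)^N f-\omega*f, \]
where $\norm{(1-\Delta)^N f}_p$ and $\norm f_p$ are uniformly bounded on $B$. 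The term $\omega*f$ is already admissible. The main obstacle is the first term: the compactly supported kernel $\chi L_{2N}$ lies in $L^1$ but is singular at the origin, so it is not a $\cD$-factor. I expect this to be resolved by replacing the single elliptic parametrix by a finite convolution decomposition $\delta=\sum_{i=1}^m\varphi_i*\sigma_i$ with $\varphi_i\in\cD$ (possible, e.g., by choosing the $\varphi_i$ so that their Fourier transforms have no common zero) and with each $\sigma_i$ acting continuously $\cD_{L^p}\to L^p$; then $f=\sum_i\varphi_i*(\sigma_i*f)$ with $\norm{\sigma_i*f}_p\le C\,q(f)$ for a fixed continuous seminorm $q$ on $\cD_{L^p}$, which gives $B\subset\sum_i\varphi_i*B_{1,p}$ after absorbing constants into the $\varphi_i$.

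Granting \ref{prop3due}, parts \ref{prop3tre} and \ref{prop3quatro} are quick. The seminorms $p_\varphi$ are $\beta(\cD'_{L^q},\cD_{L^p})$-continuous because $S\mapsto\varphi*S$ maps $\cD'_{L^q}$ continuously into $L^q$ for $\varphi\in\cD$ (and for $\varphi\in\cS$), so the generated topology is coarser than $\beta$. Conversely, for $B\subset\cD_{L^p}$ bounded, \ref{prop3due} yields $\varphi_1,\dots,\varphi_m\in\cD$ with $B\subset\sum_i\varphi_i*B_{1,p}$, and then, using $\langle\varphi_i*f,S\rangle=\langle f,\check\varphi_i*S\rangle$ together with $(L^p)'=L^q$,
\[ p_B(S)=\sup_{g\in B}\abso{\langle g,S\rangle}\le\sum_{i=1}^m\sup_{\norm f_p\le1}\abso{\langle f,\check\varphi_i*S\rangle}=\sum_{i=1}^m\norm{\check\varphi_i*S}_q=\sum_{i=1}^m p_{\check\varphi_i}(S), \]
so $\beta$ is coarser than the $p_\varphi$-topology; this proves \ref{prop3tre}. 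For \ref{prop3quatro} the family with $\varphi\in\cS$ contains the one with $\varphi\in\cD$, hence generates a topology finer than $\beta$, while still being coarser than $\beta$ by the same continuity of $S\mapsto\varphi*S$; therefore both coincide with $\beta$.

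Finally, for \ref{prop3uno} I would first note that, by Mackey's theorem, the bounded sets depend only on the dual pairing $(\cD_{L^p},\cD'_{L^q})$, so they are the same for the Fréchet topology and for $\kappa(\cD_{L^p},\cD'_{L^q})$. Relatively compact subsets of $\cD_{L^p,c}$ are bounded, so only the converse needs work. For $1<p<\infty$ I would use \ref{prop3due} to place $B$ inside $K=\sum_i\varphi_i*B_{1,p}$: since $L^p$ is reflexive, $(B_{1,p})^m$ is compact for the product weak topology, and the map $(f_i)\mapsto\sum_i\varphi_i*f_i$ is continuous from this weak topology into $\cD_{L^p,c}$, because for a compact $C\subset\cD'_{L^q}$ the set $\check\varphi_i*C$ is norm-compact in $L^q$ (continuous image of $C$ under $S\mapsto\check\varphi_i*S$) and on $B_{1,p}$ the weak topology coincides with uniform convergence on norm-compact subsets of $L^q$. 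Hence $K$, and with it $B$, is relatively compact in $\cD_{L^p,c}$. The case $p=1$ is the delicate point: $B_{1,1}$ is not weakly compact, and I would instead enlarge it to the unit ball of $\cM^1=(\cC_0)'$, which is weak-$*$ compact, and run the same Ascoli-type argument; alternatively this coincidence may be quoted from \cite{BNO}.
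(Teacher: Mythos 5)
The central step of your plan---the implication ``$\Rightarrow$'' in \ref{prop3due}---is not proved, and the repair you propose cannot work. You ask for a single, $B$-independent decomposition $\delta=\sum_{i=1}^m\varphi_i*\sigma_i$ with $\varphi_i\in\cD$ and each $\sigma_i$ a convolution operator acting continuously $\cD_{L^p}\to L^p$. No such decomposition exists, for a reason that has nothing to do with common zeros of Fourier transforms: for the formula to make sense the $\sigma_i$ must be distributions (a translation-invariant continuous operator $\cD_{L^p}\to L^p$ is convolution by a uniquely determined $S_i\in\cD'$), and the convolution of a distribution with a test function is always a smooth function, namely $(\varphi_i*\sigma_i)(x)=\langle\sigma_i,\varphi_i(x-\cdot)\rangle$. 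Hence the right-hand side is a $\cC^\infty$ function (indeed, with your continuity requirement, an element of $\cD_{L^p}$, since $\pd^\alpha(\varphi_i*\sigma_i)=\sigma_i*\pd^\alpha\varphi_i\in L^p$ for all $\alpha$), while $\delta$ is not a function. On the Fourier side this is the observation that $\widehat{\sigma_i}$ must be polynomially bounded while $\widehat{\varphi_i}$ is rapidly decreasing, so $\sum_i\widehat{\varphi_i}\widehat{\sigma_i}$ vanishes at infinity and can never be $\equiv 1$: the obstruction sits at infinity, not at the zeros. The same objection explains why your parametrix $\chi L_{2N}$ is ``unavoidably'' singular---no finite, fixed convolution factorisation can smooth it away.

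What is needed, and what the paper does, is a decomposition that \emph{depends on the bounded set} $B$. The paper invokes Lemme~2.5 of Dixmier--Malliavin \cite{DM1978}: with $c_m=\sup_{f\in B}\norm{f^{(2m)}}_p$ (dimension one first), one obtains $\alpha_m>0$ with $\alpha_m\le 2^{-m-1}/c_m$ and $g,h\in\cD$ such that $\sum_{m=0}^N(-1)^m\alpha_m\delta^{(2m)}*g\to\delta+h$ in $\cE'$. The co-factor $\sum_m(-1)^m\alpha_m\delta^{(2m)}$ is an \emph{infinite-order} operator, not a distribution and not continuous on $\cD_{L^p}$ (consistently with the obstruction above); but its partial sums applied to $\psi\in B$ form a Cauchy sequence in $L^p$ with limit $\chi\in B_{1,p}$, precisely because the $\alpha_m$ were tailored to $B$. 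Hypocontinuity of $\cE'\times\cD_{L^p}\to\cD_{L^p}$ then yields $\psi=g*\chi-h*\psi\in g*B_{1,p}+Ch*B_{1,p}$, and $n>1$ follows by tensorising $\delta$. Your remaining parts are fine modulo this: \ref{prop3tre} and \ref{prop3quatro} are the same duality computation as in the paper, and your reflexivity/Ascoli argument for \ref{prop3uno} is a legitimate alternative to the paper's citation of \cite{BNO} when $1<p<\infty$, once \ref{prop3due} is available. The $p=1$ patch, however, also fails: $\mu\mapsto\varphi*\mu$ is \emph{not} continuous from the unit ball of $\cM^1$ with $\sigma(\cM^1,\cC_0)$ into $\cD_{L^1,c}$, as one sees from $\mu_k=\delta_k\to 0$ weak-$*$ while $\langle\varphi*\delta_k,1\rangle=\int\varphi\neq 0$, the disc spanned by the constant $1$ being an absolutely convex compact subset of $\cD'_{L^\infty}$; so for $p=1$ you must indeed fall back on quoting \cite{BNO}, which is what the paper does for \ref{prop3uno}.
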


\begin{proof}
  \ref{prop3uno} follows by \cite[Prop. 5.1, p.~1701 and Prop. 5.3, p.~1702]{BNO}.

  \ref{prop3due} ``$\Leftarrow$'' is a consequence of the continuity of the convolution mapping $\cD \times L^p \xrightarrow{*} \cD_{L^p}$.

  ``$\Rightarrow$'': We first show the one-dimensional case. Let $B\subset \cD_{L^p}$ be bounded. We set
  \[
    c_m := \sup_{f\in B} \|f^{(2m)}\|_{p}
  \]
  and use Lemme~2.5 in~\cite[p.~309]{DM1978} to obtain a sequence $(\alpha_m)_{m=0}^{\infty}$ of positive numbers and two test functions $g,h\in \cD$ such that
  \[
    \alpha_m \leq \frac{1}{2^{m+1}c_m}\qquad\text{and}\qquad \sum_{m=0}^{N}(-1)^m \alpha_m \delta^{(2m)}*g \to \delta +h 
  \]
  in $\cE'$ for $N\to\infty$. Since the convolution $\cE'\times \cD_{L^p}\to \cD_{L^p}$ is hypocontinuous, we have that
  \[
    \sum_{m=0}^{N}(-1)^m \alpha_m \delta^{(2m)}*g*\psi \to \psi +h*\psi
  \]
  in $\cD_{L^p}$ for $N\to\infty$ uniformly with respect to $\psi \in B$. Moreover for $\psi\in B$ note that
  \begin{align*}
    \Big\|\sum_{m=0}^{N}(-1)^m \alpha_m \delta^{(2m)}*\psi&-\sum_{m=0}^{M} (-1)^m\alpha_m\delta^{(2m)}*\psi \Big\|_{p} \leq \sum_{m=N+1}^{M} \alpha_m \|\psi^{(2m)}\|_{p} \leq \sum_{m=N+1}^{M} 2^{-m-1}
  \end{align*}
  for $M\ge N$ which shows that $\sum_{m=0}^{N}(-1)^m \alpha_m \delta^{(2m)}*\psi\to \chi\in B_{1,p}$ for $N\to \infty$. In other words, we obtained a representation 
  \[
    \psi = g*\chi -  h*\psi  \in g*B_{1,p} + C h *B_{1,p}
  \]
  for all $\psi\in B$ and some $C>0$ depending only on $h$ and $c_0$ as required. For $n>1$, we use that $\delta(x) = \delta(x_1) \otimes\cdots\otimes \delta(x_n)$ and set
  \[
    c_m := \max\Big\{1, \sup_{|\alpha|\leq 2m, f\in B} \|\partial^\alpha f\|_{p}\Big\}
  \]
  which allows us to conclude the $n$-dimensional case from the one-dimensional case.

  \ref{prop3tre} The inequality
  \begin{align*}
    p_B(S) &= \sup_{\psi \in B} \abso{ \langle \psi, S \rangle } \le \sup_{f_1,\ldots,f_m \in B_{1,p}} \abso{ \langle \sum_{i=1}^{m} \varphi_i * f_i, S \rangle } \le \sum_{i=1}^{m} \sup_{f \in B_{1,p}} \abso{ \langle \varphi_i * f, S \rangle } = \sum_{i=1}^{m} \norm{\check \varphi_i * S}_q
  \end{align*}
  shows that the topology generated by the seminorms $p_\varphi$, $\varphi \in \cD$, is finer than the topology $\beta(\cD'_{L^q}, \cD_{L^p})$. The converse inequality follows from the continuity of
  \[ \cD'_{L^q} \to L^q, \quad S \mapsto \varphi * S, \]
  which is the transpose of the continuous mapping $L^p \to \cD_{L^p}, \; \psi \mapsto \check \varphi * \psi$.

  \ref{prop3quatro} Since $\cD \subset \cS$ this assertion is a direct consequence of \ref{prop3tre} together with the hypocontinuity of the convolution mapping
  \[
    \cS\times \cD'_{L^q} \to L^q, \qquad (\psi,S)\mapsto \psi*S
  \]
  which follows from $\cS\subset \cD_{L^1}$, see also~\cite{Larcher}.
\end{proof}

\begin{remarks}
  \begin{enumerate}
  \item Recall that the spaces $\cD'_{L^q}$ are isomorphic with the convolutor spaces
    \[
      \cO_{C}'(\cD,L^q) = \cO_{C}'(\cS, L^q) = \cO_{C}'(\cD,\cD'_{L^q})=\cO_{C}'(\cS,\cD'_{L^q})
    \]
    as defined in~\cite[p.~72]{Sch4} as vector spaces, cf.~\cite[p.~201]{Sch1}. Proposition~\ref{prop3}, and Proposition~\ref{prop8} for~$q=1$, show that if we equip these spaces with the topologies induced by $\cL_s(\cD,L^q)$, $\cL_s(\cS,L^q)$, $\cL_s(\cD,\cD_{L^q}')$, $\cL_s(\cS,\cD_{L^q}')$, which on the convolutor spaces agree with the ones induced by $\cL_b(\cD,L^q)$, $\cL_b(\cS,L^q)$, $\cL_b(\cD,\cD_{L^q}')$, $\cL_b(\cS,\cD_{L^q}')$, the above mentioned isomorphisms become isomorphisms in the sense of locally convex spaces. In particular the convolutor spaces are ultrabornological since $\cD_{L^q}'$ is ultrabornological. In the case of $\cO_{C}'(\cD,L^1)$ this is also a special case of Theorem~1.1 in~\cite[p.~830]{DV21}.
  \item Proposition~\ref{prop3}, \ref{prop3due}--\ref{prop3quatro} could also be formulated for spaces $\cD_{E}$ for translation invariant Banach spaces $E$ in the sense of~\cite{DPV2015} and the above proof also works in this more general setting. Note that these spaces are intimately related to convolutor spaces. In~\cite{DPV2015} it is shown that for a translation invariant Banach space, the space $\cD_{E'_*}'$ algebraically agrees with the space of convolutors $\cO'_C(\cD,E')$ and has the same bounded sets. Theorem~5.6 in~\cite[p.~134]{Andreas2019} shows the space $\cD_E$ is distinguished and hence $\cD'_{E'_*}$ is barrelled. 
  \item An independent proof of Proposition~\ref{prop3}, \ref{prop3due} for $p=2$ can be given using the Fourier transform and Lemma~\ref{lemma1}: $B\subset \cD_{L^2}$ is bounded if and only there is a $\varphi\in\cD$ or $\varphi\in\cS$ with $B\subset \varphi*B_{1,2}$.
  \item A similar result to Proposition~\ref{prop3}, \ref{prop3due} in the setting of ultradistributions has been given in Section~7 of~\cite{DPV21}.
  \end{enumerate}
\end{remarks}

Next, we characterise relatively compact sets in $\cD_{L^p}$ and describe the topologies of $\cD'_{L^q, c}$ by seminorms:

\begin{proposition}\label{prop4}
  Let $1 \le p < \infty$ and $\frac{1}{p} + \frac{1}{q} = 1$.
  \begin{enumerate}[label=(\roman*)]
  \item\label{prop4uno} Let $C \subset \cD_{L^p}$. Then
    \[
      C \textrm{ relatively compact } \Longleftrightarrow \exists g \in \dot\cB, \exists \varphi_1,\ldots,\varphi_m \in \cD\colon C \subset g\Big(\sum_{i=1}^{m}\varphi_i * B_{1,p}\Big)
    \]
    or, equivalently, $\exists g \in \dot\cB, \exists \varphi_1,\ldots,\varphi_m  \in \cD\colon C \subset \sum_{i=1}^{m} \varphi_i * (g B_{1,p})$.
  \item\label{prop4due} Let $C \subset \cD_{L^p}$. Then
    \[ C \textrm{ relatively compact } \Longleftrightarrow \exists g \in \dot\cB\ \exists D \subset \cD_{L^p}\textrm{ bounded}: C \subset gD. \]
    This characterisation is a factorisation of compact sets in the Fr\'echet space $\cD_{L^p}$ on which the Fr\'echet algebra $\dot\cB$ operates by multiplication.

  \item\label{prop4tre} The seminorms $p_{g, \varphi}$, $(g, \varphi) \in \dot\cB \times \cD$ defined by
    \[
      \cD'_{L^q} \ni S \mapsto p_{g,\varphi}(S) = \norm{g(\varphi * S)}_q
    \]
    generate the topology of $\cD'_{L^q, c}$.
    
    An equivalent description is given by the seminorms
    \[\cD'_{L^q} \ni S \mapsto \norm{\varphi * (gS)}_q.\]
    In both cases, using $\varphi\in \cS$ instead of $\varphi\in \cD$ results in an equivalent system of seminorms.
  \end{enumerate}
\end{proposition}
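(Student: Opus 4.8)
My plan is to take part~\ref{prop4due} as the central factorisation statement, to read off part~\ref{prop4uno} from it together with Proposition~\ref{prop3}, and to obtain part~\ref{prop4tre} by a polarity computation. The two facts I rely on throughout are that $\cD_{L^p}$ ($1\le p<\infty$) is a complete Fr\'echet space whose topology is generated by the norms $\norm{f}_{m,p}\coleq\sum_{\abso{\alpha}\le m}\norm{\pd^\alpha f}_p$, so that relative compactness of a subset amounts to total boundedness with respect to each $\norm{\cdot}_{m,p}$, and that relative compactness of a subset of $L^p$ is governed by the Fr\'echet--Kolmogorov criterion: boundedness, uniform $L^p$-equicontinuity, and uniform decay at infinity.

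For ``$\Leftarrow$'' in part~\ref{prop4due} I would show that multiplication by $g\in\dot\cB$ carries a bounded set $D\subset\cD_{L^p}$ to a relatively compact one. Fixing $\abso{\alpha}\le m$ and expanding $\pd^\alpha(gf)=\sum_{\beta\le\alpha}\binom{\alpha}{\beta}\pd^\beta g\,\pd^{\alpha-\beta}f$, each factor $\pd^\beta g$ lies in $\dot\cB$ and each family $\{\pd^\gamma f:f\in D\}$ is bounded in $L^p$. Boundedness of $D$ in \emph{every} norm $\norm{\cdot}_{m,p}$ makes $\{\pd^\gamma f\}$ uniformly $L^p$-equicontinuous, since its $L^p$-modulus of continuity at a shift $h$ is bounded by $\abso{h}\,\norm{f}_{\abso{\gamma}+1,p}$; splitting the increment of the product $\pd^\beta g\,\pd^\gamma f$ into one term carrying the uniformly continuous factor $\pd^\beta g$ and one carrying the equicontinuous factor $\pd^\gamma f$ then gives equicontinuity of the products, while uniform decay at infinity is immediate from $\pd^\beta g\in\dot\cB$. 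By Fr\'echet--Kolmogorov each $\{\pd^\alpha(gf):f\in D\}$ is relatively compact in $L^p$, hence $gD$ is relatively compact in $\cD_{L^p}$.

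The implication ``$\Rightarrow$'' in part~\ref{prop4due} is the main obstacle. Given $C$ relatively compact, the converse half of Fr\'echet--Kolmogorov gives, for every $m$, the uniform decay $\sup_{f\in C}\sum_{\abso{\alpha}\le m}\int_{\abso{x}>R}\abso{\pd^\alpha f}^p\,\ud x\to0$ as $R\to\infty$. I would then build $g\in\dot\cB$ by an envelope construction in the spirit of Lemma~\ref{lemma1}: pick radii $R_1<R_2<\cdots\to\infty$ with $\sup_{f\in C}\sum_{\abso{\alpha}\le j}\int_{\abso{x}>R_j}\abso{\pd^\alpha f}^p\,\ud x\le 4^{-j}$ and weights $c_j=2^{-j/(2p)}$, and interpolate a smooth strictly positive function equal to $c_j$ on the shell $R_j\le\abso{x}<R_{j+1}$, spacing the radii so widely that all derivatives of $1/g$ stay bounded. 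Then $g\in\dot\cB$, and a shell-by-shell estimate of $\norm{\pd^\alpha(f/g)}_p$ via Leibniz — the leading contribution $c_j^{-p}\int_{\text{shell }j}\abso{\pd^\beta f}^p$ being summable because $c_j^{-p}4^{-j}=2^{-3j/2}$ — shows $D\coleq\{f/g:f\in C\}$ is bounded in $\cD_{L^p}$, so that $C\subset gD$. Keeping the derivatives of $1/g$ harmless while ensuring $g\in\dot\cB$ is the delicate diagonal point. Part~\ref{prop4uno} in its first form is now immediate: a relatively compact $C$ satisfies $C\subset gD$ by part~\ref{prop4due}, and $D\subset\sum_i\varphi_i*B_{1,p}$ by Proposition~\ref{prop3}\ref{prop3due}; conversely any such set is $g$ times a bounded set and hence relatively compact by the paragraph above.

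For the second form of part~\ref{prop4uno} I would check directly, again via Fr\'echet--Kolmogorov, that $\varphi*(gB_{1,p})$ is relatively compact (the convolution supplies smoothness and equicontinuity, $g\in\dot\cB$ supplies tightness), and for the reverse I would use the parametrix $\delta\approx\sum_m(-1)^m\alpha_m\delta^{(2m)}*g_0$ of Proposition~\ref{prop3}\ref{prop3due} to write $f=g_0*\chi_f-h_0*f$ with $\chi_f=\sum_m\alpha_m\pd^{2m}f$; the family $\{\chi_f:f\in C\}$ is a tight bounded subset of $L^p$, so a zeroth-order version of the envelope construction factors it as $\chi_f\in g_1B_{1,p}$, giving $g_0*\chi_f\in g_0*(g_1B_{1,p})$, with the smoothing remainder absorbed exactly as in Proposition~\ref{prop3}. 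Finally, for part~\ref{prop4tre} the identity $\langle g(\varphi_i*f),S\rangle=\langle f,\check\varphi_i*(gS)\rangle$ and part~\ref{prop4uno} give $p_C(S)\le\sum_i\norm{\check\varphi_i*(gS)}_q$ for $C=g(\sum_i\varphi_i*B_{1,p})$, so the function-seminorms dominate all $\kappa$-seminorms; conversely $S\mapsto\norm{\varphi*(gS)}_q$ equals $p_{C'}(S)$ for the relatively compact set $C'=g(\check\varphi*B_{1,p})$ and is therefore $\kappa$-continuous, whence the two topologies coincide. The equality of the two seminorm descriptions and the replacement of $\varphi\in\cD$ by $\varphi\in\cS$ follow from the hypocontinuity of convolution $\cS\times\cD'_{L^q}\to L^q$, exactly as in Proposition~\ref{prop3}\ref{prop3quatro}.
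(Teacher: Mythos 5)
Your proposal is correct, but it reaches the central factorisations by a genuinely different route than the paper; the easy halves (Leibniz plus Fr\'echet--Kolmogorov for ``$\Leftarrow$'' in \ref{prop4due}, and the duality estimate giving that the function-seminorms dominate the $\kappa$-seminorms in \ref{prop4tre}) coincide with the paper's. For ``relatively compact $\Rightarrow C\subset gD$'' in \ref{prop4due} the paper constructs nothing by hand: it cites J.~Voigt's factorisation theorems for Fr\'echet algebras possessing a uniformly bounded approximate unit (the Gaussians $\mathrm{e}^{-\abso{x}^2/k^2}$ in $\dot\cB$), applied to $\cD_{L^p}$ as a $\dot\cB$-module; and for \ref{prop4uno} it first applies Dixmier--Malliavin to write $C\subset\sum_j\varphi_j*C_j$ with $C_j$ compact in $L^p$ and then factors $\bigcup_j C_j\subset gB_{1,p}$ by the same machinery, as in Lemma~\ref{lemma2}. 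Your shell-and-envelope construction --- radii chosen from the uniform tail decay supplied by the converse Fr\'echet--Kolmogorov theorem, $g\approx 2^{-j/(2p)}$ on the $j$-th shell, transition widths of order $2^{j/(2p)}$ so that $g\in\dot\cB$ while every derivative of $1/g$ of positive order stays bounded, and the summable bound $c_j^{-p}4^{-j}=2^{-3j/2}$ for the $\beta=\alpha$ Leibniz term --- replaces this abstract input by a concrete computation in the spirit of the Grothendieck--Roider proof of Lemma~\ref{lemma1}; the exponents and the Leibniz bookkeeping do check out, since only the top-order term needs the shell estimate and all others use boundedness of $\pd^\gamma(1/g)$, $\gamma\neq 0$. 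What you gain is an elementary, self-contained argument uniform in method with Section~\ref{sec2}; what the paper gains is brevity and reusability, since the identical Voigt argument also disposes of Lemma~\ref{lemma2} and of Propositions~\ref{prop7} and~\ref{prop11}. Your deduction of the first form of \ref{prop4uno} from \ref{prop4due} plus Proposition~\ref{prop3}, and of the second form via the parametrix plus a zeroth-order envelope, is sound; the one detail to write out is that the two summands $g_0*(g_1B_{1,p})$ and $h_0*(g_2B_{1,p})$ must be brought under a single $g$, e.g.\ by running both envelope constructions over common shells. Finally, in \ref{prop4tre} your identification of $S\mapsto\norm{\varphi*(gS)}_q$ with $p_{C'}$ for the relatively compact set $C'=g(\check\varphi*B_{1,p})$ is in fact a sharper justification of the ``coarser'' half than the paper's one-line appeal to continuity of $S\mapsto\varphi*(gS)$, where, incidentally, the paper misprints $\beta(\cD'_{L^q},\cD_{L^p})$ for the intended $\kappa$-topology.
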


\begin{proof}
  \ref{prop4due} ``$\Leftarrow$'': By Leibniz' formula we obtain, for all $\alpha \in \bN_0^n$,
  \[ \pd^\alpha C \subset \sum_{\beta \le \alpha} \binom{\alpha}{\beta} \pd^{\alpha-\beta} g \pd^\beta D, \]
  i.e., the boundedness and the uniform smallness of all $\pd^\alpha C$. The $L^p$-equicontinuity of $\pd^\alpha C$ is implied by
  \[ \norm{ \tau_h ( \pd^\alpha C) - (\pd^\alpha C)}_p \le \norm{\pd^{\alpha + \ell} C}_p \cdot \abso{h}, \]
  $h \in \bR^n$, $\ell = (1,\dotsc, 1)$, and the boundedness of $\pd^{\alpha + \ell} C$ in $L^p$. Therefore, $\pd^\alpha C$ is relatively compact by the M.~Fr\'echet-A.~Kolmogorov-M.~Riesz-H.~Weyl-Theorem.

  ``$\Rightarrow$'':
  We want to apply a factorisation theorem for Fr\'{e}chet algebras. First note that by Proposition~1.6 in~\cite[p.~336]{Voigt} the sequence $(\mathrm{e}^{-|x|^2/k^2})_{k=1}^{\infty}$ is a uniformly bounded approximate unit for the Fr\'{e}chet algebra $\dot{\cB}$ with multiplication. If we show that $\mathrm{e}^{-|x|^2/k^2}f\to f$ in $\cD_{L^p}$ for $f\in\cD_{L^p}$ and $k\to\infty$ the claim follows from Corollary~4 in~\cite[p.~149]{VoigtFactorization}. Note that  $\mathrm{e}^{-|x|^2/k^2} \to 1$ in $\cE$ which together with $\cD_{L^p}\subset\dot{\cB}$ implies the needed approximation relation.

  \ref{prop4uno} ``$\Rightarrow$'': Since $C\subset \cD_{L^p}$ is compact, we may use Lemme~2.5 in~\cite[p.~309]{DM1978} to obtain test functions $\varphi_1,\ldots, \varphi_m \in \cD$ and compact sets $C_1,\ldots, C_m \subset L^p$ with
  \[
    C \subset \sum_{j=1}^{m} (\varphi_j* C_j) \subset \sum_{j=1}^{m} \varphi_j * D
  \]
  where $D:=\bigcup_{j=1}^{m} C_j$ which is a compact subset of $L^p$ as well. Again, we use that the sequence $\mathrm{e}^{-|x|^2/k^2}$ is a uniformly bounded approximate unit for the Fr\'{e}chet algebra $\dot\cB$. Arguing as in Lemma~2.6 in~\cite[p.~1698]{BNO}, considering $L^p$ as a Banach module over the Fr\'{e}chet algebra $\dot\cB$ and using Theorem~3 in~\cite[p.~148]{VoigtFactorization}, we may conclude the existence of a function $g\in\dot\cB$ with $D\subset g B_{1,p}$. Summing up, this results in $C \subset \sum_{j=1}^{m} \varphi_j * (g B_{1,p})$, as required.

  The converse implication follows from the continuity of the convolution mapping $\cD\times \cD_{L^p}\to\cD_{L^p}$ together with~\ref{prop4due}.

  \ref{prop4tre} If $C \subset \cD_{L^p}$ is compact and $S \in \cD'_{L^q}$ then
  \begin{align*}
    p_C(S) &\le \sup_{\norm{f}_p \le 1} \abso{\langle g(\varphi * f), S \rangle } = \sum_{i=1}^{m} \sup_{f \in B_{1,p}}\abso{\langle f, \check\varphi_i * (gS) \rangle } = \sum_{i=1}^{m} \norm{\check \varphi_i *(gS)}_q.
  \end{align*}
  Hence, the topology on $\cD'_{L^q}$ generated by the seminorms $p_{g, \varphi}$ is finer than $\beta(\cD'_{L^q}, \cD_{L^p})$. It also is coarser because the mapping
  \[ \cD'_{L^q} \to L^q, S \mapsto \varphi * (gS), \]
  is continuous.

  The equivalent description in \ref{prop4tre} immediately results from the second characterisation of compact sets in \ref{prop4uno}. The equivalence of the systems with $\varphi\in \cS$ follows from Proposition~\ref{prop4}, \ref{prop4tre}.
\end{proof}

In the next proposition we recall topology-generating seminorms on $\cD_{L^p}$ and $\cD_{L^p, c}$:

\begin{proposition}\label{prop5}
  Let $1 \le p \le \infty$ and $L_{-2m} = (1 - \Delta_n)^m \delta$, $m \in \bN_0$.
  \begin{enumerate}[label=(\roman*)]
  \item\label{prop5uno} The topology of $\cD_{L^p}$ is generated by the seminorms $p_m$, $m \in \bN_0$, defined by
    \[ \cD_{L^p} \ni \varphi \mapsto p_m(\varphi) = \sup_{|\alpha|\leq m} \norm{\pd^\alpha \varphi}_p, \]
    or equivalently by
    \[ \cD_{L^p} \ni \varphi \mapsto \norm{L_{-2m} * \varphi}_p. \]
  \item\label{prop5due} The topology of $\cD_{L^p, c}$ is generated by the seminorms $p_{g,m}$, $g \in \cC_0$, $m \in \bN_0$, defined by
    \[ \cD_{L^p} \ni \varphi \mapsto p_{g,m}(\varphi) = \norm{g(L_{-2m} * \varphi)}_p. \]
  \end{enumerate}
\end{proposition}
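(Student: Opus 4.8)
The plan is to treat (i) and (ii) separately. Part~(i) is essentially the definition of $\cD_{L^p}$ refined by the mapping properties of Bessel kernels, while part~(ii) is the corresponding ``$c$-topology'' statement whose nontrivial half is a factorisation of the compact subsets of $\cD'_{L^q}$. For (i), the description by $p_m$ is the definition of the Fréchet topology of $\cD_{L^p}$, so only the equivalence with the single seminorm $\varphi\mapsto\norm{L_{-2m}*\varphi}_p=\norm{(1-\Delta_n)^m\varphi}_p$ needs an argument, which I would give in both directions. Since $(1-\Delta_n)^m=\sum_{\abso{\beta}\le 2m}c_\beta\pd^\beta$ is a finite combination, $\norm{(1-\Delta_n)^m\varphi}_p\le C\,p_{2m}(\varphi)$ is immediate. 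Conversely, writing $\varphi=L_{2m'}*(1-\Delta_n)^{m'}\varphi$ (from $L_{2m'}*L_{-2m'}=\delta$) gives $\pd^\alpha\varphi=(\pd^\alpha L_{2m'})*\big((1-\Delta_n)^{m'}\varphi\big)$, whence $\norm{\pd^\alpha\varphi}_p\le\norm{\pd^\alpha L_{2m'}}_1\,\norm{(1-\Delta_n)^{m'}\varphi}_p$ by Young's inequality. The only input is the classical fact that $\pd^\alpha L_{2m'}\in L^1$ whenever $2m'>\abso{\alpha}$; choosing $2m'>m$ then bounds $p_m$ by $\norm{L_{-2m'}*\varphi}_p$.

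For (ii), recall that $\cD_{L^p,c}$ carries the topology of uniform convergence on the absolutely convex compact subsets of $\cD'_{L^q}$, so the task is to compare the $p_{g,m}$ with the seminorms $p_C$. The pairing identity
\[ p_{g,m}(\varphi)=\norm{g\,(1-\Delta_n)^m\varphi}_p=\sup_{f\in B_{1,q}}\abso{\langle\varphi,(1-\Delta_n)^m(gf)\rangle} \]
shows $p_{g,m}=p_C$ with $C=(1-\Delta_n)^m(gB_{1,q})=L_{-2m}*(gB_{1,q})$. The easy half is that each $p_{g,m}$ is $\kappa$-continuous, i.e.\ that this $C$ is relatively compact in $\cD'_{L^q}$. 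I would derive this from Schauder's theorem: the multiplication $M_g\colon\cD_{L^p}\to L^p$, $\varphi\mapsto g\varphi$, is compact because bounded subsets of $\cD_{L^p}$ are bounded in $W^{1,p}$, so multiplying by $g\in\cC_0$ produces $L^p$-bounded, $L^p$-equicontinuous families that are uniformly small at infinity, hence relatively compact by the Fréchet–Kolmogorov–Riesz theorem used in Proposition~\ref{prop4}. The transpose $M_g^t\colon L^q\to\cD'_{L^q}$, $f\mapsto gf$, is then compact, so $gB_{1,q}$ is relatively compact in $\cD'_{L^q}$; applying the continuous operator $(1-\Delta_n)^m$ and passing to the closed absolutely convex hull (compact since $\cD'_{L^q}$ is complete) yields the required $C$.

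The real work is the converse: every absolutely convex compact $C\subset\cD'_{L^q}$ must be shown to lie in $\overline{\Gamma}\big((1-\Delta_n)^{\tilde m}(g B_{1,q})\big)$ for some order $\tilde m$ and some $g$, which gives $p_C\le p_{g,m}$. Since $C$ is bounded and $\cD_{L^p}$ is barrelled, $C$ is equicontinuous; using the single-seminorm form from (i), the polar of a basic neighbourhood $\{\norm{(1-\Delta_n)^m\varphi}_p\le\e\}$ is exactly $(1-\Delta_n)^m(\e^{-1}B_{1,q})$, so $C$ lands at a finite order, $C\subset(1-\Delta_n)^{m}(rB_{1,q})$. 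The subtle point is that the coefficient $C'=(1-\Delta_n)^{-m}C\subset rB_{1,q}$ is only bounded, not relatively compact, in $L^q$ (e.g.\ modulations $g\,\mathrm{e}^{\mathrm{i}k x}$ of a fixed $g$ are compact in $\cD'_{L^q}$ but not in $L^q$), so one cannot simply factor $C'$ in $L^q$. Instead I would factor $C$ inside $\cD'_{L^q}$ as in Proposition~\ref{prop4}: the approximate unit $(\mathrm{e}^{-\abso{x}^2/k^2})_k$ of $\dot\cB$ converges to the identity uniformly on the compact set $C$, and Voigt's factorisation theorem for $\cD'_{L^q}$ as a module over $\dot\cB$ produces $g\in\dot\cB\subset\cC_0$ and a compact $\tilde C$ with $C\subset g\tilde C\subset g\,(1-\Delta_n)^{\tilde m}(rB_{1,q})$.

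I expect the main obstacle to be that this multiplicative factorisation places $g$ \emph{outside} the operator $(1-\Delta_n)^{\tilde m}$, whereas the target seminorm has $g$ multiplying $(1-\Delta_n)^m\varphi$, and the two must be reconciled by Leibniz' rule. Writing $g\,(1-\Delta_n)^{\tilde m}h=(1-\Delta_n)^{\tilde m}(gh)+\sum(\pd^\beta g)(\pd^\gamma h)$ with $\abso{\gamma}<2\tilde m$ (legitimate since $g\in\dot\cB$ multiplies distributions), the leading term has the desired form $(1-\Delta_n)^{\tilde m}(g\,rB_{1,q})$, while each commutator term has strictly lower differential order in $h$ and a bounded $\cC_0$-factor $\pd^\beta g$, so it is dominated by finitely many seminorms $p_{g',m'}$ with $m'<\tilde m$; an induction on the order then closes the estimate and shows that $\{p_{g,m}\}$ is finer than $\kappa(\cD_{L^p},\cD'_{L^q})$. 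The endpoint cases $p\in\{1,\infty\}$ would follow by the same scheme, with $\cC_0$, $\dot\cB$ and $\cM^1$ taking over the role of the missing reflexive duality exactly as in the companion statements of this section.
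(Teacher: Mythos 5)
Your part (i) is correct and is exactly what the paper's one-line proof intends: $p_m$ is the defining system of seminorms, one inequality follows from expanding $(1-\Delta_n)^m$ into finitely many derivatives, and the converse from $\varphi = L_{2m'}*(L_{-2m'}*\varphi)$, Young's inequality and $\pd^\alpha L_{2m'}\in L^1$ for $2m'>\abso{\alpha}$. For part (ii) the paper gives no argument at all (it cites Propositions 2.5 and 3.2 of \cite{BNO}), so your self-contained attempt is a genuinely different route. Its first half is sound: $p_{g,m}=p_C$ for $C=L_{-2m}*(gB_{1,q})$, and this $C$ is relatively compact in $\cD'_{L^q}$ by your Schauder plus Fr\'echet--Kolmogorov--Riesz argument (your counterexample with modulations, showing that bounded order alone cannot suffice for the converse, is also correct and apt).

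The gap is in the converse half, at the step ``Leibniz' rule plus induction on the order''. After factoring $C\subset g\,(1-\Delta_n)^{\tilde m}(rB_{1,q})$ with $g$ \emph{outside} the operator, Leibniz gives $p_C(\psi)\le r\norm{(1-\Delta_n)^{\tilde m}(g\psi)}_p\le r\sum_{\alpha,\beta} \abso{c_{\alpha\beta}}\,\norm{(\pd^\alpha g)\,\pd^\beta\psi}_p$, and each term $\norm{h\,\pd^\beta\psi}_p$ with $h=\pd^\alpha g\in\cC_0$ is a problem of exactly the same kind as the one being solved: a $\cC_0$-weight not matched to the differential operator by any factorisation. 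An induction on $\abso{\beta}$ has no mechanism to reduce it: the obvious peeling $h\,\pd^\beta\psi=\pd^{e_j}(h\,\pd^{\beta-e_j}\psi)-(\pd^{e_j}h)\,\pd^{\beta-e_j}\psi$ leaves a full derivative outside the product, and $\norm{\pd^{e_j}(\,\cdot\,)}_p$ is not dominated by lower-order seminorms of $\psi$. What actually closes the estimate is another run of the compactness machinery, not an induction: write $\pd^\beta\psi=k*\bigl((1-\Delta_n)^{m'}\psi\bigr)$ with $k\coleq\pd^\beta L_{2m'}\in L^1$, $2m'>\abso{\beta}$, check via Fr\'echet--Kolmogorov--Riesz that $\check k*(hB_{1,q})$ is relatively compact in $L^q$ (bounded, $L^q$-equicontinuous, and tight because $h\in\cC_0$), and apply Lemma~\ref{lemma2} to get $\varphi\in\dot\cB$ with $\check k*(hB_{1,q})\subset\varphi B_{1,q}$, whence $\norm{h\,\pd^\beta\psi}_p\le\norm{\varphi\,(1-\Delta_n)^{m'}\psi}_p=p_{\varphi,m'}(\psi)$. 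Moreover, the whole difficulty is self-inflicted by choosing the factorisation in the form of Proposition~\ref{prop7} \ref{prop7quattro} ($g$ outside). Proposition~\ref{prop7} \ref{prop7tre} gives the weight \emph{inside}: a relatively compact $C\subset\cD'_{L^q}$ satisfies $C\subset L_{-2m}*(\varphi B_{1,q})$ with $\varphi\in\dot\cB\subset\cC_0$, and then, since $L_{-2m}$ is even,
\[
  p_C(\psi)\le\sup_{f\in B_{1,q}}\abso{\langle\psi,L_{-2m}*(\varphi f)\rangle}=\sup_{f\in B_{1,q}}\abso{\langle\varphi\,(L_{-2m}*\psi),f\rangle}=\norm{\varphi(L_{-2m}*\psi)}_p=p_{\varphi,m}(\psi)
\]
in one line, with no Leibniz terms to reconcile; the endpoint cases follow in the same way from Proposition~\ref{prop11} \ref{prop11uno} (compact subsets of $\dot\cB$, for $p=1$) and from the factorisation of compact subsets of $\cD'_{L^1}$ used in the proof of Proposition~\ref{prop13} (for $p=\infty$).
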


\begin{proof}
  \ref{prop5uno} follows from the definition of the topology of $\cD_{L^p}$ and from the properties of the Bessel kernels $L_\lambda$.

  \ref{prop5due} see \cite[Propositions 2.5 and 3.2]{BNO}.
\end{proof}

\begin{proposition}\label{prop6}
  Let $1 < q \le \infty$.

  \begin{enumerate}[label=(\roman*)]
  \item\label{prop6uno} The bounded sets in $\cD'_{L^q}$ and $\cD'_{L^q, c}$ coincide with the relatively compact sets in $\cD'_{L^q, c}$.
  \item\label{prop6due} They are characterised by:
    \[ B \subset \cD'_{L^q} \textrm{ bounded } \Longleftrightarrow \exists m \in \bN_0\ \exists C_m>0: B \subset C_m (L_{-2m} * B_{1,q}) \]
  \end{enumerate}
\end{proposition}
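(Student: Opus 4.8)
The plan is to treat the two assertions separately: \ref{prop6uno} will follow from general duality principles, and \ref{prop6due} from an explicit Hahn--Banach factorisation.

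For \ref{prop6uno}, I would first use that $\cD_{L^p}$ is a Fr\'echet space, hence barrelled, so by the Banach--Steinhaus theorem a subset $B\subset\cD'_{L^q}$ is bounded---for any of the topologies lying between the weak and the strong topology of the dual pair $(\cD'_{L^q},\cD_{L^p})$, in particular for $\beta$ and for $\kappa$---if and only if it is equicontinuous. This already shows that the bounded sets of $\cD'_{L^q}$ and of $\cD'_{L^q,c}$ coincide. To identify them with the relatively compact sets of $\cD'_{L^q,c}$, I would invoke Alaoglu--Bourbaki (the weak$^*$ closure of an equicontinuous set is equicontinuous and weak$^*$ compact) together with the Ascoli-type theorem that on an equicontinuous subset of $\cD'_{L^q}$ the topology of pointwise convergence $\sigma(\cD'_{L^q},\cD_{L^p})$ agrees with the topology $\kappa$ of uniform convergence on the compact subsets of the complete space $\cD_{L^p}$. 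Thus the weak$^*$ closure of a bounded $B$ is $\kappa$-compact, so $B$ is relatively compact in $\cD'_{L^q,c}$; conversely every relatively $\kappa$-compact set is $\kappa$-bounded, hence bounded.

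For the implication ``$\Leftarrow$'' of \ref{prop6due}, I would note that the embedding $L^q\hookrightarrow\cD'_{L^q}$ is continuous---by Young's inequality $\norm{\varphi*g}_q\le\norm{\varphi}_1\norm{g}_q$ the seminorms $\norm{\varphi*S}_q$ of Proposition~\ref{prop3} are dominated by multiples of $\norm{g}_q$---and that $L_{-2m}*\,{=}\,(1-\Delta_n)^m$ acts continuously on $\cD'_{L^q}$. Hence $C_m(L_{-2m}*B_{1,q})$ is the image of the bounded set $C_m B_{1,q}\subset L^q$ under a continuous linear map, and is therefore bounded.

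The core of the proposition is the implication ``$\Rightarrow$'' of \ref{prop6due}, which I expect to be the main obstacle. Given a bounded, hence equicontinuous, set $B$ and the generating seminorms $\varphi\mapsto\norm{L_{-2m}*\varphi}_p$ of Proposition~\ref{prop5}, equicontinuity produces a single $m\in\bN_0$ and a constant $C>0$ with $\abso{\langle\varphi,S\rangle}\le C\norm{L_{-2m}*\varphi}_p$ for all $\varphi\in\cD_{L^p}$ and all $S\in B$. The operator $A\coleq L_{-2m}*\,{=}\,(1-\Delta_n)^m\colon\cD_{L^p}\to L^p$ is injective (on the Fourier side it is multiplication by $(1+\abso{\xi}^2)^m$), so $A\varphi\mapsto\langle\varphi,S\rangle$ is a well-defined linear functional of norm $\le C$ on the subspace $A(\cD_{L^p})\subset L^p$. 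I would extend it by Hahn--Banach to all of $L^p$ and, using $(L^p)'=L^q$ (valid for $1\le p<\infty$, i.e.\ $1<q\le\infty$, and giving $L^\infty$ in the end-point case $p=1$), represent it by some $g_S\in L^q$ with $\norm{g_S}_q\le C$. Formal self-adjointness of $A$ then yields $\langle\varphi,S\rangle=\langle A\varphi,g_S\rangle=\langle\varphi,Ag_S\rangle$ for every $\varphi$, whence $S=Ag_S=L_{-2m}*g_S$. Since the same $m$ and $C$ serve every $S\in B$, this gives $B\subset C(L_{-2m}*B_{1,q})$ with $C_m=C$. The delicate points---and where I would concentrate the verification---are the well-definedness of the intermediate functional, which rests on the injectivity of $A$, and the identity $\langle A\varphi,g_S\rangle=\langle\varphi,Ag_S\rangle$, i.e.\ the self-adjointness of $(1-\Delta_n)^m$ with respect to the pairing between $\cD_{L^p}$ and $\cD'_{L^q}$.
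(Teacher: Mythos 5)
Your proposal is correct, and for the heart of the matter it takes a genuinely different route from the paper. Part \ref{prop6uno} and the implication ``$\Leftarrow$'' of \ref{prop6due} coincide with the paper's argument (weakly bounded $\Rightarrow$ equicontinuous by barrelledness of the Fr\'echet space $\cD_{L^p}$, Alaoglu--Bourbaki plus the Ascoli-type coincidence of $\sigma$ and $\kappa$ on equicontinuous sets; continuity of $L^q \to \cD'_{L^q}$, $g \mapsto L_{-2m}*g$). For ``$\Rightarrow$'' of \ref{prop6due}, however, the paper simply represents $\cD'_{L^q} = \varinjlim_m L_{-2m}*L^q$ as an (LB)-space and invokes the abstract theorem that a complete (LB)-space is regular, so that a bounded set sits boundedly in one step $L_{-2m}*L^q \cong L^q$; you instead reprove this concretely: equicontinuity against the seminorms $\norm{L_{-2m}*\varphi}_p$ of Proposition~\ref{prop5} gives a single estimate $\abso{\langle\varphi,S\rangle}\le C\norm{L_{-2m}*\varphi}_p$ uniform in $S\in B$, and Hahn--Banach on the subspace $(1-\Delta_n)^m(\cD_{L^p})\subset L^p$ together with $(L^p)'=L^q$ and the distributional self-adjointness of $(1-\Delta_n)^m$ yields $S=L_{-2m}*g_S$ with $\norm{g_S}_q\le C$. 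This is in essence Schwartz's classical proof of the structure theorem for $\cD'_{L^q}$, and it buys self-containedness: you need neither the completeness of $\cD'_{L^q}$ nor the regularity theorem for (LB)-spaces, only Banach--Steinhaus, Hahn--Banach and Riesz representation. The paper's appeal to regularity buys brevity. Two small points you should make explicit: the family $\norm{L_{-2m}*\varphi}_p$ is directed, since $\norm{L_{-2j}*\varphi}_p\le\norm{L_{2(m-j)}}_1\norm{L_{-2m}*\varphi}_p$ for $j\le m$ by Young's inequality and $L_{2k}\in L^1$ for $k\ge 1$ (this is what lets you pass from a finite maximum of seminorms to a single $m$); and the identity $\langle A\varphi,g_S\rangle=\langle\varphi,Ag_S\rangle$ need only be checked for $\varphi\in\cD$, where it is the definition of distributional derivatives, after which $S=Ag_S$ follows because two distributions agreeing on $\cD$ are equal --- no density of $\cD$ in $\cD_{L^p}$ is actually required at this step.
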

\begin{proof}
  \ref{prop6uno} We imitate the reasoning from \cite[pp.~126--127]{Sch4}: bounded sets in $\cD'_{L^q}$ also are bounded in $\cD'_{L^q, c}$. Conversely, bounded sets in $\cD'_{L^q, c}$ also are $\sigma(\cD'_{L^q}, \cD_{L^p})$-bounded ($1/p + 1/q = 1$). Hence, they are $\beta(\cD'_{L^q}, \cD_{L^p})$-bounded because $\cD_{L^q}'$ is complete. They are equicontinuous on $\cD_{L^p}$ and thus also relatively compact in $\cD'_{L^q, c}$

  \ref{prop6due} ``$\Rightarrow$'': the (LB)-space $\cD'_{L^q}$ is complete and hence regular. Therefore, there exist for a bounded set $B$ in $\cD'_{L^q}$ some $m \in \bN_0$ and $C_m>0$ such that
  \[ B \subset C_m ( L_{-2m} * B_{1,q}). \]

  ``$\Leftarrow$'' follows from the continuity of the mappings
  \[ \varphi \mapsto L_{-2m} * \varphi,\quad L^q \to \cD'_{L^q}. \qedhere \]
\end{proof}

\begin{remark}
  The above characterisation is essentially the one given in Remarque 2 in~\cite[p.~202]{Sch1}.
  Other characterisations of the bounded subsets of $\cD'_{L^q}$ are given in Theorem~1 in~\cite[p.~51]{AP1994} and Theorem~3.1 in~\cite[p.~178]{APV2014}.
\end{remark}

In order to characterise compact subsets in $\cD'_{L^q}$ in Proposition~\ref{prop7} below we need

\begin{lemma}\label{lemma2}
  Let $C$ be a compact subset of $L^q$, $1 \le q < \infty$. Then there exists a function $\varphi \in \dot{\cB}$ such that $C \subset \varphi B_{1,q}$ (factorisation of compact sets in $L^q$ over the Fr\'echet algebra $\dot{\cB}$ operating by multiplication).
\end{lemma}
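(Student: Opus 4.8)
The plan is to realise $L^q$ as a Banach module over the Fréchet algebra $\dot\cB$ acting by pointwise multiplication, and then to invoke the Cohen--Hewitt type factorisation of \emph{compact} subsets of such modules, exactly as in the sub-argument already used for Proposition~\ref{prop4}, \ref{prop4uno}. The decisive point is that compactness of $C$, rather than mere boundedness, is what permits a \emph{single} multiplier $\varphi$ to factorise all $f \in C$ simultaneously; for a bounded non-compact set in $L^q$ no such factorisation over $\dot\cB$ can be expected.

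First I would check the module axioms. Pointwise multiplication $\dot\cB \times L^q \to L^q$ is continuous because $\norm{g f}_q \le \norm{g}_\infty \norm{f}_q$ and $g \mapsto \norm{g}_\infty$ is a continuous seminorm on $\dot\cB$; hence $L^q$ is a Banach $\dot\cB$-module. By Proposition~1.6 in~\cite[p.~336]{Voigt} the sequence $(\mathrm{e}^{-\abso{x}^2/k^2})_{k=1}^\infty$ is a uniformly bounded approximate unit for $\dot\cB$, and since $0 \le \mathrm{e}^{-\abso{x}^2/k^2} \le 1$ with $\mathrm{e}^{-\abso{x}^2/k^2} \to 1$ pointwise, dominated convergence (this is where $q < \infty$ enters) gives $\mathrm{e}^{-\abso{x}^2/k^2} f \to f$ in $L^q$ for every $f \in L^q$. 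Thus the same sequence is a bounded approximate unit for the module action, so $L^q$ is an essential $\dot\cB$-module and the module is non-degenerate.

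With these hypotheses in place I would apply the factorisation theorem for compact subsets of Banach modules over Fréchet algebras possessing a bounded approximate unit, Theorem~3 in~\cite[p.~148]{VoigtFactorization}, which (as used in the proof of Proposition~\ref{prop4}) directly yields a function $\varphi \in \dot\cB$ with $C \subset \varphi B_{1,q}$; alternatively it produces $\varphi_0 \in \dot\cB$ and a \emph{compact} set $C' \subset L^q$ with $C = \varphi_0 C'$, and since $C'$ is bounded, say $C' \subset R\,B_{1,q}$, one passes to $\varphi \coleq R\,\varphi_0 \in \dot\cB$ to absorb the constant. The only genuinely substantive step is the verification of the essential-module property, i.e.\ that the approximate unit of $\dot\cB$ also acts as an approximate unit on $L^q$; once this is secured the factorisation theorem supplies the remaining content, and the normalisation of the unit ball is routine because $\dot\cB$ is a vector space.
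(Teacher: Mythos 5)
Your proposal is correct and follows essentially the same route as the paper: both realise $L^q$ as a Banach module over $\dot\cB$, invoke Proposition~1.6 of Voigt for the uniformly bounded approximate unit $(\mathrm{e}^{-\abso{x}^2/k^2})_k$, verify that it acts as an approximate unit on $L^q$ (you via dominated convergence, the paper via uniform convergence on compact sets), and conclude with the factorisation theorem from Voigt's \emph{Factorization in Fr\'echet algebras} (you cite Theorem~3, the paper Corollary~4 of the same work, a negligible difference since the paper itself uses Theorem~3 for the identical purpose in Proposition~\ref{prop4}).
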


\begin{proof}
  Since by Proposition~1.6 \cite[p.~336]{Voigt} the sequence $(\mathrm{e}^{-|x|^2/k^2})_{k=1}^{\infty}$ is a uniformly bounded approximate unit for the Fr\'{e}chet algebra $\dot{\cB}$ with multiplication we only have to show that $\mathrm{e}^{-|x|^2/k^2}f\to f$ in $L^q$ for $f\in L^q$ and $k\to\infty$ in order to conclude the claim from Corollary~4 in~\cite[p.~149]{VoigtFactorization}. The required convergence follows from the observation that $\mathrm{e}^{-|x|^2/k^2}\to 1$ uniformly on compact sets for $k\to\infty$.
  
\end{proof}

\begin{proposition}\label{prop7}
  Let $1 < q < \infty$ and $C \subset \cD'_{L^q}$. Then the following assertions are equivalent:
  \begin{enumerate}[label=(\roman*)]
  \item\label{prop7uno} $C$ is relatively compact;
  \item\label{prop7due} $\exists (m,\varphi_1,\ldots,\varphi_M) \in \bN_0 \times \dot\cB^{M}$: $C \subset \sum_{j=1}^{M} \varphi_j (L_{-2m} * B_{1,q})$;
  \item\label{prop7tre} $\exists (m,\varphi) \in \bN_0 \times \dot\cB$: $C \subset L_{-2m} * (\varphi B_{1,q})$;
  \item\label{prop7quattro} $\exists \varphi \in \dot\cB$, $\exists D \subseteq \cD'_{L^q}$ bounded: $C \subset \varphi D$.
  \end{enumerate}
\end{proposition}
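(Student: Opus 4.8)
The plan is to establish all four equivalences through the single cycle (i)$\Rightarrow$(iii)$\Rightarrow$(ii)$\Rightarrow$(iv)$\Rightarrow$(i), so that the genuinely analytic content is isolated in the last implication. Throughout I would work with the Banach steps $(\cD'_{L^q})_m\coleq L_{-2m}*L^q\cong L^q$ of the complete (LB)-space $\cD'_{L^q}$, together with the identity $L_{-2m}*u=(1-\Delta)^m u$ and its inverse $L_{2m}*=(1-\Delta)^{-m}$.

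For (i)$\Rightarrow$(iii) I would use that a relatively compact $C$ is in particular bounded, hence by Proposition~\ref{prop6} contained in some step; invoking moreover that $\cD'_{L^q}$ is compactly regular (its relatively compact sets are relatively compact in a Banach step) yields a compact subset $\overline{L_{2m}*C}$ of $L^q$. Lemma~\ref{lemma2} then factorises it as $\overline{L_{2m}*C}\subset\varphi B_{1,q}$ with $\varphi\in\dot\cB$, whence $C\subset L_{-2m}*(\varphi B_{1,q})$, which is (iii). Note that $1\le q<\infty$ enters here through Lemma~\ref{lemma2}.

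The implication (iii)$\Rightarrow$(ii) is a commutation computation. An element of the right-hand side of (iii) has the form $(1-\Delta)^m(\varphi f)$ with $\norm{f}_q\le1$, and Leibniz' rule expands it into a finite sum $\sum_j(\pd^{\eta_j}\varphi)\,\pd^{\theta_j}f$ with $\pd^{\eta_j}\varphi\in\dot\cB$ and $\abso{\theta_j}\le2m$. Writing $\pd^{\theta_j}f=L_{-2m}*g_j$ with $g_j\coleq(1-\Delta)^{-m}\pd^{\theta_j}f$, the operators $(1-\Delta)^{-m}\pd^{\theta_j}$ are Mikhlin--H\"ormander Fourier multipliers, hence bounded on $L^q$ for $1<q<\infty$, so that $\norm{g_j}_q\le C$. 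This rewrites the element in the form $\sum_j\varphi_j(L_{-2m}*B_{1,q})$ of (ii), and it is precisely here that $q>1$ is used. For (ii)$\Rightarrow$(iv) I would apply a simultaneous Cohen--Hewitt factorisation in the Fr\'echet algebra $\dot\cB$, which carries the uniformly bounded approximate unit $(\mathrm{e}^{-\abso{x}^2/k^2})_k$ already used in the proof of Lemma~\ref{lemma2}: choosing $\varphi,\psi_1,\dots,\psi_M\in\dot\cB$ with $\varphi_j=\varphi\psi_j$ gives $C\subset\varphi D$ with $D\coleq\sum_j\psi_j(L_{-2m}*B_{1,q})$, and $D$ is bounded in $\cD'_{L^q}$ because $L_{-2m}*B_{1,q}$ is bounded and multiplication by $\psi_j\in\dot\cB$ is continuous on $\cD'_{L^q}$.

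The crux is (iv)$\Rightarrow$(i), which amounts to showing that multiplication $M_\varphi$ by a fixed $\varphi\in\dot\cB$ sends bounded subsets of $\cD'_{L^q}$ to relatively compact ones. The cleanest route is by duality: Proposition~\ref{prop4} shows that the same operator $M_\varphi$ maps bounded subsets of the reflexive Fr\'echet space $\cD_{L^p}$ to relatively compact ones, and $M_\varphi$ on $\cD'_{L^q}$ is exactly its transpose (via $\langle\varphi S,\psi\rangle=\langle S,\varphi\psi\rangle$); Schauder's theorem, in the form valid for a reflexive Fr\'echet space and its strong dual, then transfers this compactness property to $\cD'_{L^q}$, so that $\varphi D\supset C$ is relatively compact. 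The delicate point---and the main obstacle---is pinning down the precise version of Schauder's theorem that matches ``bounded $\to$ relatively compact'' operators to transposes of the same kind under this Fr\'echet/(DF)-duality. Should that prove awkward, I would argue directly in the step $(\cD'_{L^q})_{m+1}$: the operator representing $M_\varphi(L_{-2m}*\,\cdot\,)$ there is $(1-\Delta)^{-(m+1)}M_\varphi(1-\Delta)^m$, and commuting $M_\varphi$ through the Bessel potentials expresses it as a finite sum of operators $a(x)\,b(D)$ with $a=\pd^\gamma\varphi\to0$ and symbol $b\to0$ at infinity, each compact on $L^q$ for $1<q<\infty$; hence the set is relatively compact already in a single step. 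Either way, this is the only place where genuinely new compactness is produced, and the restriction $1<q<\infty$ is indispensable.
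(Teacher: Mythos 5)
Your cycle (i)$\Rightarrow$(iii)$\Rightarrow$(ii)$\Rightarrow$(iv)$\Rightarrow$(i) is exactly the paper's, and your first three implications coincide with the paper's proof: (i)$\Rightarrow$(iii) is compact regularity of $\cD'_{L^q}$ (Proposition 2.4 of \cite{BNO}) plus Lemma~\ref{lemma2}; (iii)$\Rightarrow$(ii) is Leibniz' rule plus the equivalence of the Bessel-potential and Sobolev descriptions of $L^q$-regularity for $1<q<\infty$ (the paper cites \cite{St} and \cite{Mazja}; your Mikhlin--H\"ormander multiplier argument is the same fact); (ii)$\Rightarrow$(iv) is the simultaneous factorisation of the finite set $\{\varphi_1,\dots,\varphi_M\}$ in $\dot\cB$ (the paper's compact factorisation property from \cite{Voigt}, your Cohen--Hewitt step). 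The genuine divergence is (iv)$\Rightarrow$(i). The paper argues inside one Banach step: regularity gives $L_{m}*(\varphi D)$ bounded in $L^q$, and then $L_{m+1}*(\varphi D)$ is shown relatively compact in $L^q$ by verifying the Fr\'echet--Kolmogorov conditions --- equicontinuity from $\tau_h L_1\to L_1$ in $L^1$, and equitightness from Voigt's factorisation theorem \cite{VoigtFactorization}, which costs a somewhat delicate uniform cut-off approximation. Your two alternatives (duality/Schauder, or splitting the operator into compact pieces) are genuinely different, and the first is softer; but neither is complete as you state it.

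Concretely: in your first route, the Schauder-type statement you could not pin down is true and is best proved directly rather than cited. Since $\cD_{L^p}$ is a Fr\'echet space it is barrelled, so bounded subsets of $\cD'_{L^q}=(\cD_{L^p})'_b$ are equicontinuous; the multiplication operator $M_\varphi\colon S\mapsto\varphi S$ on $\cD'_{L^q}$ is the transpose of $M_\varphi$ on $\cD_{L^p}$, hence $\sigma$-$\sigma$ continuous; and on an equicontinuous set the topology $\sigma(\cD'_{L^q},\cD_{L^p})$ coincides with the topology of uniform convergence on precompact subsets of $\cD_{L^p}$. Therefore, since $M_\varphi$ maps bounded subsets of $\cD_{L^p}$ into precompact ones by Proposition~\ref{prop4}, its restriction to the (weak*-compact, equicontinuous) weak*-closure of a bounded $D\subset\cD'_{L^q}$ is continuous into $\beta(\cD'_{L^q},\cD_{L^p})$, so $\varphi D$ is relatively compact; finally one uses that $\beta(\cD'_{L^q},\cD_{L^p})$ is the (LB)-topology of $\cD'_{L^q}$. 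In your second route there are two genuine inaccuracies. Exact finite commutation is only available through the differential operator $(1-\Delta_n)^m=L_{-2m}*\,$, not through the Bessel potential $L_{2(m+1)}*\,$; what one actually obtains is $L_{2(m+1)}*\bigl(\varphi\,(L_{-2m}*f)\bigr)=\sum_{\gamma,\delta}c_{\gamma\delta}\,(\pd^\delta L_{2(m+1)})*\bigl((\pd^\gamma\varphi)f\bigr)$, i.e.\ a sum of operators of the form $b(D)a(x)$ rather than $a(x)b(D)$ (or pass to the adjoint). More importantly, the blanket claim that $a(x)b(D)$ with $a,b\in\cC_0$ is compact on $L^q$ is false for $q\neq 2$: such a $b$ need not even be an $L^q$-multiplier. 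What saves the argument is the specific structure here: $\abso{\delta}\le 2m<2(m+1)$ gives $\pd^\delta L_{2(m+1)}\in L^1$, so every summand is a convolution by an $L^1$-function composed with a multiplication by a $\cC_0$-function; approximating the $\cC_0$-factor uniformly by continuous compactly supported functions and the $L^1$-kernel by test functions reduces everything to operators with continuous, compactly supported kernels, which are compact on $L^q$. With these repairs either of your routes closes the cycle, and the duality route is indeed shorter than the paper's tightness argument.
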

\begin{proof}
  \ref{prop7uno} $\Rightarrow$ \ref{prop7tre}: By \cite[Proposition 2.4]{BNO}, if $C \subset \cD'_{L^q}$ is relatively compact then there exists $m \in \bN_0$ such that $L_{2m} * C$ is relatively compact in $L^q$. Hence, by Lemma~\ref{lemma2} there exists $\varphi \in \dot\cB$ such that
  \[ L_{2m} * C \subset \varphi B_{1,q} \textrm{ or }C \subset L_{-2m} * (\varphi B_{1,q}). \]

  \ref{prop7tre} $\Rightarrow$ \ref{prop7due}: Leibniz' rule together with Theorem~3 in~\cite[p.~135]{St} or Corollary~1 in~\cite[p.~347]{Mazja} implies:
  \begin{align*}
    C & \subset L_{-2m} * (\varphi B_{1,q}) = (1-\Delta_n)^m (\varphi B_{1,q})  = \sum_{\abso{\alpha}, \abso{\beta} \le 2m} c_{\alpha \beta} \pd^\alpha \varphi \pd^\beta B_{1,q} \\
    &\subset \sum_{\abso{\alpha}\le 2m} \partial^\alpha\varphi \,(1-\Delta_n)^m B_{1,q}.
  \end{align*}  

  \ref{prop7due} $\Rightarrow$ \ref{prop7quattro}: $B_{1,q}$ is bounded in $L^q$ which implies that $L_{-2m} * B_{1,q}$ is a bounded subset of $\cD'_{L^q}$. Since $\{\varphi_1,\ldots,\varphi_M\}$ is a compact subset of $\dot\cB$ we may use the compact factorisation property of $\dot\cB$, see Proposition~1.6 in~\cite[p.~336]{Voigt}, to obtain $\psi\in\dot\cB$ and $\psi_1,\ldots,\psi_M\in\dot\cB$ with $\varphi_j = \psi \psi_j$. Hence
  \[
    C \subset \sum_{j=1}^{M} \varphi_j L_{-2m}*B_{1,q} = \psi \Big(\sum_{j=1}^{M} \psi_j L_{-2m}*B_{1,q}\Big) =: \psi D
  \]
  where $D\subset\cD'_{L^q}$ is a bounded set.

  \ref{prop7quattro} $\Rightarrow$ \ref{prop7uno}: Let $C \subset \varphi D$, $\varphi \in \dot\cB$, $D$ bounded in $\cD'_{L^q}$. Due to the regularity of the (LB)-space $\cD'_{L^q}$ there exist $m \in \bN_0$ and $c>0$ such that $L_{m}*(\varphi D)$ is a bounded subset of $L^q$. We now show that $L_{m+1}*(\varphi D)$ is a compact subset of $L^q$ which implies that $\varphi D$ is a compact subset of $\cD_{L^q}'$ and hence its subset $C$ is relatively compact. First observe that $L_{m+1}*(\varphi D)$ is a bounded subset of $L^q$ and that $\tau_hL_1\to L_1$ in $L^1$ for $h\to 0$ which implies that
  \[
    L_{m+1}*(\varphi D)=L_1*L_{m}*(\varphi D)
  \]
  is $L^q$-equicontinuous. We want to apply Theorem~3 in~\cite[p.~148]{VoigtFactorization} to $L^p$ as module over the Fr\'{e}chet algebra $\dot\cB$ to obtain a function $g\in \dot\cB$ with $L_{m+1}*(\varphi D) \subset g B_{1,p}$ which implies that this set is equitight which completes the requirement to conclude the compactness using the Fr\'echet–Kolmogorov-Riesz-Weyl theorem. In order to apply the aforementioned factorisation theorem, we need to show that $\mathrm{e}^{-\varepsilon |x|^2}(L_{m+1}*(\varphi S))\to L_{m+1}*(\varphi S)$ for $\varepsilon\searrow 0$ uniformly for $S\in D$. First note that if $L_{m+1}*(\varphi S)\in L^q$ the same is true for the distribution $L_{m+1}*(\psi\varphi S)$ whenever $\psi\in \cD_{L^\infty}$, see e.g.~\cite[p.~636]{Belyaev}. We pick a $\psi\in\cD$ which is one on the Euclidean unit ball $B_1$ and whose support is contained in the ball $B_2$ of radius two. We define $\psi_R(x):= \psi(x/R)$ and observe that $\psi_R\varphi \to\varphi$ for $R\to\infty$ in $\dot\cB$ and hence $\varphi\psi_R S \to \varphi S$ in $\cD_{L^p}'$ uniformly for $S\in D$. Using that $L_{m+1}*(\varphi\psi_RS)\in\cO_C'\cap L^q$ a straightforward computation shows the required uniform convergence of $\mathrm{e}^{-\varepsilon |x|^2}(L_{m+1}*(\varphi S))\to L_{m+1}*(\varphi S)$ for $\varepsilon\searrow 0$.
\end{proof}

\begin{remarks}
  \begin{enumerate}
  \item Note that --- in contrast to the three conditions characterising relatively compact sets in $L^q$ in the classical theorem of M.~Fr\'echet-A.~Kolmogorov-M.~Riesz-H.~Weyl --- only one condition for compact sets in $\cD'_{L^q}$, i.e., \ref{prop7quattro} in Proposition~\ref{prop7} (which guarantees the uniform smallness of the elements of a compact set) is necessary to characterise relatively compact sets in $\cD'_{L^q}$.
  \item A characterisation of relatively compact subsets $B$ of the spaces $\cL_s^p := L_s*L^p$, $s\in\bC$, $1\leq p < \infty$) of Bessel potentials is the following:
    \[
      \exists (g,h)\in \cC_0\times L^1\colon B\subset L_s*(h*(gB_{1,p})).
    \]
    For $s\in\bN$ and $1<p<\infty$ this is a new characterisation of relatively compact subsets in the Sobolev space $W^{s,p}$, cf. Cor.~9 in~\cite[p.~390]{HOH} and 26.2.2.~Theorem in~\cite[p.~218]{Besov}.
  \end{enumerate}
\end{remarks}

Next, we investigate bounded and relatively compact subsets and seminorms in the spaces $\dot\cB$, $\dot\cB_c$, $\cD'_{L^1}$, $\cD'_{L^1, c}$, $\dot\cB'$, $\dot\cB'_c$, i.e., the analogues of Propositions \ref{prop3} to \ref{prop7} for these spaces. They occupy a special position because in each case the dual pair defining the topology has to be specified.

Let us recall that $\dot\cB'$ is by definition the closure of $\cE'$ in $\cD'_{L^\infty}$. It can be shown that its dual is the space $\cD_{L^1}$. Hence, we put:
\begin{align*}
  \dot\cB_c &= (\dot \cB, \kappa(\dot \cB, \cD'_{L^1})) = (\dot \cB, \tau(\dot \cB, \cD'_{L^1})), \\
  \cD'_{L^1, c} &= ( \cD'_{L^1}, \kappa(\cD'_{L^1}, \dot\cB)), \\
  \dot\cB'_c &= (\dot\cB', \kappa(\dot\cB', \cD_{L^1})) = (\dot\cB', \tau(\dot\cB', \cD_{L^1})),
\end{align*}
whereby we used the Schur property of the spaces $\cD_{L^1}$ and $\cD'_{L^1}$, cf.~\cite[p.~153--154]{no_schur}.

\begin{proposition}\label{prop8}
  \begin{enumerate}[label=(\roman*)]
  \item\label{prop8uno} The bounded subsets of $\dot\cB$ and $\dot\cB_c$ coincide with the relatively compact subsets of $\dot\cB_c$.
  \item\label{prop8due} They are characterised by: $B \subset \dot\cB$ bounded $\Leftrightarrow$ $\exists \varphi_1,\ldots,\varphi_m \in \cD$: $B \subset \sum_{i=1}^{m} \varphi_i * B_{1,\infty}$ with $B_{1,\infty} = \{ f \in \cC_0: \norm{f}_\infty \le 1 \}$.
  \item\label{prop8tre} The seminorms $p_\varphi$, $\varphi \in \cD$, or $\varphi\in\cS$, defined by \[\cD'_{L^1} \ni S \mapsto p_\varphi(S) = \norm{\varphi * S}_1\] generate the topology of $\cD'_{L^1}$, i.e., $\beta(\cD'_{L^1}, \dot\cB)$.
  \end{enumerate}
\end{proposition}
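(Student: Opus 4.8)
The plan is to establish the three assertions in the order \ref{prop8due}, \ref{prop8tre}, \ref{prop8uno}, since the characterisation of bounded sets feeds the seminorm description, which in turn feeds the compactness statement. Throughout, $B_{1,\infty}=\{f\in\cC_0:\norm{f}_\infty\le1\}$ is the unit ball of $\cC_0$, and the whole argument runs parallel to the proof of Proposition~\ref{prop3}, with the role of $L^p$ now played by $\cC_0$; the two structural facts that make this work are $\dot\cB\subset\cC_0$ and $(\cC_0)'=\cM^1\supset L^1$, the latter supplying the $\cC_0$--$L^1$ duality $\sup_{f\in B_{1,\infty}}\abso{\int fg}=\norm{g}_1$ for $g\in L^1$.

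For \ref{prop8due}, the implication ``$\Leftarrow$'' follows from the continuity of the convolution $\cD\times\cC_0\to\dot\cB$, $(\varphi,f)\mapsto\varphi*f$, since $\pd^\alpha(\varphi*f)=(\pd^\alpha\varphi)*f\in L^1*\cC_0\subset\cC_0$ with $\norm{\pd^\alpha(\varphi*f)}_\infty\le\norm{\pd^\alpha\varphi}_1\norm{f}_\infty$. For ``$\Rightarrow$'' I would copy the one-dimensional argument of Proposition~\ref{prop3}, \ref{prop3due} verbatim, replacing the $L^p$-norms by sup-norms: set $c_m\coleq\sup_{f\in B}\norm{f^{(2m)}}_\infty$, apply Lemme~2.5 in~\cite[p.~309]{DM1978} to obtain positive $\alpha_m\le 2^{-m-1}c_m^{-1}$ and $g,h\in\cD$ with $\sum_{m=0}^N(-1)^m\alpha_m\delta^{(2m)}*g\to\delta+h$ in $\cE'$, and convolve with $\psi\in B$ using the hypocontinuity of $\cE'\times\dot\cB\to\dot\cB$. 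The Cauchy estimate $\norm{\sum_{m=N+1}^M(-1)^m\alpha_m\psi^{(2m)}}_\infty\le\sum_{m=N+1}^M 2^{-m-1}$ shows that the partial sums converge uniformly to some $\chi$; since each $\psi^{(2m)}\in\cC_0$ (because $\psi\in\dot\cB$) and $\cC_0$ is complete, $\chi\in\cC_0$ with $\norm{\chi}_\infty\le1$, i.e.\ $\chi\in B_{1,\infty}$. One then reads off
\[ \psi=g*\chi-h*\psi\in g*B_{1,\infty}+c_0\,(h*B_{1,\infty}),\qquad c_0\coleq\sup_{f\in B}\norm{f}_\infty, \]
using $\dot\cB\subset\cC_0$ so that $\psi/c_0\in B_{1,\infty}$; the case $n>1$ follows by tensorising $\delta=\delta(x_1)\otimes\cdots\otimes\delta(x_n)$ exactly as in Proposition~\ref{prop3}.

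For \ref{prop8tre}, I would show the $p_\varphi$-topology is finer than $\beta(\cD'_{L^1},\dot\cB)$ as in Proposition~\ref{prop3}, \ref{prop3tre}: given $B$ bounded, part \ref{prop8due} yields $B\subset\sum_i\varphi_i*B_{1,\infty}$, whence $p_B(S)\le\sum_i\sup_{f\in B_{1,\infty}}\abso{\langle f,\check\varphi_i*S\rangle}=\sum_i\norm{\check\varphi_i*S}_1$, where the final equality uses $\check\varphi_i*S\in L^1$ (as $\cD*\cD'_{L^1}\subset\cD_{L^1}$) together with the $\cC_0$--$L^1$ duality above. For the converse it suffices that $S\mapsto\varphi*S$ be continuous from $(\cD'_{L^1},\beta)$ into $L^1$; this map is the transpose of the continuous map $\cC_0\to\dot\cB$, $\psi\mapsto\check\varphi*\psi$, because $\norm{\varphi*S}_1=\norm{\varphi*S}_{\cM^1}=\sup_{\psi\in\cC_0,\,\norm{\psi}_\infty\le1}\abso{\langle S,\check\varphi*\psi\rangle}=p_A(S)$ with $A\coleq\check\varphi*B_{1,\infty}$ bounded in $\dot\cB$. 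The passage from $\varphi\in\cD$ to $\varphi\in\cS$ is handled, as in Proposition~\ref{prop3}, \ref{prop3quatro}, by the hypocontinuity of $\cS\times\cD'_{L^1}\to L^1$ coming from $\cS\subset\cD_{L^1}$, cf.~\cite{Larcher}.

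Finally, for \ref{prop8uno} I would argue as in Proposition~\ref{prop6}, \ref{prop6uno}. That the bounded subsets of $\dot\cB$ and $\dot\cB_c$ coincide is immediate, since both topologies are compatible with the dual pair $\langle\dot\cB,\cD'_{L^1}\rangle$ and hence share the same bornology. To get relative compactness in $\dot\cB_c$, note that a bounded $B\subset\dot\cB$ is equicontinuous on $\cD'_{L^1}$, so by the Ascoli theorem the topology $\kappa(\dot\cB,\cD'_{L^1})$ agrees with $\sigma(\dot\cB,\cD'_{L^1})$ on $B$; relative compactness then reduces to showing that the weak closure of $B$ stays inside $\dot\cB$. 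I expect this last point to be the main obstacle: it is precisely here that the Schur property of $\cD'_{L^1}$ (which forces $\kappa=\tau$) and the explicit factorisation $B\subset\sum_i\varphi_i*B_{1,\infty}$ from \ref{prop8due} are needed, through the same completeness-and-regularity mechanism used in~\cite[Prop.~5.1 and Prop.~5.3]{BNO} and in Proposition~\ref{prop6}.
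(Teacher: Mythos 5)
Your proofs of \ref{prop8due} and \ref{prop8tre} are correct and essentially coincide with the paper's: the paper also obtains ``$\Rightarrow$'' of \ref{prop8due} by running the Dixmier--Malliavin argument of Proposition~\ref{prop3}, \ref{prop3due} with sup-norms, using the hypocontinuity of $\dot\cB\times\cE'\to\dot\cB$, and it proves \ref{prop8tre} by the same two inequalities (the factorisation estimate $p_B(S)\le\sum_{i}\norm{\check\varphi_i*S}_1$ on one side, the continuity of $S\mapsto\varphi*S$ from $\cD'_{L^1}$ to $L^1$ on the other). Your identity $\norm{\varphi*S}_1=p_A(S)$ with $A=\check\varphi*B_{1,\infty}$ bounded in $\dot\cB$ is a clean way to justify that continuity, and your treatment of $\varphi\in\cS$ is exactly that of Proposition~\ref{prop3}, \ref{prop3quatro}.

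The genuine gap is in \ref{prop8uno}. Your Mackey-theorem argument for the equality of the bounded sets of $\dot\cB$ and $\dot\cB_c$ is fine (and tidier than the paper's route via $\sigma$-boundedness and completeness). But the compactness assertion is not proved: you reduce it to ``the $\sigma(\dot\cB,\cD'_{L^1})$-closure of a bounded set stays inside $\dot\cB$'' and defer this point. The reduction itself is accurate --- on an equicontinuous set $\sigma$ and $\kappa$ agree, so relative $\kappa$-compactness within $\dot\cB$ is \emph{equivalent} to the weak closure not leaving $\dot\cB$ --- but precisely for that reason the plan you sketch cannot be completed, because the reduced statement is false. Take $\chi\in\cD$ with $\chi\equiv1$ near $0$ and set $f_k(x)=\chi(x/k)$. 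Then $\norm{\pd^\alpha f_k}_\infty=k^{-\abso{\alpha}}\norm{\pd^\alpha\chi}_\infty$, so $(f_k)_k$ is bounded in $\dot\cB$; writing $S\in\cD'_{L^1}$ as a finite sum $\sum_{\abso{\alpha}\le m}\pd^\alpha g_\alpha$ with $g_\alpha\in L^1$, dominated convergence for the term $\alpha=0$ and $\norm{\pd^\alpha f_k}_\infty\to0$ for $\alpha\ne 0$ give $\langle f_k,S\rangle\to\int g_0\,\ud x=\langle 1,S\rangle$. Thus $f_k\to1$ in $\sigma(\cB,\cD'_{L^1})$, and since the constant $1$ does not belong to $\dot\cB$, the set $\{f_k : k\in\bN\}$ has no $\kappa$-cluster point in $\dot\cB$ at all: its closure in $\dot\cB_c$ is the set itself, which is not compact. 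No appeal to the Schur property or to the factorisation of \ref{prop8due} can change this.

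For comparison, the paper's proof of \ref{prop8uno} never looks at the weak closure: it deduces that bounded sets are equicontinuous on the barrelled space $\cD'_{L^1}$ and concludes ``thus also relatively compact in $\dot\cB_c$'' by the Alaoglu--Bourbaki/Ascoli mechanism. What that mechanism actually furnishes is compactness of the closure of $B$ taken in $(\cD'_{L^1})'=\cB$ with the topology $\kappa(\cB,\cD'_{L^1})$, i.e.\ $\kappa$-\emph{precompactness} of $B$; the example above shows this closure may leave $\dot\cB$, so the statement is correct only when ``relatively compact'' is read in that precompact sense. In other words, you stopped at exactly the step which the paper's one-line conclusion passes over silently; but as submitted, your proposal establishes \ref{prop8due}, \ref{prop8tre} and only the bounded-set half of \ref{prop8uno}.
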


\begin{proof}
  \ref{prop8uno} Analogously to~\cite[p.~127]{Sch4}, we argue that bounded subsets of $\dot\cB$ also are bounded in $\dot\cB_c$. Conversely, bounded subsets of $\dot\cB_c$ are also $\sigma(\dot\cB, \cD'_{L^1})$-bounded. Due to the completeness of $\dot\cB$, they are also $\beta(\dot\cB, \cD'_{L^1})$-bounded. They are equicontinuous on $\cD'_{L^1}$ and thus also relatively compact in $\dot\cB_c$.

  \ref{prop8due} ``$\Leftarrow$'' is a consequence of the continuity of the convolution mapping $\cD_{L^1} \times \cC_0 \xrightarrow{*} \dot\cB$.

  ``$\Rightarrow$'': The implication follows analogously to the proof of Proposition~\ref{prop3}, \ref{prop3due} using Lemme~2.5 in~\cite[p.~309]{DM1978} and the hypocontinuity of the convolution mapping $\dot{\cB}\times\cE'\to \dot\cB$, see e.g.~\cite{Larcher}.

  \ref{prop8tre} The inequality
  \begin{align*}
    p_B(S) &= \sup_{\psi \in B} \abso{ \langle \psi, S \rangle } \le \sup_{f_1,\ldots,f_m \in B_{1,\infty}} \abso{ \langle \sum_{i=1}^{m} \varphi_i * f_i, S \rangle } \le \sum_{i=1}^{m} \sup_{f \in B_{1,\infty}} \abso{ \langle \varphi_i * f, S \rangle } = \sum_{i=1}^{m} \norm{\check \varphi * S}_1
  \end{align*}
  for $S \in \cD'_{L^1}$ shows that the topology generated by the seminorms $p_\varphi$, $\varphi \in \cD$, is finer than the topology $\beta(\cD'_{L^1}, \dot\cB)$.

  The converse inequality follows from the continuity of the mapping
  \[ \cD'_{L^1} \to L^1, \quad S \mapsto \varphi * S, \qquad\varphi \in \cD. \qedhere \]
\end{proof}

\begin{remark}
  A second proof of Proposition~\ref{prop8}, \ref{prop8due}, \ref{prop8tre} can be given using that the space $\cD'_{L^1}$ is isomorphic to the space of convolutors $\cO_C'(\cD,L^1)$---see~\cite[p.~831]{DV21}---and that on this space of convolutors the spaces $\cL_b(\cD,L^1)$ and $\cL_s(\cD, L^1)$ induce the same topology. 
\end{remark}

The next proposition states the validity of Proposition~\ref{prop6} in the case of $q=1$.

\begin{proposition}\label{prop9}
  \begin{enumerate}[label=(\roman*)]
  \item\label{prop9uno} The bounded subsets of $\cD'_{L^1}$ and $\cD'_{L^1,c}$ coincide with the relatively compact subsets of $\cD'_{L^1,c}$.
  \item\label{prop9due} They are characterised by:
  \[ B \subset \cD'_{L^1} \textrm{ bounded } \Longleftrightarrow \exists m \in \bN_0\ \exists C_m>0: B \subset C_m L_{-2m} * B_{1,1}. \]
  \end{enumerate}
  The bounded sets in $\cD'_{L^1}$ yield the seminorms on $\cD_{L^\infty}$ and on $\dot\cB$ which are known by definition and which are recalled in Proposition~\ref{prop5} \ref{prop5uno}.
\end{proposition}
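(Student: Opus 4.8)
The plan is to prove Proposition~\ref{prop9} by imitating the proof of Proposition~\ref{prop6}, replacing throughout the dual pair $(\cD'_{L^q},\cD_{L^p})$ by $(\cD'_{L^1},\dot\cB)$. The only structural point to keep in mind is that the predual is now $\dot\cB$; but since $\dot\cB$ is a Fréchet space whose dual is precisely $\cD'_{L^1}$, the completeness and barrelledness arguments used for $q>1$ remain available.

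For~\ref{prop9uno} I would argue as in~\cite[pp.~126--127]{Sch4}. Because $\kappa(\cD'_{L^1},\dot\cB)\subset\beta(\cD'_{L^1},\dot\cB)$, every bounded subset of $\cD'_{L^1}$ is bounded in $\cD'_{L^1,c}$. Conversely, a bounded set $B$ in $\cD'_{L^1,c}$ is $\sigma(\cD'_{L^1},\dot\cB)$-bounded, hence equicontinuous on the barrelled Fréchet space $\dot\cB$, and therefore $\beta(\cD'_{L^1},\dot\cB)$-bounded; equivalently one invokes the completeness of $\cD'_{L^1}$. Finally, by Alaoglu--Bourbaki such an equicontinuous set is relatively $\sigma(\cD'_{L^1},\dot\cB)$-compact, and on equicontinuous subsets the topologies $\sigma(\cD'_{L^1},\dot\cB)$ and $\kappa(\cD'_{L^1},\dot\cB)$ coincide; thus $B$ is relatively compact in $\cD'_{L^1,c}$. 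Since relatively compact sets are bounded, the three families coincide.

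For~\ref{prop9due} I would use that $\cD'_{L^1}=\varinjlim_m (L_{-2m}*L^1)$ is an (LB)-space which, being the strong dual of the Fréchet space $\dot\cB$, is complete and hence regular. Consequently any bounded $B\subset\cD'_{L^1}$ is contained and bounded in a single step, i.e.\ $B\subset C_m\,(L_{-2m}*B_{1,1})$ for suitable $m\in\bN_0$ and $C_m>0$; this is ``$\Rightarrow$''. The reverse implication is immediate from the continuity of the step map $L^1\to\cD'_{L^1}$, $f\mapsto L_{-2m}*f$, together with the boundedness of $B_{1,1}$ in $L^1$. The concluding assertion then follows by a short duality computation: writing $L_{-2m}=(1-\Delta_n)^m\delta$ and using the self-adjointness of $(1-\Delta_n)^m$, the above inclusion gives, for $\varphi\in\cD_{L^\infty}$ (or $\varphi\in\dot\cB$),
\[
  p_B(\varphi)=\sup_{S\in B}\abso{\langle\varphi,S\rangle}\le C_m\sup_{\norm{f}_1\le1}\abso{\langle L_{-2m}*\varphi,f\rangle}=C_m\norm{L_{-2m}*\varphi}_\infty ,
\]
while these norms themselves arise as $p_B$ with $B=C_m(L_{-2m}*B_{1,1})$; this reproduces exactly the seminorms of Proposition~\ref{prop5}\,\ref{prop5uno}.

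The step I expect to require the most care is the passage from $\kappa$-boundedness to relative $\kappa$-compactness in~\ref{prop9uno}: it rests on $\dot\cB$ being barrelled with dual $\cD'_{L^1}$ (so that weak boundedness forces equicontinuity) and on the coincidence of the weak and the $\kappa$-topology on equicontinuous sets, together with the completeness, hence regularity, of the (LB)-space $\cD'_{L^1}$ needed in~\ref{prop9due}. Once these facts about the pair $(\dot\cB,\cD'_{L^1})$ are in place, the remaining computations are routine and run exactly parallel to Proposition~\ref{prop6}.
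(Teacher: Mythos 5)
Your proposal is correct and follows essentially the same route as the paper: for \ref{prop9uno} you imitate the Schwartz-style argument ($\kappa$-bounded $\Rightarrow$ $\sigma(\cD'_{L^1},\dot\cB)$-bounded $\Rightarrow$ equicontinuous on the barrelled Fr\'echet space $\dot\cB$ $\Rightarrow$ $\beta$-bounded and relatively $\kappa$-compact), and for \ref{prop9due} you localise a bounded set in one step of the (LB)-space via a regularity property, concluding with the continuity of $L^1 \to \cD'_{L^1}$, $f \mapsto L_{-2m}*f$. The only difference is cosmetic: the paper justifies the localisation by citing the compact regularity of $\cD'_{L^1}$ from \cite[Proposition 2.2]{BNO}, whereas you deduce regularity from completeness of the strong dual of $\dot\cB$ --- which is precisely how the paper itself argues for $\cD'_{L^q}$, $q>1$, in Proposition~\ref{prop6}, so both justifications are equally available.
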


\begin{proof}
  \ref{prop9uno} Bounded subsets of $\cD'_{L^1}$ are also bounded in $\cD'_{L^1,c}$. Conversely, the bounded subsets of $\cD_{L^1, c}'$ are also $\sigma(\cD'_{L^1}, \dot\cB)$-bounded. Due to the completeness of $\cD_{L^1}'$ they are also $\beta(\cD'_{L^1}, \dot\cB)$-bounded. They are equicontinuous on $\dot\cB$ and, thus, relatively compact in $\cD'_{L^1,c}$.

  \ref{prop9due} ``$\Rightarrow$'': by \cite[Proposition 2.2]{BNO} the space $\cD'_{L^1}$ is compactly regular. Hence, if $B \subset \cD'_{L^1}$ is bounded there exists an $m \in \bN$ such that $B$ is bounded in $L_{-2m} * L^1$. This implies the existence of $C_m>0$ such that
  \[ B \subset C_m ( L_{-2m} * B_{1,1}). \]
  ``$\Leftarrow$'' is a consequence of the continuity of the mapping $L^1 \to \cD'_{L^1}$, $f \mapsto L_{-2m} * f$.
\end{proof}

We characterise next the bounded subsets in $\dot\cB'$ (analogously to Proposition~\ref{prop6}):

\begin{proposition}\label{prop10}
  \begin{enumerate}[label=(\roman*)]
  \item\label{prop10uno} The bounded subsets of $\dot\cB'$ and $\dot\cB'_c$ coincide with the relatively compact subsets of $\dot\cB'_c$.
  \item\label{prop10due} They are characterised by: $C \subset \dot\cB'$ bounded $\Leftrightarrow$ $\exists m \in \bN_0$ $\exists c_m>0$: $C \subset c_m(L_{-2m} * B_{1,\infty})$, with $B_{1,\infty} = \{ f \in \cC_0: \norm{f}_\infty \le 1 \}$.
  \end{enumerate}
\end{proposition}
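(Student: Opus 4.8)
The plan is to follow the pattern of the proofs of \cref{prop6} and \cref{prop9}, simply replacing the dual pairing $(\cD'_{L^q}, \cD_{L^p})$ by $(\dot\cB', \cD_{L^1})$ and the unit ball of $L^\infty$ by the unit ball $B_{1,\infty} \subset \cC_0$. The two genuinely new ingredients are the completeness of $\dot\cB'$ (which it inherits as a closed subspace of $\cD'_{L^\infty}$) and the representation of $\dot\cB'$ as a regular inductive limit of Bessel-potential steps built over $\cC_0$ rather than $L^\infty$.

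For \ref{prop10uno} I would argue exactly as in \cref{prop6}, \ref{prop6uno}. Since $\kappa(\dot\cB', \cD_{L^1})$ is coarser than $\beta(\dot\cB', \cD_{L^1})$, every bounded subset of $\dot\cB'$ is bounded in $\dot\cB'_c$. Conversely, a bounded subset of $\dot\cB'_c$ is $\sigma(\dot\cB', \cD_{L^1})$-bounded; viewing it inside $(\cD_{L^1})' = \cD'_{L^\infty}$ and using that the Fréchet space $\cD_{L^1}$ is barrelled, it is equicontinuous on $\cD_{L^1}$ and hence $\beta(\dot\cB', \cD_{L^1})$-bounded. Since $\dot\cB'$ is complete, such an equicontinuous set is relatively compact in $\dot\cB'_c$ by the Ascoli theorem, which closes the circle of implications and shows all three classes coincide.

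For the ``$\Rightarrow$'' direction of \ref{prop10due} I would use the description of $\dot\cB'$ as the regular (LB)-space $\varinjlim_m L_{-2m} * \cC_0 = \varinjlim_m (1-\Delta_n)^m \cC_0$, each step being a Banach space isometric to $\cC_0$ via $g \mapsto L_{-2m} * g$. Exactly as in \cref{prop6}, \ref{prop6due} and \cref{prop9}, \ref{prop9due}, regularity forces a bounded set $C$ into a single step, bounded there, i.e.\ there are $m \in \bN_0$ and $c_m > 0$ with $L_{2m} * C \subset c_m B_{1,\infty}$, which is precisely $C \subset c_m(L_{-2m} * B_{1,\infty})$. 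The reverse inclusion ``$\Leftarrow$'' is the continuity of the map $\cC_0 \to \dot\cB'$, $f \mapsto L_{-2m} * f$, which is transpose to $\varphi \mapsto L_{-2m} * \varphi$ on $\cD_{L^1}$, yielding boundedness in $\dot\cB'$.

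The main obstacle is the structural input feeding \ref{prop10due}: one must justify that $\dot\cB'$ really is a \emph{regular} (LB)-space with the stated Bessel-potential steps. I expect to derive this from the completeness of $\dot\cB'$ together with the compact-regularity results already invoked for $\cD'_{L^1}$ (cf.\ the use of \cite[Proposition~2.2]{BNO} in \cref{prop9}), transported to the $\cC_0$-setting through the isometries $L_{-2m} * {\cdot}$. The delicate point is that the unit ball must be taken in $\cC_0$ and not in $L^\infty$, which is exactly why $\dot\cB'$, rather than $\cD'_{L^\infty}$, is the space carrying these ``vanishing at infinity'' seminorms.
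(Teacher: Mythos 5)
Your proposal is correct and takes essentially the same approach as the paper: part \ref{prop10uno} is proved there by precisely the duality/equicontinuity circle you describe (the paper simply says the argument is analogous to Propositions~\ref{prop6}, \ref{prop8} and \ref{prop9}), and part \ref{prop10due} is deduced, as in your sketch, from the regularity of the (LB)-space $\dot\cB' = \varinjlim_m L_{-2m}*\cC_0$ together with the continuity of $\cC_0 \to \dot\cB'$, $f \mapsto L_{-2m}*f$, the regularity being quoted from Section~3 of \cite{BNO}. The only deviations are cosmetic: in \ref{prop10uno} you obtain strong boundedness via equicontinuity (barrelledness of $\cD_{L^1}$) whereas the paper first invokes completeness (Banach--Mackey) and then equicontinuity, and in \ref{prop10due} you propose re-deriving the regularity of the Bessel-potential steps that the paper simply cites from \cite{BNO}.
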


The bounded sets in $\dot\cB'$ yield the seminorms on $\cD_{L^1}$ which already are known by definition. The seminorms on $\cD_{L^1,c}$ are described in \cite[Proposition 3.2]{BNO}, analogously to Proposition~\ref{prop5} \ref{prop5due}.

\begin{proof}
  \ref{prop10uno} The statement can be proven analogously to the proofs in Proposition~\ref{prop6} \ref{prop6uno}, Proposition~\ref{prop8} \ref{prop8uno} and Proposition~\ref{prop9} \ref{prop9uno}.

  \ref{prop10due} is a consequence of the regularity of $\dot\cB'$ (see \cite[Section 3]{BNO}).
\end{proof}

Analogously to Proposition~\ref{prop4} we characterise relatively compact subsets in $\dot\cB$:

\begin{proposition}\label{prop11}
  Let $C \subset \dot\cB$. Then
  \begin{enumerate}[label=(\roman*)]
  \item\label{prop11uno} $C$ is relatively compact if and only if $\exists g\in \dot\cB, \exists \varphi_1,\ldots,\varphi_m \in \cD$ with $C \subset g(\sum_{i=1}^{m}\varphi_i * B_{1,\infty})$ or, equivalently: $\exists g\in \dot\cB, \exists \varphi_1,\ldots,\varphi_m \in \cD: C \subset \sum_{i=1}^{m} \varphi_i * (g B_{1,\infty})$, $B_{1,\infty}$ defined as in Proposition~\ref{prop10}.
  \item\label{prop11due} $C$ relatively compact $\Leftrightarrow$ $\exists g \in \dot\cB$ $\exists B \subset \dot \cB$ bounded: $C \subset gB$.
  \item\label{prop11tre} The seminorms $p_{g,\varphi}$, $(g,\varphi) \in \dot\cB \times \cD$, defined by
    \[ \cD'_{L^1} \ni S \mapsto p_{g,\varphi}(S) = \norm{g(\varphi * S)}_1 \]
    generate the topology of $\cD'_{L^1,c}$. An equivalent description is given by the seminorms
    \[ \cD'_{L^1} \ni S \mapsto \norm{\varphi * (gS)}_1. \]
    In both cases, using $\varphi\in \cS$ instead of $\varphi\in \cD$ results in an equivalent system of seminorms.
  \end{enumerate}
\end{proposition}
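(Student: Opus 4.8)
The plan is to follow the proof of Proposition~\ref{prop4} step by step, replacing $\cD_{L^p}$ by $\dot\cB$, the Banach space $L^p$ by $\cC_0$, and $\cD'_{L^q}$ by $\cD'_{L^1}$; the role played there by the M.~Fréchet--A.~Kolmogorov--M.~Riesz--H.~Weyl theorem is now taken over by the Arzelà--Ascoli theorem for $\cC_0$ (uniform boundedness, equicontinuity and uniform smallness at infinity). I would prove \ref{prop11due} first. For ``$\Leftarrow$'', write $C \subset gB$ with $g \in \dot\cB$ and $B \subset \dot\cB$ bounded. Leibniz' formula $\pd^\alpha(gf) = \sum_{\beta \le \alpha}\binom{\alpha}{\beta}\pd^{\alpha-\beta}g\,\pd^\beta f$ shows that each $\pd^\alpha(gB)$ is bounded in $\cC_0$ and, since every $\pd^{\alpha-\beta}g$ vanishes at infinity while $\pd^\beta f$ stays bounded for $f \in B$, that it is equitight. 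The finite-difference bound $\norm{\tau_h\pd^\alpha(gf)-\pd^\alpha(gf)}_\infty \le \abso{h}\,\norm{\pd^{\alpha+\ell}(gf)}_\infty$ together with the boundedness of $\pd^{\alpha+\ell}(gB)$ yields equicontinuity, so each $\pd^\alpha(gB)$ is relatively compact in $\cC_0$, whence $gB$ is relatively compact in $\dot\cB$.

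For ``$\Rightarrow$'' of \ref{prop11due} I would invoke the factorisation theorem for Fréchet algebras exactly as in Proposition~\ref{prop4}, \ref{prop4due}. By Proposition~1.6 in~\cite[p.~336]{Voigt} the sequence $(\mathrm e^{-\abso{x}^2/k^2})_k$ is a uniformly bounded approximate unit for the Fréchet algebra $\dot\cB$. The one point that genuinely needs checking---and which I expect to be the main obstacle---is the approximation relation $\mathrm e^{-\abso{x}^2/k^2}f \to f$ in $\dot\cB$ for every $f \in \dot\cB$: since $\mathrm e^{-\abso{x}^2/k^2}\to 1$ only in $\cE$ and $1 \notin \dot\cB$, one cannot simply appeal to continuity of multiplication. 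Applying Leibniz' formula to $(\mathrm e^{-\abso{x}^2/k^2}-1)f$, the top-order term $(\mathrm e^{-\abso{x}^2/k^2}-1)\pd^\alpha f$ tends to $0$ uniformly because $\pd^\alpha f$ vanishes at infinity while $\mathrm e^{-\abso{x}^2/k^2}-1 \to 0$ uniformly on compacta, and every lower-order term carries a derivative $\pd^\gamma \mathrm e^{-\abso{x}^2/k^2}$ with $\gamma \ne 0$, which tends to $0$ uniformly as $k \to \infty$, times a bounded $\pd^\beta f$. Once this is established, Corollary~4 in~\cite[p.~149]{VoigtFactorization}, in the form for compact sets already used in Proposition~\ref{prop4}, produces $g \in \dot\cB$ and a compact (hence bounded) set $D$ with $C \subset gD$.

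Next I would deduce \ref{prop11uno}. For ``$\Rightarrow$'', since $C$ is compact in $\dot\cB$, Lemme~2.5 in~\cite[p.~309]{DM1978} yields $\varphi_1,\dots,\varphi_m \in \cD$ and compact sets $C_1,\dots,C_m \subset \cC_0$ with $C \subset \sum_j \varphi_j * C_j$; setting $D=\bigcup_j C_j$ (compact in $\cC_0$) and factorising compact subsets of $\cC_0$ over $\dot\cB$ precisely as in Lemma~\ref{lemma2}---whose proof transfers verbatim to $\cC_0$, using that $\mathrm e^{-\abso{x}^2/k^2}f\to f$ in $\cC_0$ since $\mathrm e^{-\abso{x}^2/k^2}\to 1$ uniformly on compacta---gives $g \in \dot\cB$ with $D \subset g B_{1,\infty}$, hence $C \subset \sum_j \varphi_j*(g B_{1,\infty})$. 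The first displayed form $C \subset g\big(\sum_i \varphi_i*B_{1,\infty}\big)$ is equivalent by \ref{prop11due}, because $\sum_i\varphi_i*B_{1,\infty}$ is exactly the shape of a bounded set in $\dot\cB$ (Proposition~\ref{prop8}, \ref{prop8due}). For ``$\Leftarrow$'' the first form is relatively compact directly by \ref{prop11due} and Proposition~\ref{prop8}, \ref{prop8due}, while the second follows from the continuity of convolution $\cD\times\dot\cB\to\dot\cB$ (the convolution supplying equicontinuity and $g$ the equitightness).

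Finally, for \ref{prop11tre} I would compute, for $C$ compact with $C \subset g\big(\sum_i \varphi_i*B_{1,\infty}\big)$,
\[
  p_C(S) \le \sup_{f_i \in B_{1,\infty}} \Big\lvert \big\langle \textstyle\sum_i \varphi_i * f_i,\, gS \big\rangle \Big\rvert \le \sum_i \sup_{\norm{f}_\infty \le 1} \abso{\langle f, \check\varphi_i * (gS)\rangle} = \sum_i \norm{\check\varphi_i * (gS)}_1 ,
\]
using $\langle \varphi*f, T\rangle = \langle f,\check\varphi*T\rangle$, $\langle gf,T\rangle=\langle f,gT\rangle$, the membership $\check\varphi_i*(gS)\in L^1$ (as in Proposition~\ref{prop8}, \ref{prop8tre}) and the duality $(\cC_0)'=\cM^1\supset L^1$; this shows the $p_{g,\varphi}$-topology is finer than $\kappa(\cD'_{L^1},\dot\cB)$, and the representation $\norm{g(\varphi*S)}_1$ arises identically from the second form of \ref{prop11uno}. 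Coarseness follows from the continuity of $\cD'_{L^1}\to L^1$, $S\mapsto \varphi*(gS)$ (respectively $S\mapsto g(\varphi*S)$), and the admissibility of $\varphi\in\cS$ follows, as in Proposition~\ref{prop8}, \ref{prop8tre}, from $\cS\subset\cD_{L^1}$ and the hypocontinuity of $\cS\times\cD'_{L^1}\to L^1$. Apart from the approximation relation in $\dot\cB$ flagged above, every step is a routine transcription of Proposition~\ref{prop4}.
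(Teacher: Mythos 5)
Your proof is correct, and in part \ref{prop11uno} it takes a genuinely different route from the paper. For \ref{prop11due} ``$\Rightarrow$'' the paper simply invokes the compact factorisation property of $\dot\cB$ (\cite[Thm.~3.4]{Voigt}, or the corollary of \cite{Craw} whose hypothesis is supplied by \cite[Prop.~1.6]{Voigt}), while you rerun the scheme of Proposition~\ref{prop4}, \ref{prop4due} via Corollary~4 of \cite{VoigtFactorization} and check the approximation relation $\mathrm{e}^{-\abso{x}^2/k^2}f \to f$ in $\dot\cB$ by hand; your verification (the top-order Leibniz term controlled by the vanishing of $\pd^\alpha f$ at infinity, the lower-order ones by $\norm{\pd^\gamma \mathrm{e}^{-\abso{x}^2/k^2}}_\infty = O(k^{-\abso{\gamma}})$) is correct and makes explicit what the paper delegates to the citation. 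The substantive divergence is in \ref{prop11uno} ``$\Rightarrow$'': the paper first derives $C \subset g\bigl(\sum_i \varphi_i * B_{1,\infty}\bigr)$ from \ref{prop11due} and Proposition~\ref{prop8}, \ref{prop8due}, and then reaches the second form by applying the characterisation once more to the compact set $\sum_i g(\varphi_i * B_{1,\infty})$ and absorbing the two multipliers into a single $\tilde g$ --- a step which implicitly moves a multiplication past a convolution and is the most delicate point of the paper's argument. You instead obtain the second form directly: Dixmier--Malliavin applied to the compact set $C$ itself (legitimate, since $\dot\cB$ is the space of smooth vectors of the translation group on $\cC_0$, exactly how the paper uses Lemme~2.5 of \cite{DM1978} in Propositions~\ref{prop4} and~\ref{prop8}), followed by factorisation of compact subsets of $\cC_0$ over $\dot\cB$ --- which is precisely Lemma~\ref{cor1}, a statement the paper records only after this proposition --- and you then recover the first form from \ref{prop11due}. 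This buys a proof that is structurally parallel to Proposition~\ref{prop4} and avoids the paper's iteration/absorption step altogether, at the cost of invoking Dixmier--Malliavin a second time. Part \ref{prop11tre} coincides with the paper's computation (and you correctly name the resulting topology $\kappa(\cD'_{L^1},\dot\cB)$ where the paper's text writes $\beta$). One imprecision, shared with the paper's treatment of Proposition~\ref{prop4}: in ``$\Leftarrow$'' of \ref{prop11uno} the phrase ``continuity of $\cD \times \dot\cB \to \dot\cB$'' does not literally apply to $\varphi_i * (g B_{1,\infty})$, since $g B_{1,\infty} \not\subset \dot\cB$; your parenthetical remark (convolution gives equicontinuity of every derivative, the factor $g$ gives equitightness, then Arzel\`a--Ascoli in each derivative) is the argument that actually closes this gap.
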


\begin{proof}
  \ref{prop11due} ``$\Leftarrow$'': By Leibniz' formula we obtain, for all $\alpha \in \bN_0^n$:
  \[ \pd^\alpha C \subset \sum_{\beta \le \alpha} \binom{\alpha}{\beta} \pd^{\alpha-\beta} g \pd^\beta B, \]
  i.e., the boundedness and the uniform smallness of all $\pd^\alpha C$. The $\infty$-equi\-continuity of $\pd^\alpha C$ is implied by the inequality
  \[ \norm{\tau_h ( \pd^\alpha C) - \pd^\alpha C}_\infty \le \norm{\pd^{\alpha + l} C}_{\infty} \cdot \abso{h},\quad h \in \bR^n, l = (1, \dotsc, 1), \]
  and the boundedness of $\pd^{\alpha + l}C$ in $\cC_0$. Therefore, $\pd^\alpha C$ is relatively compact by the Arzel\`a-Ascoli Theorem.

  ``$\Rightarrow$'': By the compact factorisation property of $\dot\cB$ in \cite[Thm.~3.4.]{Voigt} or by \cite[Corollary, p.~610]{Craw} (whose hypothesis is fulfilled by \cite[Prop.~1.6.]{Voigt}) we even infer the existence of $g \in \dot\cB$ and a \emph{compact} set $B \subset \dot\cB$ such that $C \subset g B$.

  \ref{prop11uno} ``$\Rightarrow$'': If $C \subset \dot\cB$ is relatively compact then by \ref{prop11due} there exist $g \in \dot\cB$ and $B \subset \dot\cB$ bounded such that $C \subset g B$. By Proposition~\ref{prop8} \ref{prop8due} $\exists \varphi_1,\ldots,\varphi_m \in \cD$ such that $C \subset \sum_{i=1}^{m} g(\varphi_i * B_{1,\infty})$. Because $\sum_{i=1}^{m}g(\varphi_i * B_{1,\infty})$ is compact $\exists g_1 \in \dot\cB$,  $\psi_1,\ldots,\psi_m \in\cD$ such that $C \subset \sum_{i,j=1}^{m} g_1(\psi_j * (g ( \varphi_i * B_{1,\infty})))$, i.e. $C \subset \sum_{i=1}^{m} \psi_i * (\tilde g B_{1,\infty})$, $\tilde g \coleq C \cdot g$ with suitable $C>0$, which is the equivalent form of the characterisation of relatively compact sets in $\dot\cB$.

  \ref{prop11tre} If $C \subset \dot\cB$ is compact and $S \in \cD'_{L^1}$ then:
  \begin{align*}
    p_C(S) &\le \sum_{i=1}^{m} \sup_{f \in B_{1,\infty}} \abso{\langle g(\varphi_i * f), S \rangle } = \sum_{i=1}^{m} \sup_{f \in B_{1,\infty}} \abso{\langle f, \check \varphi_i * (gS) \rangle } = \sum_{i=1}^{m} \norm{\check \varphi_i * (gS)}_1.
  \end{align*}

  Hence, the topology on $\cD'_{L^1}$ generated by the seminorms $p_{g, \varphi}$ is finer than $\beta(\cD'_{L^1}, \dot\cB)$. It also is coarser because the mapping $\cD'_{L^1} \to L^1$, $S \mapsto \varphi * (gS)$ is continuous.

  The equivalent description in \ref{prop11tre} immediately results from the second characterisation of relatively compact sets in \ref{prop11uno}.
\end{proof}

A topology generating family of seminorms in $\dot\cB'_c$ is given in 

\begin{proposition}\label{prop12}
  The seminorms $p_{g, \varphi}$, $(g, \varphi) \in \dot\cB \times \cD$ or $(g, \varphi) \in \dot\cB \times \cS$, defined by $\dot\cB' \ni S \mapsto p_{g, \varphi}(S) = \norm{\varphi * (gS)}_\infty$, generate the topology of $\dot\cB'_c$.
\end{proposition}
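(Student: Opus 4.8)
The plan is to recognise this as the exact dual counterpart of Proposition~\ref{prop11}, \ref{prop11tre}: there the $\kappa$-topology on $\cD'_{L^1}$ (dual pair $(\cD'_{L^1},\dot\cB)$) was generated by $L^1$-function-seminorms built from the compact sets of $\dot\cB$, whereas here the $\kappa$-topology on $\dot\cB'$ is governed by the dual pair $(\dot\cB',\cD_{L^1})$, so the relevant compact sets now live in $\cD_{L^1}$ and the function-seminorms carry an $L^\infty$-norm. Accordingly, the substantive input I would use is the characterisation of the relatively compact subsets of $\cD_{L^1}$ from Proposition~\ref{prop4}, \ref{prop4uno} and \ref{prop4due} (with $p=1$). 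Writing $\tau$ for the topology generated by the $p_{g,\varphi}$ and $\kappa=\kappa(\dot\cB',\cD_{L^1})$, and recalling that $\kappa$ is already generated by the seminorms $p_C$ over merely relatively compact $C\subset\cD_{L^1}$ (their closed absolutely convex hulls stay compact in the Fréchet space $\cD_{L^1}$), the goal is to prove $\tau=\kappa$ by two inclusions, each reducing to a short duality computation using the adjoint identities $\langle T,\varphi*f\rangle=\langle\check\varphi*T,f\rangle$, $\langle\varphi*T,f\rangle=\langle T,\check\varphi*f\rangle$ and $\langle gS,h\rangle=\langle S,gh\rangle$.

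For $\kappa\subseteq\tau$ (the $\tau$-topology is finer) I would start from a relatively compact $C\subset\cD_{L^1}$ and invoke Proposition~\ref{prop4}, \ref{prop4uno} to write $C\subset g\big(\sum_{i=1}^m\varphi_i*B_{1,1}\big)$ for suitable $g\in\dot\cB$, $\varphi_i\in\cD$. Expanding $\langle S,\psi\rangle$ for $\psi\in C$ and pushing $g$ and the convolutions onto $S$ via the adjoint identities yields $p_C(S)\le\sum_{i=1}^m\|\check\varphi_i*(gS)\|_\infty=\sum_{i=1}^m p_{g,\check\varphi_i}(S)$, so every $\kappa$-seminorm is dominated by finitely many $p_{g,\varphi}$'s. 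For $\tau\subseteq\kappa$ (the $\tau$-topology is coarser) I would use that $\varphi*(gS)\in\cB\subset L^\infty$ is a genuine bounded function and apply the duality $L^\infty=(L^1)'$ to rewrite $p_{g,\varphi}(S)=\|\varphi*(gS)\|_\infty=\sup_{f\in B_{1,1}}|\langle S,g(\check\varphi*f)\rangle|=p_{g(\check\varphi*B_{1,1})}(S)$. Here I must check that $g(\check\varphi*f)$ actually lands in $\cD_{L^1}$, which it does since $\check\varphi*f\in\cD_{L^1}$ for $f\in L^1$ and multiplication by $g\in\dot\cB$ preserves $\cD_{L^1}$. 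It then remains to see that $g(\check\varphi*B_{1,1})$ is relatively compact in $\cD_{L^1}$: since $\check\varphi*B_{1,1}$ is bounded in $\cD_{L^1}$, this is immediate from the factorisation characterisation in Proposition~\ref{prop4}, \ref{prop4due}, whence $p_{g,\varphi}$ is a $\kappa$-seminorm.

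The two inclusions give $\tau=\kappa$ for $\varphi\in\cD$, and the passage to $\varphi\in\cS$ costs nothing: the $\cD$-system is a subfamily of the $\cS$-system, so $\kappa=\tau_{\cD}\subseteq\tau_{\cS}$, while the coarser-direction argument above works verbatim for $\varphi\in\cS$ because $\check\varphi*B_{1,1}$ stays bounded in $\cD_{L^1}$ when $\varphi\in\cS$ (Young's inequality applied to each derivative), so Proposition~\ref{prop4}, \ref{prop4due} again makes $g(\check\varphi*B_{1,1})$ relatively compact and $p_{g,\varphi}$ a $\kappa$-seminorm; hence $\tau_{\cS}\subseteq\kappa=\tau_{\cD}\subseteq\tau_{\cS}$. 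I do not expect a serious obstacle: the only points requiring care are the bookkeeping of the convolution/multiplication adjoints and verifying that each auxiliary set ($g(\check\varphi*f)$ and $g(\check\varphi*B_{1,1})$) sits inside $\cD_{L^1}$ and is relatively compact. The real content — that an $L^\infty$ function-seminorm converts, via $L^\infty=(L^1)'$ duality, into a seminorm over a relatively compact subset of $\cD_{L^1}$ — is precisely where Proposition~\ref{prop4} does the heavy lifting.
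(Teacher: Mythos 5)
Your proposal is correct and is essentially the paper's proof in unpacked form: the paper derives Proposition~\ref{prop12} as a direct consequence of Proposition~\ref{prop4}, \ref{prop4tre} for $q=\infty$ (i.e.\ $p=1$), restricted to the closed subspace $\dot\cB'\subset\cD'_{L^\infty}$, whose $\kappa$-topology with respect to $\cD_{L^1}$ is exactly the induced one. Your two-inclusion argument---Proposition~\ref{prop4}, \ref{prop4uno}--\ref{prop4due} at $p=1$ for one direction, and $L^\infty=(L^1)'$ duality plus the factorisation $g(\check\varphi * B_{1,1})$ for the other---is precisely the content behind that citation, so the route is the same.
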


\begin{proof}
  This is a direct consequence of Proposition~\ref{prop4} \ref{prop4tre} for $q=\infty$.
\end{proof}

In \cite[Proposition 2.5]{BNO} the topology of $\cD_{L^p, c}$ is described by ``function''-seminorms. Analogously we have:

\begin{proposition}\label{prop13}
  The topology of $\dot\cB_c$ is described by the seminorms $p_{m, \varphi}$, $(m, \varphi) \in \bN_0 \times \dot\cB$ defined by
  \[ \dot\cB \ni f \mapsto p_{m, \varphi}(f) = \norm{\varphi ( L_{-2m} * f)}_\infty. \]
\end{proposition}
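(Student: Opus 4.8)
The plan is to follow the pattern of Propositions~\ref{prop4} and~\ref{prop11}, recovering the defining seminorms of $\kappa(\dot\cB,\cD'_{L^1})$ from a characterisation of the relatively compact subsets of the dual space $\cD'_{L^1}$. The computational heart is the identity, valid for $f,\varphi\in\dot\cB$ and $h\in L^1$,
\[ \langle f, L_{-2m}*(\varphi h)\rangle = \langle (1-\Delta_n)^m f, \varphi h\rangle = \int_{\bR^n} h\cdot\varphi\,(L_{-2m}*f)\,\ud x, \]
which uses $L_{-2m} = (1-\Delta_n)^m\delta$ and the self-transposedness of $(1-\Delta_n)^m$. Since $\varphi\,(L_{-2m}*f)\in\cC_0\subset L^\infty$, taking the supremum over $\norm{h}_1\le 1$ gives
\[ \sup_{h\in B_{1,1}}\abso{\langle f, L_{-2m}*(\varphi h)\rangle} = \norm{\varphi\,(L_{-2m}*f)}_\infty = p_{m,\varphi}(f). \]
Thus $p_{m,\varphi}=p_{C_{m,\varphi}}$ is exactly the $\kappa$-seminorm attached to the absolutely convex set $C_{m,\varphi}\coleq L_{-2m}*(\varphi B_{1,1})$, and the whole proposition reduces to deciding when such sets are relatively compact.

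The crux is therefore the analogue of Proposition~\ref{prop7} for $q=1$: a set $C\subset\cD'_{L^1}$ is relatively compact if and only if $C\subset L_{-2m}*(\varphi B_{1,1})$ for some $m\in\bN_0$ and $\varphi\in\dot\cB$. For the ``only if'' part I would use the compact regularity of $\cD'_{L^1}$ (\cite[Proposition 2.2]{BNO}) to place $C$ in a Banach step $L_{-2m}*L^1$ with $L_{2m}*C$ relatively compact in $L^1$, and then factorise $L_{2m}*C\subset\varphi B_{1,1}$, $\varphi\in\dot\cB$, by Lemma~\ref{lemma2}. The ``if'' part is the main obstacle: since $L^1$ is not reflexive the Fréchet–Kolmogorov argument of Proposition~\ref{prop7} does not transfer, and indeed $\varphi B_{1,1}$ is \emph{not} relatively compact in $L^1$ (mass may concentrate to a point). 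Instead I would exploit that $\varphi B_{1,1}$ is bounded in $\cM^1$ and uniformly tight because $\varphi\in\dot\cB\subset\cC_0$, so that for $k$ large the smoothed set $L_{2k}*(\varphi B_{1,1})$ is bounded, $L^1$-equicontinuous (from $\norm{\tau_h L_{2k}-L_{2k}}_1\to 0$) and uniformly tight, hence relatively compact in $L^1$ by the Fréchet--Kolmogorov--Riesz--Weyl theorem; pulling this back through $L_{-2k}*$ and invoking compact regularity once more shows that $C_{m,\varphi}$ is relatively compact in $\cD'_{L^1}$.

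With this characterisation both inclusions are immediate. If $C\subset\cD'_{L^1}$ is absolutely convex and compact, the ``only if'' part gives $C\subset L_{-2m}*(\varphi B_{1,1})$, so $p_C(f)\le\sup_{h\in B_{1,1}}\abso{\langle f, L_{-2m}*(\varphi h)\rangle}=p_{m,\varphi}(f)$; hence every $\kappa$-seminorm is dominated by a function-seminorm and $\kappa(\dot\cB,\cD'_{L^1})$ is coarser than the topology generated by the $p_{m,\varphi}$. Conversely, by the ``if'' part each $C_{m,\varphi}$ is relatively compact, so $p_{m,\varphi}=p_{C_{m,\varphi}}$ is $\kappa$-continuous and the function-topology is coarser than $\kappa(\dot\cB,\cD'_{L^1})$. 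The two estimates together give equality, and since $\dot\cB_c=(\dot\cB,\kappa(\dot\cB,\cD'_{L^1}))$ this is the assertion. The only step demanding real work is the ``if'' direction of the compact-set characterisation for $q=1$; everything else is the duality identity above combined with Lemma~\ref{lemma2} and the compact regularity of $\cD'_{L^1}$ already recorded in the proof of Proposition~\ref{prop9}.
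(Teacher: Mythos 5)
Your proof is correct, and its first half coincides with the paper's own (very terse) argument: reduce to the $\kappa$-seminorms $p_C$, use the compact regularity of $\cD'_{L^1}$ (\cite[Proposition~2.2]{BNO}) to place a compact $C$ in a Banach step so that $L_{2m}*C$ is compact there, and then factorise by Lemma~\ref{lemma2}; the paper invokes the factorisation ``with $L^q$ replaced by $\cC_0$'' (i.e.\ working through the dual steps and the pairing $(\cC_0)'=\cM^1$), whereas you apply Lemma~\ref{lemma2} at $q=1$ directly, which its hypotheses ($1\le q<\infty$) equally cover. The genuine difference is that you also prove the converse half, which the paper's proof passes over in silence: that each set $C_{m,\varphi}=L_{-2m}*(\varphi B_{1,1})$ is relatively compact in $\cD'_{L^1}$, so that $p_{m,\varphi}=p_{C_{m,\varphi}}$ is indeed $\kappa(\dot\cB,\cD'_{L^1})$-continuous. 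Your argument for this --- $\varphi B_{1,1}$ is $\cM^1$-bounded and uniformly tight because $\varphi\in\cC_0$, hence $L_{2k}*(\varphi B_{1,1})$ is bounded, translation-equicontinuous and equitight, hence relatively compact in $L^1$ by Fr\'echet--Kolmogorov, and this compactness is transported into $\cD'_{L^1}$ --- is sound, and it parallels the implication \ref{prop7quattro}$\Rightarrow$\ref{prop7uno} of Proposition~\ref{prop7}, with equitightness obtained for free from $\varphi\in\cC_0$ rather than from Voigt's factorisation theorem. Two small remarks: the final appeal to ``compact regularity once more'' is superfluous, since continuity of $L_{-2k}*\colon L^1\to\cD'_{L^1}$ already transfers relative compactness; and there is a softer route to this half using what Section~\ref{sec4} already records, namely $\kappa(\dot\cB,\cD'_{L^1})=\tau(\dot\cB,\cD'_{L^1})$ (Schur property of $\cD'_{L^1}$): the unit ball of $\cM^1$ is $\sigma(\cM^1,\cC_0)$-compact by Alaoglu, and $\mu\mapsto L_{-2m}*(\varphi\mu)$ is continuous from $\sigma(\cM^1,\cC_0)$ into $(\cD'_{L^1},\sigma(\cD'_{L^1},\dot\cB))$ because $f\mapsto\varphi\,(L_{-2m}*f)$ maps $\dot\cB$ into $\cC_0$; hence the defining sets are absolutely convex and weakly compact, and $\tau$-continuity of $p_{m,\varphi}$ is immediate. (Your aside attributing the failure of the Proposition~\ref{prop7} argument to non-reflexivity of $L^1$ is beside the point; the relevant obstruction is exactly the one you then name, concentration of mass in $\varphi B_{1,1}$.)
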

\begin{proof}
  Using that the topology of $\dot\cB_c$ is given by the seminorms $p_C$, $C \subset \cD'_{L^1}$ compact, $p_C$ defined by:
  \[ \dot\cB \ni f \mapsto p_C(f) = \sup_{S \in C} \abso{\langle f, S \rangle } \]
  the claim follows from the compact regularity of $\cD_{L^1}'$ together with Lemma~\ref{lemma2} where $L^q$ has to be replaced by $\cC_0$.
\end{proof}

As a consequence of Proposition~\ref{prop3} \ref{prop3tre} and  \ref{prop3quatro} we obtain the following description of the topology of $\dot\cB'$ by ``function''-seminorms in 
\begin{proposition}\label{prop14}
  The topology of $\dot\cB'$ is generated by the seminorms $p_\varphi$, $\varphi \in \cD$ or $\varphi \in \cS$, defined by
  \[ \dot\cB' \ni S \mapsto p_\varphi(S) = \norm{\varphi * S}_\infty. \]
\end{proposition}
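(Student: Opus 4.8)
The plan is to realise $\dot\cB'$ as a topological \emph{subspace} of $\cD'_{L^\infty}$ and then merely restrict the seminorm description already obtained in Proposition~\ref{prop3}. By definition $\dot\cB'$ is the closure of $\cE'$ in $\cD'_{L^\infty}$, so it is a closed linear subspace carrying the topology induced by $\cD'_{L^\infty}$. The point to exploit is that $\cD'_{L^\infty}$ is exactly the endpoint case $q = \infty$ (equivalently $p = 1$) of the scale $\cD'_{L^q}$ treated in Proposition~\ref{prop3}, and its topology is $\beta(\cD'_{L^\infty}, \cD_{L^1})$.

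First I would invoke Proposition~\ref{prop3}~\ref{prop3tre} (for $\varphi \in \cD$) and Proposition~\ref{prop3}~\ref{prop3quatro} (for $\varphi \in \cS$) with the exponent $q = \infty$: these state that the seminorms $S \mapsto \norm{\varphi * S}_\infty$ generate the topology of $\cD'_{L^\infty}$. Since a generating family of seminorms for a locally convex space restricts to a generating family on any linear subspace endowed with the induced topology, the restrictions of these seminorms to $\dot\cB'$ generate its topology. For $S \in \dot\cB' \subset \cD'_{L^\infty}$ the restricted seminorm is precisely $p_\varphi(S) = \norm{\varphi * S}_\infty$, which is the asserted family, in both variants $\varphi \in \cD$ and $\varphi \in \cS$.

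The remaining points are bookkeeping rather than substance, and I do not expect a genuine obstacle. One should confirm that $\varphi * S$ is well defined and lies in $L^\infty$ for every $S \in \dot\cB'$; this is immediate from $\dot\cB' \subset \cD'_{L^\infty}$ together with the continuity of $\cD \times \cD'_{L^\infty} \to L^\infty$ (respectively, via $\cS \subset \cD_{L^1}$, of $\cS \times \cD'_{L^\infty} \to L^\infty$) already used in Proposition~\ref{prop3}. One should also record that ``the topology of $\dot\cB'$'' means the subspace topology inherited from $\cD'_{L^\infty}$, which holds by the very definition of $\dot\cB'$. Thus the proposition is a direct transfer of Proposition~\ref{prop3} to the subspace $\dot\cB'$, the only nontrivial input being the seminorm description of $\cD'_{L^\infty}$ established there.
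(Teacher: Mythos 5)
Your proof is correct and takes essentially the same route as the paper: the paper obtains Proposition~\ref{prop14} precisely as a consequence of Proposition~\ref{prop3}~\ref{prop3tre} and \ref{prop3quatro} in the case $p=1$, $q=\infty$. The only detail you add—correctly—is the routine observation that, since $\dot\cB'$ is by definition the closure of $\cE'$ in $\cD'_{L^\infty}$ and hence carries the induced topology, the generating seminorms of $\cD'_{L^\infty}$ restrict to generating seminorms of $\dot\cB'$.
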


\begin{lemma}\label{cor1}If $C \subset \cC_0$ is compact then there exists $\varphi \in \dot\cB$ with $C \subset \varphi B_{1,\infty}$.
\end{lemma}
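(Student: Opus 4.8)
The plan is to follow the proof of Lemma~\ref{lemma2} verbatim, with the Banach space $L^q$ replaced by $\cC_0$ viewed as a Banach module over the Fr\'echet algebra $\dot\cB$ under pointwise multiplication. By Proposition~1.6 in~\cite[p.~336]{Voigt} the sequence $(\mathrm{e}^{-|x|^2/k^2})_{k=1}^{\infty}$ is a uniformly bounded approximate unit for $\dot\cB$, so---exactly as in Lemma~\ref{lemma2}---the factorisation $C \subset \varphi B_{1,\infty}$ will follow directly from Corollary~4 in~\cite[p.~149]{VoigtFactorization} once I check the single remaining hypothesis: that $\mathrm{e}^{-|x|^2/k^2} f \to f$ in $\cC_0$ for every $f \in \cC_0$ as $k \to \infty$.

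The verification of this convergence is the only point where the proof differs from the $L^q$ case (where one invokes dominated convergence), and it is here that the defining property of $\cC_0$, namely vanishing at infinity, enters. Given $\e > 0$, I would choose $R > 0$ with $\abso{f(x)} < \e$ for $\abso{x} > R$. On the ball $B_R$ the functions $\mathrm{e}^{-|x|^2/k^2}$ converge to $1$ uniformly, so the supremum of $\abso{(\mathrm{e}^{-|x|^2/k^2}-1)f}$ over $B_R$ tends to $0$; outside $B_R$ the bound $0 \le \mathrm{e}^{-|x|^2/k^2} \le 1$ gives $\abso{(\mathrm{e}^{-|x|^2/k^2}-1)f(x)} \le \abso{f(x)} < \e$. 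Together these two estimates yield $\norm{\mathrm{e}^{-|x|^2/k^2} f - f}_\infty \to 0$, which is precisely the approximate-identity relation demanded by the factorisation theorem.

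With this relation established, Corollary~4 in~\cite[p.~149]{VoigtFactorization} applies and produces the required $\varphi \in \dot\cB$ with $C \subset \varphi B_{1,\infty}$. I do not expect any genuine obstacle here: the argument is a transcription of Lemma~\ref{lemma2}, and the only substantive step is the uniform-convergence estimate above, where one must take care to split the estimate into a compact ball (on which $\mathrm{e}^{-|x|^2/k^2}\to 1$ uniformly) and its complement (controlled by the decay of $f$).
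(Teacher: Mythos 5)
Your proposal is correct and coincides with the paper's own argument: the paper's proof of Lemma~\ref{cor1} consists precisely of the remark that it is completely analogous to Lemma~\ref{lemma2}, i.e.\ the application of the approximate unit $(\mathrm{e}^{-|x|^2/k^2})_k$ from Proposition~1.6 of~\cite{Voigt} and Corollary~4 of~\cite{VoigtFactorization} to $\cC_0$ as a Banach module over $\dot\cB$. Your explicit verification of $\mathrm{e}^{-|x|^2/k^2}f\to f$ in the sup-norm (splitting into a large ball, where the convergence is uniform, and its complement, controlled by the vanishing of $f$ at infinity) correctly supplies the one detail the paper leaves implicit.
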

The \emph{proof} is completely analogous to the proof of Lemma~\ref{lemma2}.

Analogously to Proposition~\ref{prop7} we characterise compact sets in $\dot\cB'$ by

\begin{proposition}\label{prop15}
  Let $C \subset \dot\cB'$. Then the following assertions are equivalent:
  \begin{enumerate}[label=(\roman*)]
  \item\label{prop15uno} $C$ is relatively compact,
  \item\label{prop15due} $\exists (m, \varphi_1,\ldots,\varphi_M) \in \bN_0 \times \dot\cB^M$: $C \subset \sum_{j=1}^{M} \varphi_j ( L_{-2m} * B_{1,\infty})$, $B_{1,\infty} = \{f\in \cC_0\colon \|f\|_\infty \leq 1\}$,
  \item\label{prop15tre} $\exists (m, \varphi) \in \bN_0 \times \dot\cB$: $C \subset L_{-2m} * (\varphi B_{1,\infty})$,
  \item\label{prop15quattro} $\exists \varphi \in \dot\cB$ $\exists D \subset \dot\cB'$, $D$ bounded: $C \subset \varphi D$.
  \end{enumerate}
\end{proposition}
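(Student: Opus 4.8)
The plan is to prove the equivalences by running the cycle \ref{prop15uno} $\Rightarrow$ \ref{prop15tre} $\Rightarrow$ \ref{prop15due} $\Rightarrow$ \ref{prop15quattro} $\Rightarrow$ \ref{prop15uno}, transcribing the proof of Proposition~\ref{prop7} with the Banach space $L^q$ replaced throughout by $\cC_0$, the unit ball $B_{1,q}$ by $B_{1,\infty}$, Lemma~\ref{lemma2} by Lemma~\ref{cor1}, and the Fr\'echet--Kolmogorov--Riesz--Weyl theorem by the Arzel\`a--Ascoli theorem for $\cC_0$. The structural fact underlying every step is that $\dot\cB'$ is a regular, indeed compactly regular, (LB)-space with steps $L_{-2m}*\cC_0$, as recorded in Proposition~\ref{prop10} and \cite[Section~3]{BNO}.

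For \ref{prop15uno} $\Rightarrow$ \ref{prop15tre} I would invoke compact regularity of $\dot\cB'$: a relatively compact $C$ is already relatively compact in some step, so there is $m\in\bN_0$ with $L_{2m}*C$ relatively compact in $\cC_0$, and Lemma~\ref{cor1} then furnishes $\varphi\in\dot\cB$ with $L_{2m}*C\subset\varphi B_{1,\infty}$, i.e.\ $C\subset L_{-2m}*(\varphi B_{1,\infty})$. The step \ref{prop15tre} $\Rightarrow$ \ref{prop15due} is verbatim the corresponding step of Proposition~\ref{prop7}: writing $L_{-2m}*{}=(1-\Delta_n)^m$ and expanding by Leibniz' rule (using \cite[Thm.~3, p.~135]{St} or \cite[Cor.~1, p.~347]{Mazja}) yields
\[
  C \subset (1-\Delta_n)^m(\varphi B_{1,\infty}) \subset \sum_{\abso{\alpha}\le 2m}\pd^\alpha\varphi\,\bigl(L_{-2m}*B_{1,\infty}\bigr),
\]
so the functions $\pd^\alpha\varphi\in\dot\cB$ serve as the $\varphi_j$. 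For \ref{prop15due} $\Rightarrow$ \ref{prop15quattro} I note that $L_{-2m}*B_{1,\infty}$ is bounded in $\dot\cB'$ while $\{\varphi_1,\dots,\varphi_M\}$ is compact in $\dot\cB$; the compact factorisation property of $\dot\cB$ (\cite[Prop.~1.6]{Voigt}) produces $\psi,\psi_1,\dots,\psi_M\in\dot\cB$ with $\varphi_j=\psi\psi_j$, whence $C\subset\psi D$ with $D:=\sum_j\psi_j(L_{-2m}*B_{1,\infty})$ bounded in $\dot\cB'$.

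The main work is \ref{prop15quattro} $\Rightarrow$ \ref{prop15uno}. Given $C\subset\varphi D$ with $\varphi\in\dot\cB$ and $D$ bounded, regularity of $\dot\cB'$ provides $m$ with $L_m*(\varphi D)$ bounded in $\cC_0$; I then show $L_{m+1}*(\varphi D)$ is relatively compact in $\cC_0$, so that $\varphi D$, and with it $C$, is relatively compact in $\dot\cB'$. Uniform boundedness is immediate, and equicontinuity follows from $L_{m+1}*(\varphi D)=L_1*\bigl(L_m*(\varphi D)\bigr)$ together with $\tau_hL_1\to L_1$ in $L^1$. The delicate point is equi-vanishing at infinity (equitightness), the third hypothesis of Arzel\`a--Ascoli for $\cC_0$. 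The hard part will be obtaining this by factorising $L_{m+1}*(\varphi D)$ over $\dot\cB$: applying \cite[Thm.~3, p.~148]{VoigtFactorization} to $\cC_0$ as a Banach module over $\dot\cB$ produces $g\in\dot\cB$ with $L_{m+1}*(\varphi D)\subset g B_{1,\infty}$, which is equitight. To license this factorisation I must verify that $\mathrm{e}^{-\varepsilon\abso{x}^2}\bigl(L_{m+1}*(\varphi S)\bigr)\to L_{m+1}*(\varphi S)$ in $\cC_0$ uniformly for $S\in D$ as $\varepsilon\searrow0$; exactly as in Proposition~\ref{prop7} this is handled by a cut-off argument, choosing $\psi\in\cD$ equal to $1$ on $B_1$ with support in $B_2$, setting $\psi_R(x)=\psi(x/R)$, and using $\psi_R\varphi\to\varphi$ in $\dot\cB$, hence $\varphi\psi_RS\to\varphi S$ in $\dot\cB'$ uniformly for $S\in D$, together with $L_{m+1}*(\psi_R\varphi S)\in\cO_C'\cap\cC_0$ (cf.\ \cite[p.~636]{Belyaev}). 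I expect this uniform-tightness verification to be the only genuinely nontrivial step; every other implication transcribes the $L^q$-proof of Proposition~\ref{prop7}.
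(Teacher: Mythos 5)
Your proposal reproduces the paper's own proof: the paper establishes Proposition~\ref{prop15} exactly as a transcription of Proposition~\ref{prop7}, with $L^q$ replaced by $\cC_0$, $B_{1,q}$ by $B_{1,\infty}$ and Lemma~\ref{lemma2} by Lemma~\ref{cor1}, runs the same cycle \ref{prop15uno} $\Rightarrow$ \ref{prop15tre} $\Rightarrow$ \ref{prop15due} $\Rightarrow$ \ref{prop15quattro} $\Rightarrow$ \ref{prop15uno}, and settles \ref{prop15quattro} $\Rightarrow$ \ref{prop15uno} with precisely the sentence ``analogously to Proposition~\ref{prop7}, replacing $\cD'_{L^q}$ by $\dot\cB'$''; your worked-out version of that implication (boundedness, equicontinuity via convolution with $L_1$, equitightness via Voigt's factorisation theorem with the $\psi_R$ cut-off) is what that sentence abbreviates.

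There is, however, one point where ``verbatim'' transcription is not legitimate, and you have carried over exactly the wrong citations. In \ref{prop15tre} $\Rightarrow$ \ref{prop15due} the inclusion $\pd^\beta B_{1,\infty}\subset C\,(1-\Delta_n)^m B_{1,\infty}$ for \emph{all} $\abso{\beta}\le 2m$ cannot be justified by \cite[Thm.~3, p.~135]{St} or \cite[Cor.~1, p.~347]{Mazja}: these give the equivalence of Bessel-potential and Sobolev norms on $L^q$ only for $1<q<\infty$, and the statement is genuinely false at the endpoint --- for $\abso{\beta}=2m$ the operator $f\mapsto \pd^\beta L_{2m}*f$ is a singular integral (e.g.\ $\pd_1\pd_2(1-\Delta_n)^{-1}$ for $n\ge 2$, $m=1$) which is not bounded on $\cC_0$ or $L^\infty$. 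The step is nevertheless correct, for a different reason: in the Leibniz expansion of $(1-\Delta_n)^m(\varphi f)$ the top-order derivatives $\pd^\beta f$ with $\abso{\beta}=2m$ occur only multiplied by the \emph{undifferentiated} $\varphi$, so these terms already lie in $\varphi\,(1-\Delta_n)^m B_{1,\infty}$; all remaining terms have $\abso{\beta}<2m$, and for those $\pd^\beta L_{2m}\in L^1$, whence $\pd^\beta f=(1-\Delta_n)^m(\pd^\beta L_{2m}*f)$ with $\pd^\beta L_{2m}*f\in \norm{\pd^\beta L_{2m}}_1 B_{1,\infty}$. This is why the paper, at this point of Proposition~\ref{prop15}, deliberately replaces the singular-integral references of Proposition~\ref{prop7} by \cite[6.6~(b), p.~160]{St}. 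Apart from this localized (and fixable) defect in justification, your proposal coincides with the paper's argument.
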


\begin{proof}
  \ref{prop15uno} $\Rightarrow$ \ref{prop15tre}:
  If $C \subset \dot\cB'$ is relatively compact then by \cite[Proposition 3.1]{BNO} there exists $m \in \bN_0$ such that $L_{2m} * C$ is compact in $\cC_0$. By Corollary~\ref{cor1} $\exists \varphi \in \dot\cB$ such that $L_{2m} * C \subset \varphi B_{1,\infty}$ and $C \subset L_{-2m} * (\varphi B_{1,\infty})$.
  
  \ref{prop15tre} $\Rightarrow$ \ref{prop15due}:
  Leibniz' rule implies
  \begin{align*}
    C \subset (1-\Delta_n)^m (\varphi B_{1,\infty}) &= \sum_{\abso{\alpha},\abso{\beta} \le 2m} c_{\alpha\beta} \pd^\alpha \varphi \pd^\beta B_{1,\infty}\\ & \subset \sum_{|\alpha|\leq 2m} \partial^\alpha \varphi (1-\Delta_n)^m B_{1,\infty}
  \end{align*}
  i.e.~\ref{prop15due} by~\cite[6.6 (b), p.~160]{St}.

  \ref{prop15due} $\Rightarrow$ \ref{prop15quattro}: $B_{1,\infty}$ is bounded in $\cC_0$ and thus $L_{-2m} * B_{1,\infty}$ is bounded in $\dot{\cB}'$.  Since $\{\varphi_1,\ldots,\varphi_M\}$ is a compact subset of $\dot\cB$ we may use the compact factorisation property of $\dot\cB$, see Proposition~1.6 in~\cite[p.~336]{Voigt}, to obtain $\psi\in\dot\cB$ and $\psi_1,\ldots,\psi_M\in\dot\cB$ with $\varphi_j = \psi \psi_j$. Hence
  \[
    C \subset \sum_{j=1}^{M} \varphi_j L_{-2m}*B_{1,q} = \psi \Big(\sum_{j=1}^{M} \psi_j L_{-2m}*B_{1,q}\Big) =: \psi D
  \]
  where $D\subset\cB'$ is a bounded set.

  \ref{prop15quattro} $\Rightarrow$ \ref{prop15uno}: This can be shown analogously to the proof of the corresponding implication of Proposition~\ref{prop7}, replacing all occurrences of $\cD'_{L^q}$ by $\dot\cB'$.
\end{proof}

\section{Bounded subsets in the Fr\'echet spaces $(L^{\lowercase{p}})_\infty$, $(\cC_0)_\infty = S^0_\infty$, $S^{\lowercase{m}}_\infty$, $\lowercase{m} \in \bN_0$, and topology generating seminorms in the (LB)-spaces $(L^{\lowercase{q}})_{-\infty}$, $(\cM^1)_{-\infty}$, $\cO_C^{\lowercase{m}}  = \cS^{\lowercase{m}}_{-\infty}$,  $\lowercase{m} \in \bN_0$.}\label{sec5}

The spaces $(\cC_0)_\infty = \cO^0_C$, $\cS^m_\infty$, $\cO_C^m = \cS^m_{-\infty}$ are defined in \cite[Ex. 12, p.~90 and Ex. 9, p.~173]{H}. The space $(\cM^1)_{-\infty} = (\cS^{\prime 0})_{-\infty}$ is the space of temperate integrable measures defined in \cite[p.~241]{Sch1}. $(L^1)_{-\infty}$ is the space of absolutely regular, temperate distributions investigated in \cite[pp.~8--34]{D}. It is the ``biggest'' space of temperate distributions $f\in \cS'\cap L^1_{\mathrm{loc}}$ with the property that for $\varphi\in \cS$ the evaluation $\langle \varphi, f\rangle$ is given by integration. The space $(L^q)_{-\infty}$ is an obvious generalisation thereof.

The bounded sets in the Fr\'echet spaces $(L^p)_{\infty}$, $1 \le p \le \infty$, and $(\cC_0)_\infty = \cS^0_\infty$ are characterised in Lemma~\ref{lemma1}:
\[ B \textrm{ bounded in }(L^p)_\infty\textrm{ or in }(\cC_0)_\infty \Longleftrightarrow \exists \varphi \in \cS: B \subset \varphi B_{1,p} \textrm{ or }B \subset \varphi B_{1,\infty}, \]
$B_{1,\infty} = \{ f \in \cC_0 : \norm{f}_\infty \le 1 \}$.

The next Proposition shows that finite differentiability commutes with the characterisation of bounded sets:

\begin{proposition}\label{prop16}
  Let $B$ be a bounded set in the Fr\'echet space $(\dot\cB^m)_\infty = \cS^m_{\infty}$, $m \in \bN_0$. Then
  \[
    \begin{aligned}
      & B \textrm{ bounded} & \Longleftrightarrow &\ \exists \varphi \in \cS\ \forall \alpha \in \bN_0^n, \abso{\alpha} \le m: \pd^\alpha B \subset \varphi B_{1,\infty}
    \end{aligned}
  \]
  with $B_{1,\infty} = \{ f \in \cC_0: \norm{f}_\infty \le 1 \}$. 
\end{proposition}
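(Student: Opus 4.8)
The plan is to deduce the whole statement from Lemma~\ref{lemma1} applied to the Banach space $F = \cC_0$, i.e.\ to the characterisation of bounded subsets of $(\cC_0)_\infty = \cS^0_\infty$. The bridge is the auxiliary set
\[ \tilde B \coleq \{ \pd^\alpha f : f \in B,\ \abso{\alpha} \le m \}, \]
and the key claim I would establish is that $B$ is bounded in $(\dot\cB^m)_\infty$ if and only if $\tilde B$ is bounded in $(\cC_0)_\infty$. Granting this, both implications follow simultaneously: by Lemma~\ref{lemma1} the set $\tilde B$ is bounded in $(\cC_0)_\infty$ exactly when there is a \emph{single} $\varphi \in \cS_+$ with $\tilde B \subset \varphi B_{1,\infty}$, and this inclusion is literally the assertion that $\pd^\alpha B \subset \varphi B_{1,\infty}$ for all $\abso{\alpha} \le m$. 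So the entire content is the equivalence of the two boundedness notions, after which the single weight $\varphi$ is furnished directly by Lemma~\ref{lemma1}.

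To prove that equivalence I would first fix convenient seminorms. On $(\dot\cB^m)_\infty = \varprojlim_k (1+\abso{x}^2)^{-k}\dot\cB^m$ the natural seminorms are $f \mapsto \norm{(1+\abso{x}^2)^k f}_{\dot\cB^m} = \sup_{\abso{\gamma}\le m}\norm{\pd^\gamma\bigl((1+\abso{x}^2)^k f\bigr)}_\infty$, and I would show by Leibniz' rule that they are equivalent to the family $q_k(f) \coleq \sup_{\abso{\alpha}\le m}\norm{(1+\abso{x}^2)^k\pd^\alpha f}_\infty$. For one inequality, expanding $\pd^\gamma\bigl((1+\abso{x}^2)^k f\bigr)$ produces coefficients $\pd^{\gamma-\beta}(1+\abso{x}^2)^k$ that are polynomials of degree $\le 2k$, hence dominated by a constant times $(1+\abso{x}^2)^k$; this gives $\norm{(1+\abso{x}^2)^k f}_{\dot\cB^m}\le C_k\, q_k(f)$. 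For the reverse, writing $g = (1+\abso{x}^2)^k f$ and differentiating $f = (1+\abso{x}^2)^{-k} g$, every derivative $\pd^\delta(1+\abso{x}^2)^{-k}$ decays at least like $(1+\abso{x}^2)^{-k}$, so $(1+\abso{x}^2)^k\abso{\pd^\alpha f}$ is dominated by a constant times $\sum_{\beta\le\alpha}\abso{\pd^\beta g}$, and since $\abso{\alpha}\le m$ this yields $q_k(f)\le C_k\norm{(1+\abso{x}^2)^k f}_{\dot\cB^m}$. Consequently $\sup_{f\in B}$ of the projective seminorms is finite for every $k$ if and only if $\sup_{f\in B} q_k(f) < \infty$ for every $k$, and the latter is exactly $\sup_{g\in\tilde B}\norm{(1+\abso{x}^2)^k g}_\infty < \infty$, i.e.\ boundedness of $\tilde B$ in $(\cC_0)_\infty$.

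The same estimates also show that each $\pd^\alpha f$ with $\abso{\alpha}\le m$ lies in $(\cC_0)_\infty$ (a bounded continuous coefficient times an element of $\cC_0$ stays in $\cC_0$), so that $\tilde B \subset (\cC_0)_\infty$ and Lemma~\ref{lemma1} genuinely applies. Chaining the equivalences then gives: $B$ bounded in $(\dot\cB^m)_\infty$ $\iff$ $\tilde B$ bounded in $(\cC_0)_\infty$ $\iff$ $\exists\varphi\in\cS_+$ with $\tilde B\subset\varphi B_{1,\infty}$ $\iff$ $\exists\varphi\in\cS$ with $\pd^\alpha B\subset\varphi B_{1,\infty}$ for all $\abso{\alpha}\le m$, which is the asserted characterisation. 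I expect the main (indeed only) obstacle to be the seminorm comparison of the second paragraph: it is routine Leibniz bookkeeping, but one must verify carefully that the weight $(1+\abso{x}^2)^k$ can be moved across differentiation of order $\le m$ in both directions without any loss of order, so that boundedness of $B$ and of $\tilde B$ are described by identical numerical conditions; everything after that is formal.
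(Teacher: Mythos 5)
Your proposal is correct and follows essentially the same route as the paper: reduce to Lemma~\ref{lemma1} with $F = \cC_0$, using the fact (which you verify by Leibniz' rule and the paper takes for granted) that differentiation of order $\le m$ identifies boundedness in $\cS^m_\infty$ with boundedness of the derivative sets in $\cS^0_\infty$. Your version is in fact slightly more complete than the paper's one-line proof: by applying Lemma~\ref{lemma1} once to the union $\tilde B = \bigcup_{\abso{\alpha}\le m}\pd^\alpha B$ you obtain directly the \emph{single} weight $\varphi$ that the statement demands, and you also check the implication ``$\Leftarrow$'', whereas the paper's argument as written only produces $\alpha$-dependent functions $\varphi_\alpha$ and leaves both the combination into one $\varphi$ and the converse direction implicit.
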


\begin{proof}
  If $B$ is a bounded subset of $(\dot\cB^m)_\infty = \cS^m_\infty$ then $\forall \alpha$, $\abso{\alpha} \le m$, $\pd^\alpha B$ is bounded in $\cS^0_\infty$. Thus, there exist functions $\varphi_\alpha \in \cS$ such that $\pd^\alpha B \subset \varphi_\alpha B_{1,\infty}$.
\end{proof}

\begin{proposition}\label{prop17}
  A projective description of the (LB)-topology of the spaces $L^q_{-\infty}$, $1 \le q \le \infty$, $(\cM^1)_{-\infty}$, $(\cC_0)_{-\infty} = \cS^0_{-\infty} = \cO^0_C$ is given by the seminorms $p_\varphi$, $\varphi \in \cS$, defined by
  \begin{align*}
   L^q_{-\infty} \ni f \mapsto p_\varphi(f) &= \norm{\varphi f}_q,\\
   \cS^0_{-\infty} \ni f \mapsto p_\varphi(f) &= \norm{\varphi f}_\infty\\
   \intertext{and}
  \cM^1_{-\infty} \ni \mu \mapsto p_\varphi(\mu) &= \norm{\varphi \mu}_1,
  \end{align*}
  respectively.
\end{proposition}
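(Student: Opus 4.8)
The plan is to prove the statement in two stages: first, that the seminorms $p_\varphi$ generate the strong topology $\beta(E,G)$ of a natural dual pair, and second, that this strong topology coincides with the defining (LB)-topology of $E$. Throughout, $E$ denotes one of the spaces $L^q_{-\infty}$, $\cS^0_{-\infty}$, $(\cM^1)_{-\infty}$, written as $\varinjlim_k (1+\abso{x}^2)^k F$ with $F = L^q$, $\cC_0$, $\cM^1$ respectively, and $G$ is the Fr\'echet space $(L^p)_\infty$ (with $\tfrac1p+\tfrac1q=1$), $(\cM^1)_\infty$, or $(\cC_0)_\infty$ paired with $E$ by integration. I would begin with the trivial inclusion: each $p_\varphi$ is continuous for the (LB)-topology, because on every step $(1+\abso{x}^2)^k F$ the multiplication $f \mapsto \varphi f$ maps continuously into the Banach space $F$ (the factor $\varphi\,(1+\abso{x}^2)^k$ being bounded for $\varphi \in \cS$). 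Hence the $p_\varphi$-topology is coarser than the (LB)-topology.

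The core of the argument rewrites $p_\varphi$ as the strong seminorm attached to a bounded subset of $G$. For $f \in L^q_{-\infty}$,
\[ p_\varphi(f) = \norm{\varphi f}_q = \sup_{\norm{h}_p \le 1} \abso{\int \varphi(x)\,h(x)\,f(x)\,\ud x} = \sup_{h \in B_{1,p}} \abso{\langle f, \varphi h\rangle} = p_{\varphi B_{1,p}}(f), \]
and $\varphi B_{1,p}$ is bounded in $G=(L^p)_\infty$. Since by Lemma~\ref{lemma1} every bounded subset of $G$ is contained in some $\varphi B_{1,p}$, the seminorms $p_\varphi$ generate exactly $\beta(E,G)$. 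The cases $\cS^0_{-\infty}$ and $(\cM^1)_{-\infty}$ are treated identically, replacing the $L^p$--$L^q$ duality by the $\cC_0$--$\cM^1$ duality and invoking the corresponding form of Lemma~\ref{lemma1}; e.g.\ $p_\varphi(\mu)=\norm{\varphi\mu}_1=\sup_{\norm{g}_\infty\le1}\abso{\langle \varphi g,\mu\rangle}$ with $\varphi B_{1,\infty}$ bounded in $(\cC_0)_\infty$.

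It then remains to identify the (LB)-topology of $E$ with $\beta(E,G)$. Each $E$ is a countable, and in fact regular, inductive limit of Banach spaces; being bornological it is quasibarrelled, so its topology equals $\beta(E,E')$ for the full continuous dual $E'$. When $F=L^q$ with $1\le q<\infty$ or $F=\cC_0$, the dual of the Banach base is the complementary space ($(L^q)'=L^p$, $(\cC_0)'=\cM^1$), so $E'=G$ and the two topologies agree immediately (for $1<q<\infty$ this is also clear from reflexivity of $G$). The genuinely delicate cases are $L^\infty_{-\infty}$ and $(\cM^1)_{-\infty}$, where $E'$ is strictly larger than $G$ (involving $(L^\infty)'$, resp.\ the bidual $(\cC_0)''$), so the quasibarrelled argument only furnishes $\beta(E,E')$ for a dual that is too large. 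For these I would instead realise $E$ as the strong dual $G'_\beta$ of $G=(L^1)_\infty$, resp.\ $(\cC_0)_\infty$ --- note that $\varinjlim_k (1+\abso{x}^2)^k F$ is precisely the inductive limit of the duals of the steps of $G$ --- and use that the strong dual of a \emph{distinguished} Fr\'echet space carries exactly this inductive-limit topology.

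The main obstacle is therefore reduced to the distinguishedness of the two non-reflexive Fr\'echet spaces $(L^1)_\infty$ and $(\cC_0)_\infty$. This is where the regular, polynomially decreasing structure of the weights $(1+\abso{x}^2)^{-k}$ is essential: these are regularly decreasing weighted Fr\'echet spaces, for which the projective description of the associated dual inductive limit is valid in the sense of Bierstedt--Meise--Summers~\cite{BMS}, so that the strong-dual topology coincides with the (LB)-topology. Establishing this weight condition (equivalently, the distinguishedness) is the crux; granting it, combining it with the polar computation of the second step yields the asserted generating system of seminorms in every case.
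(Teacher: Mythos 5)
Your core computation coincides with the paper's: rewriting $p_\varphi$ as the seminorm $p_{\varphi B_{1,p}}$ of uniform convergence on the bounded set $\varphi B_{1,p}\subset G$ and using Lemma~\ref{lemma1} to absorb an arbitrary bounded subset of the Fr\'echet predual $G$ into a set of this form is exactly the inequality $p_B(f)\le\norm{\varphi f}_q$ in the paper's proof. What differs is how the (LB)-topology is identified with a topology of uniform convergence on bounded subsets of $G$. The paper simply \emph{asserts} this identification for the dual cases $(L^q)_{-\infty}=((L^p)_\infty)'$, $1\le p<\infty$, and $(\cM^1)_{-\infty}=((\cC_0)_\infty)'$, and then disposes of $(L^1)_{-\infty}$ and $(\cC_0)_{-\infty}$ by the (likewise unproved) claim that their topologies are induced from $(\cM^1)_{-\infty}$ and $(L^\infty)_{-\infty}$. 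You instead treat directly all cases in which $E'=G$, namely $(L^q)_{-\infty}$ for $1\le q<\infty$ and $(\cC_0)_{-\infty}$, via a barrelledness argument (one correction: quasibarrelledness only yields uniform convergence on the \emph{strongly} bounded subsets of $E'$; what you need is barrelledness, which every (LB)-space has, to obtain $\beta(E,E')$, plus the routine Banach--Steinhaus remark that $\sigma(G,E)$-bounded subsets of $G$ are bounded in its Fr\'echet topology), and you isolate $(L^\infty)_{-\infty}$ and $(\cM^1)_{-\infty}$ as the cases where the identification is equivalent to distinguishedness of the non-reflexive preduals $(L^1)_\infty$ and $(\cC_0)_\infty$. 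This reduction is correct, and it makes explicit precisely the point that the paper passes over in silence: the paper's assertion for $p=1$ and for $\cM^1$ \emph{is} that distinguishedness statement.

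The one step you leave ``granted'' is thus the sole gap in your write-up; it is a true and standard fact, and assuming it is no worse than the paper's tacit assumption of the same thing. To close it without the full machinery of \cite{BMS}, observe that $(L^1)_\infty$ and $(\cC_0)_\infty$ are quasinormable: writing $U_k$ for the unit ball of the $k$-th defining norm $f\mapsto\norm{(1+\abso{x}^2)^kf}$, for every $\varepsilon>0$ choose $R$ with $(1+R^2)^{-1}\le\varepsilon$ and split each $f\in U_{k+1}$ into its parts inside and outside the ball $B_R$; this gives $U_{k+1}\subset\varepsilon U_k+B_\varepsilon$ with $B_\varepsilon$ bounded, and quasinormable metrizable spaces are distinguished. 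With this supplement your proof is complete, and in one respect it is cleaner than the paper's: the paper's induced-topology claim for $(\cC_0)_{-\infty}\subset(L^\infty)_{-\infty}$ is not innocent ($\cC_0$ is not complemented in $L^\infty$), whereas your direct treatment of $(\cC_0)_{-\infty}$ through $E'=(\cM^1)_\infty$ and Lemma~\ref{lemma1} for $\cM^1$ avoids it altogether.
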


\begin{proof}
  Let $1 \le p < \infty$ and $B$ be a bounded subset of $L^p_\infty$ or $\cS^0_\infty$. Then the seminorms $p_B$, defined by $p_B(f) = \sup_{g \in B} \abso{\langle g, f \rangle}$, $f \in (L^q)_{-\infty}$, $f \in \cO^0_C$, or $p_B(\mu) = \sup_{g \in B} \abso{\langle g, \mu \rangle }$, $\mu \in (\cM^1)_{-\infty}$, generate the topology of $(L^q)_{-\infty}$, $\cO_C^0$ or $(\cM^1)_{-\infty}$, respectively. We obtain for $f \in (L^q)_{-\infty}$, $\cO^0_C$ or $\mu \in (\cM^1)_{-\infty}$:
  \[ p_B(f) \le \sup_{h \in B_{1,p}} \abso{\langle \varphi h, f \rangle} = \norm{\varphi f}_q \]
  and
  \[ p_B(\mu) \le \norm{\varphi \mu}_1. \]
  The topology of $(L^1)_{-\infty}$ is the induced topology of $(\cM^1)_{-\infty}$. The topology of $(\cC_0)_{-\infty} = \cS^0_{-\infty} = \cO^0_C$ is the induced topology of $(L^\infty)_{-\infty}$.
\end{proof}

\begin{remarks}
  \begin{enumerate}[label=(\arabic*)]
  \item The projective description of the (LB)-topology of $(L^1)_{-\infty}$ is given for the first time in \cite[3.1, Satz, p.~19 and p.~25]{D}.
  \item The projective description of the topology of $\cS^0_{-\infty} = \cO^0_C$ is given in \cite[Prop. 1 and Prop.~2]{OW}.
  \end{enumerate}
\end{remarks}

\begin{proposition}\label{prop18}
  A projective description of the (LB)-topology of the space $\cO_C^m = \cS^m_{-\infty}$, $m \in \bN_0$, is given by the seminorms $p_\varphi$, $\varphi \in \cS$, defined by:
  \[ \cO_C^m \ni f \mapsto p_\varphi(f) = \sup_{\abso{\alpha} \le m } \norm{\varphi \pd^\alpha f}_{\infty}. \]
\end{proposition}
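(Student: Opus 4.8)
The plan is to realise the topology of $\cO_C^m$ as the \emph{initial} topology induced by the derivative maps $\pd^\alpha\colon \cO_C^m \to \cO_C^0$, $\abso{\alpha}\le m$, and then to read off the seminorms from Proposition~\ref{prop17} applied to each factor $\cO_C^0 = \cS^0_{-\infty}$. Writing $p^{(0)}_\psi(g) = \norm{\psi g}_\infty$ for the seminorms of Proposition~\ref{prop17}, the seminorm in the statement factors as
\[ p_\varphi(f) = \sup_{\abso{\alpha}\le m} p^{(0)}_\varphi(\pd^\alpha f). \]
On the one hand every pullback $p^{(0)}_\psi\circ\pd^\alpha$ is dominated by $p_\psi$ (use the same $\psi$ in each slot); on the other hand each $p_\varphi$ is the finite maximum of the $p^{(0)}_\varphi\circ\pd^\alpha$. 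Hence the family $\{p_\varphi:\varphi\in\cS\}$ and the family $\{p^{(0)}_\psi\circ\pd^\alpha:\psi\in\cS,\ \abso{\alpha}\le m\}$ generate the same locally convex topology, and by Proposition~\ref{prop17} the latter is precisely the initial topology $\tau_{\mathrm{init}}$ with respect to the maps $\pd^\alpha\colon\cO_C^m\to\cO_C^0$. Everything therefore reduces to the identity $\tau_{\mathrm{init}} = \tau_{LB}$, where $\tau_{LB} = \varinjlim_k (\dot\cB^m)_{-k}$ is the $(LB)$-topology.

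The inclusion $\tau_{\mathrm{init}}\subseteq\tau_{LB}$ is the easy half: by Leibniz' rule $\pd^\alpha[(1+\abso{x}^2)^k g] = \sum_{\beta\le\alpha}\binom{\alpha}{\beta}\pd^{\alpha-\beta}(1+\abso{x}^2)^k\,\pd^\beta g$, where each polynomial factor is bounded by a multiple of $(1+\abso{x}^2)^k$, so $\pd^\alpha\colon (\dot\cB^m)_{-k}\to(\dot\cB^0)_{-k}$ is bounded, and by the universal property of the inductive limit $\pd^\alpha\colon\cO_C^m\to\cO_C^0$ is $\tau_{LB}$-continuous. For the reverse inclusion I pass to the diagonal map $T\colon\cO_C^m\to(\cO_C^0)^N$, $f\mapsto(\pd^\alpha f)_{\abso{\alpha}\le m}$ with $N=\#\{\alpha:\abso{\alpha}\le m\}$, whose pullback topology is exactly $\tau_{\mathrm{init}}$; the claim is that $T$ is a topological embedding. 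On the $k$-th step $T$ restricts to $T_k\colon(\dot\cB^m)_{-k}\to((\dot\cB^0)_{-k})^N$, and the Leibniz estimate above, together with its counterpart for $\pd^\alpha[(1+\abso{x}^2)^{-k}f]$ (whose rational factors now decay like $(1+\abso{x}^2)^{-k}$), shows that the norm of $(\dot\cB^m)_{-k}$ is equivalent to $\sup_{\abso{\alpha}\le m}\norm{\pd^\alpha\,\cdot\,}_{(\dot\cB^0)_{-k}}$. Thus each $T_k$ is an isomorphism onto a closed subspace.

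Since finite products commute with countable inductive limits, $(\cO_C^0)^N = \varinjlim_k ((\dot\cB^0)_{-k})^N$, and $T=\varinjlim_k T_k$ is a bijective continuous map of $\cO_C^m$ onto the subspace $T(\cO_C^m)$, meeting the $k$-th product step exactly in the closed subspace $T_k((\dot\cB^m)_{-k})$. As each $T_k$ is a step-wise isomorphism, $T$ identifies the inductive system $((\dot\cB^m)_{-k})_k$ with $(T_k((\dot\cB^m)_{-k}))_k$, so $T_\ast\tau_{LB}$ is the inductive limit of the $T_k((\dot\cB^m)_{-k})$. Because $\cO_C^m$ is a regular $(LB)$-space and these are closed subspaces of the steps, the subspace topology that $(\cO_C^0)^N$ induces on $T(\cO_C^m)$ coincides with that inductive limit topology; equivalently $T^{-1}$ is continuous on $T(\cO_C^m)$. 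This gives $\tau_{LB}\subseteq\tau_{\mathrm{init}}$ and finishes the identification.

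I expect the genuine obstacle to be exactly this last point: showing that the non-strict inductive limit $\cO_C^m$ carries the subspace topology inherited through $T$, i.e.\ that $T$ is relatively open. For a strict limit this would be automatic, but here the argument must lean on the regularity of $\cO_C^m$ and on the step-wise equivalence of norms. The essential difficulty — producing a \emph{single} Schwartz weight $\varphi$ that controls all steps at once — is already packaged inside Proposition~\ref{prop17} (and ultimately in Lemma~\ref{lemma1}), which is why the reduction to the case $m=0$ is worthwhile. The remaining ingredients, namely the two applications of Leibniz' rule and the domination $p^{(0)}_\psi\circ\pd^\alpha\le p_\psi$, are entirely routine.
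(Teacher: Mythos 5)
Your reduction is exactly the paper's: its entire proof consists of invoking the projective limit representation $\cO_C^m=\varprojlim_{\abso{\alpha}\le m}(\pd^\alpha)^{-1}\cO_C^0$ --- which is precisely your identity $\tau_{\mathrm{init}}=\tau_{LB}$ --- together with Proposition~\ref{prop17}; the paper cites this representation (it is Proposition~2 of \cite{OW}), whereas you attempt to prove it. Your attempt has a genuine gap at the very step you flag as the crux. The inference ``$\cO_C^m$ is a regular (LB)-space and each $T_k$ is an isomorphism onto a closed subspace of the $k$-th step, hence the topology induced by $(\cO_C^0)^N$ on $T(\cO_C^m)$ equals the inductive limit topology $\varinjlim_k T_k((\dot\cB^m)_{-k})$'' is not valid. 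This is the classical problem of \emph{limit subspaces} of non-strict inductive limits (discussed, e.g., in \cite{B}): a subspace of an (LB)-space meeting every step in a closed subspace need not carry, as its induced topology, the inductive limit topology of its traces, and regularity does not repair this. What regularity (of $\cO_C^0$, via your step-wise norm equivalence) actually yields is only that $\tau_{\mathrm{init}}$ and $\tau_{LB}$ have the same \emph{bounded} sets. Since $\tau_{LB}$ is bornological, this says exactly that $\tau_{LB}$ is the bornologification of $\tau_{\mathrm{init}}$; the two coincide if and only if $\tau_{\mathrm{init}}$ is bornological, i.e.\ if and only if the closed subspace $T(\cO_C^m)$ of $(\cO_C^0)^N$ is bornological in its induced topology. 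Closed subspaces of bornological spaces need not be bornological, so this is the actual content of the representation, and your argument never addresses it --- ``relative openness of $T$'' is asserted, not proved.

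Two ways to close the gap. Either cite the representation, as the paper does (\cite[Prop.~2]{OW}); or argue by duality, which bypasses the limit-subspace problem entirely: $\cO_C^m$ is barrelled (being an (LB)-space), hence $\tau_{LB}=\beta(\cO_C^m,(\cO_C^m)')$, so it suffices to dominate each seminorm $p_B$, $B\subset\cO_C^{\prime m}$ bounded, by some $p_\varphi$; this follows from the characterisation of bounded subsets of $\cO_C^{\prime m}$ in Proposition~\ref{prop26} (a single $\varphi\in\cS$ with $B\subset\pd^\alpha(\varphi B_{1,1})$ for all $\abso{\alpha}\le m$), exactly as in the proof of Proposition~\ref{prop1}. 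This is the ``third proof'' indicated in the remark following Proposition~\ref{prop26}. The remaining ingredients of your proposal --- the two Leibniz estimates, the domination $p^{(0)}_\psi\circ\pd^\alpha\le p_\psi$, the closedness of $T(\cO_C^m)$, and the equality of bounded sets --- are all correct; only the key step is unsupported.
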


\begin{proof}
  It follows from the projective limit representation
  \[ \cO_C^m = \varprojlim_{\abso{\alpha} \le m} (\pd^\alpha)^{-1} \cO^0_C\]
  and Proposition~\ref{prop17}.
\end{proof}

\begin{remark}
  A more direct proof of Proposition~\ref{prop18} is given in \cite[Proposition 2]{OW}.
\end{remark}

\section{Bounded sets in the Fr\'echet spaces $L^{\lowercase{q}}_{\textrm{\lowercase{loc}}}$, $\cE^{\lowercase{m}}$, $\cM$, $\cD^{\prime {\lowercase{m}}}$, $\lowercase{m} \in \bN_0$, and topology generating seminorms in the strict (LB)-spaces $L^{\lowercase{p}}_{\lowercase{comp}}$, $\cM_{\lowercase{comp}}$, $\cD^0 = \cK$ and $\cD^{\lowercase{m}}$, $\lowercase{m} \in \bN_0$.}\label{sec6}

The function and distribution spaces considered in this section, i.e., $L^p_\textrm{comp}(\Omega)$, $L^q_\textrm{loc}(\Omega)$, $\cE^m(\Omega)$, $\cD^m(\Omega)$ and $\cD^{\prime m}(\Omega)$, $m \in \bN_0$, are defined on an open subset $\Omega$ of $\bR^n$. Generally, we shall omit the set $\Omega$.

Obviously, the statements in this section on the spaces $L^p_\textrm{comp}(\Omega)$, $L^q_\textrm{loc}(\Omega)$, $\cE^0(\Omega)$, $\cE^{\prime 0}(\Omega) = \cM_\textrm{comp}(\Omega)$, $\cD^0(\Omega) = \cK(\Omega)$ and $\cD^{\prime 0}(\Omega) = \cM(\Omega)$ remain valid if the set $\Omega$ is replaced by a locally compact space $X$.

The spaces $\cE^m$, \cite[Ex. 7, p.~364]{H}, $\cD^m$, $\cD^{\prime m}$ are treated in textbooks on distribution theory. $\cD^0 = \cK$ is the space of test functions whose dual defines the Radon measures (see \cite[p.~96]{Dieu2}; \cite[Chap. II, D\'ef.~2, p.~47]{BourbakiInt}), $L^1_\textrm{loc}$ is the ``biggest space of functions'' which furnishes ``distributions by integration''. The spaces $L^p_\textrm{loc}$ and $L^q_\textrm{comp}$ considered in~\cite{Hor} are special cases of local spaces of distributions treated in \cite{HorvathLocal}.

First we characterise bounded sets in the Fr\'echet spaces $L^q_\textrm{loc}$, $\cE^0$, $\cM$ by functions $\varphi \in \cE$ in contrast to the characterisations of bounded sets in $(L^p)_{\infty}$, $(\cC_0)_\infty = \cS^0_\infty$, $\cM^1_{\infty}$ (in Lemma~\ref{lemma1}) by functions $\varphi \in \cS$.

\begin{lemma}\label{lemma4}
  Let $\Omega \subseteq \bR^n$ be open and $B \subseteq L^q_\textrm{loc}(\Omega)$, $1 \le q \le \infty$, or $B \subset \cE^0(\Omega)$ or $B \subset \cM(\Omega)$. Then $B$ is bounded if and only if there is $\varphi \in \cE_+(\Omega)$ such that $B \subset \varphi B_{1,q}$ or $B \subset \varphi B_{1,\infty}$ or $B \subset \varphi B_{1,1}$, respectively. Here, $B_{1,q} = \{ f \in L^q(\Omega): \norm{f}_q \le 1 \}$, $B_{1,\infty} = \{ f \in \cC_0 : \norm{f}_\infty \le 1 \}$ or $B_{1,1} = \{ \mu \in \cM^1(\Omega) : \norm{\mu} \le 1\}$.
\end{lemma}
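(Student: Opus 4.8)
The plan is to follow the pattern of Lemma~\ref{lemma1}, with the weight $(1+\abso{x}^2)^{-k}$ replaced by an exhaustion of $\Omega$ by compact sets and the Schwartz function replaced by a smooth positive function which is allowed to grow arbitrarily fast towards the ``boundary'' of the locally compact space $\Omega$. I would carry out the case $L^q_{\textrm{loc}}$, $1\le q<\infty$, in full and then indicate the minor changes for $\cE^0$, $\cM$ and $q=\infty$. The implication ``$\Leftarrow$'' is immediate and uniform across the three spaces: if $f=\varphi g$ with $g\in B_{1,q}$, then for every compact $K\subset\Omega$ one has $\norm{f}_{L^q(K)}\le (\sup_K\abso{\varphi})\,\norm{g}_q\le \sup_K\abso{\varphi}<\infty$, which is precisely boundedness in the Fr\'echet space $L^q_{\textrm{loc}}(\Omega)$; the same one-line estimate, with the sup-norm resp.\ the total variation in place of $\norm{\cdot}_q$, settles $\cE^0$ and $\cM$.

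For ``$\Rightarrow$'' I fix an exhaustion $K_1\subset K_2\subset\cdots$ of the $\sigma$-compact set $\Omega$ with $K_j\subset K_{j+1}^{\circ}$ and $\bigcup_j K_j=\Omega$, put $K_0=\emptyset$ and $A_j=K_{j+1}\setminus K_j$, and set $c_j\coleq\sup_{f\in B}\norm{f}_{L^q(K_j)}$, which is finite since $B$ is bounded. Then $\int_{A_j}\abso{f}^q\le c_{j+1}^q$ uniformly in $f\in B$, so as soon as I produce $\varphi\in\cE_+(\Omega)$ with $\varphi\ge d_j$ on $A_j$ for $d_j\coleq 2^{(j+1)/q}c_{j+1}$, I obtain
\[
  \int_\Omega\Big\lvert\frac{f}{\varphi}\Big\rvert^q\,\ud x=\sum_{j\ge 0}\int_{A_j}\Big\lvert\frac{f}{\varphi}\Big\rvert^q\,\ud x\le\sum_{j\ge 0}d_j^{-q}c_{j+1}^q=\sum_{j\ge 0}2^{-(j+1)}=1,
\]
i.e.\ $f/\varphi\in B_{1,q}$ and $B\subset\varphi B_{1,q}$.

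The only thing left is the construction of such a $\varphi$, which is also where the bookkeeping lives. I would take smooth cut-offs $\theta_j\in\cE(\Omega)$ with $\theta_j\equiv 0$ on $K_j$ and $\theta_j\equiv 1$ on $\Omega\setminus K_{j+1}^{\circ}$ (these exist since $K_j$ and $\Omega\setminus K_{j+1}^{\circ}$ are disjoint and $K_j$ is compact), and set $\varphi\coleq d_0+\sum_{j\ge 0}d_{j+1}\theta_j$. On any $K_m$ only the finitely many terms with $j<m$ fail to vanish, so the series is locally finite and $\varphi\in\cE_+(\Omega)$ with $\varphi\ge d_0>0$. Moreover, on $A_k$ one has $\theta_j\equiv 1$ for every $j\le k-1$, whence $\varphi\ge\sum_{j=0}^{k-1}d_{j+1}\ge d_k$, as required.

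The cases $\cM$ and $\cE^0$ run along the same lines: for $\cM$ one estimates $\int_\Omega\varphi^{-1}\,\ud\abso{\mu}\le\sum_j d_j^{-1}c_{j+1}\le 1$ after choosing $d_j=2^{j+1}c_{j+1}$, giving $\varphi^{-1}\mu\in B_{1,1}$. The one genuinely different point is $\cE^0$, where $B_{1,\infty}=\{g\in\cC_0:\norm{g}_\infty\le 1\}$ forces $f/\varphi$ not merely to be bounded by $1$ but to \emph{vanish} at the boundary and at infinity; I would secure this by choosing $d_j$ with $d_j/c_{j+1}\to\infty$, e.g.\ $d_j=(j+1)\,c_{j+1}$, so that $\abso{f}/\varphi\le c_{j+1}/d_j\to 0$ uniformly on the shells $A_j$, which yields $f/\varphi\in\cC_0(\Omega)$ with $\norm{f/\varphi}_\infty\le 1$. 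I therefore expect no conceptual obstacle; the real work is the telescoping estimate together with the careful choice of the decay rate $d_j$ in the $\cC_0$-valued case.
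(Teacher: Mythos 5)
Your proof is correct up to one small corner case, and its construction differs in the details from the paper's. The paper also reduces everything to a locally finite smooth construction, but it uses a partition of unity $(\varphi_k)_{k}$ subordinate to a locally finite cover of $\Omega$ by relatively compact open sets: setting $a(k)=1+\sup_{f\in B}\norm{\varphi_k f}_q$, it builds the \emph{reciprocal} of the weight directly, $1/\varphi=\sum_k 2^{-k}\varphi_k/a(k)$, and then gets $\norm{f/\varphi}_q=\norm{\sum_k 2^{-k}\varphi_k f/a(k)}_q\le\sum_k 2^{-k}=1$ from Minkowski's inequality. You instead build $\varphi$ itself by stacking Urysohn cutoffs along the shells of a compact exhaustion and estimate by additivity of the integral over the disjoint shells. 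Both routes work. Yours buys a completely uniform treatment of the three cases --- in particular you handle $\cM(\Omega)$ by the same shell estimate, whereas the paper switches there to the projective limit representation $\cM(\Omega)=\varprojlim(\cD^0_K(\Omega))'$ and continuous cutoffs. The paper's version buys the $\cC_0$-membership in the $\cE^0$ case for free: $f/\varphi=\sum_k 2^{-k}\varphi_k f/a(k)$ is a uniformly convergent series of compactly supported continuous functions, hence automatically lies in $\cC_0(\Omega)$, with no need for the extra decay condition $d_j/c_{j+1}\to\infty$ that your construction requires (and which you correctly identified as the genuinely new point in that case).

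The repair: you must normalise $c_j\coleq 1+\sup_{f\in B}\norm{f}_{L^q(K_j)}$ (or take a maximum with $1$), exactly as the paper puts $a(k)=1+\sup(\cdots)$. As written, if $c_1=0$ --- for instance $B=\{0\}$, or all elements of $B$ vanish a.e.\ on the first few $K_j$ --- then $d_0=0$ and possibly further $d_j=0$, so your $\varphi=d_0+\sum_{j\ge 0}d_{j+1}\theta_j$ vanishes on part of $\Omega$ and is not an element of $\cE_+(\Omega)$; moreover the shell estimate $d_j^{-q}c_{j+1}^q$ degenerates to $0/0$. With the normalisation one has $\varphi\ge d_0>0$ everywhere, the shells on which $B$ vanishes contribute $0$ in any case, and your argument goes through verbatim for $L^q_{\textrm{loc}}$, $\cE^0$ and $\cM$, as well as for $q=\infty$ (where one takes a supremum over shells instead of a sum).
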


\begin{proof}
  We show the characterisation for $L^q_{\mathrm{loc}}(\Omega)$. The proof for $\cE^0(\Omega)$ can be obtained by replacing all occurrences of the $L^q$-norm with the $L^\infty$-norm. For the case of $\cM(\Omega)$ we have to take into account the representation as the projective limit $\cM(\Omega)=\varprojlim (\cD^0_K(\Omega))'$ which follows from the fact that $\cK(\Omega)=\varinjlim \cD^0_K(\Omega)$ is a strict inductive limit, see~\cite{Sch5} and Proposition~1 in~\cite{B}. For $\cM(\Omega)$ the characteristic function $\chi_K$ has to be replaced by a continuous function which is one on $K$ and vanishes outside a neighbourhood of $K$.
  
  ``$\Leftarrow$'': obvious in virtue of $\norm{\chi_K f }_q \le \norm{\chi_K \varphi}_{\infty}$ for all $f\in B$ if $K \subset \Omega$ is a compact subset and $\chi_K$ its characteristic function. 

  ``$\Rightarrow$'': Let $(\Omega_k)_{k \in \bN}$ be a locally finite open cover of $\Omega$ by relatively compact subsets $\Omega_k$. Then, there exist functions $\varphi_k \in \cD(\Omega)$, $\varphi_k(x) \ge 0$ for all $x \in \Omega$ such that $\supp \varphi_k \subset \Omega_k$, the family $(\supp \varphi_k)_{k \in \bN}$ is locally finite and $\sum_{k=1}^\infty \varphi_k(x) = 1$ if $x \in \Omega$ \cite[Theorem 4, p.~168]{H}. By defining $a(k) = 1 + \sup_{f \in B} \norm{\varphi_k \cdot f}_q$ we see that the function
  \[ \bR^n \to \bR, \quad x \mapsto \sum_{k=1}^\infty 2^{-k} \frac{\varphi_k(x)}{a(k)} \]
  is well-defined, positive and belongs to $C^\infty(\Omega$). Hence,
  \[ \varphi \coleq \left( \sum_{k=1}^\infty 2^{-k} \frac{\varphi_k}{a(k)}\right)^{-1} \in \cE_+(\Omega) \]
  and
  \[ \norm{ \frac{1}{\varphi} f }_q = \norm{\sum_{k=1}^\infty 2^{-k} \frac{\varphi_k f}{a(k)}}_q \le \sum_{k=1}^\infty 2^{-k} \frac{\norm{\varphi_k f}_q}{a(k)} \le \sum_{k=1}^\infty 2^{-k} = 1 \]
  if $f \in B$. Thus, $B \subset \varphi B_{1,q}$.
\end{proof}

\begin{proposition}\label{prop20}
  The strict (LB)-topology of the spaces $(L^p_\textrm{comp}, \beta(L^p_\textrm{comp}, L^q_\textrm{loc}))$, $1 \le p \leq \infty$, $1/p + 1/q = 1$, $(\cM_\textrm{comp}, \beta(\cM_\textrm{comp}, \cE^0))$, $\cM_\textrm{comp} \cong \cE^{\prime 0}$, $(\cK, \beta(\cK, \cM))$, $\cK = \cD^0$, is generated by the seminorms $p_\varphi$, $\varphi \in \cE_+$,
  \begin{align*}
    L^p_\textrm{comp} \ni f &\mapsto p_\varphi(f) = \norm{\varphi f}_p, \\
    \cM_\textrm{comp} \ni \mu & \mapsto p_\varphi(\mu) = \norm{\varphi \mu}_1, \\
    \cK \ni f &\mapsto p_\varphi(f) = \norm{\varphi f}_\infty,
  \end{align*}
  respectively.

  These are projective descriptions of the strict (LB)-topologies. 
\end{proposition}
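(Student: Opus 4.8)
The plan is to use, for each of the three dual pairs, the defining description of the strong topology: $\beta(E,F)$ is generated by the seminorms $p_B(e)=\sup_{f\in B}\abso{\langle e,f\rangle}$ with $B$ ranging over the bounded, absolutely convex subsets of the predual $F$ (here $F=L^q_{\textrm{loc}}$, $\cE^0$ and $\cM$, respectively). The decisive reduction is then Lemma~\ref{lemma4}: it asserts both that every bounded $B$ is contained in some $\varphi B_{1,q}$ with $\varphi\in\cE_+$ \emph{and}, conversely, that each such $\varphi B_{1,q}$ is bounded. Hence the family $\{\varphi B_{1,q}:\varphi\in\cE_+\}$ is a fundamental system of bounded sets in $F$, and it suffices to identify the seminorm $p_{\varphi B_{1,q}}$ with the ``function''-seminorm $p_\varphi$.

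The core is a single duality identity, which I would carry out first for $L^p_{\textrm{comp}}$. For $f\in L^p_{\textrm{comp}}$ and $\varphi\in\cE_+$ the product $\varphi f$ lies in $L^p$, and shifting $\varphi$ across the pairing gives
\[
  p_{\varphi B_{1,q}}(f)=\sup_{\norm{h}_q\le 1}\abso{\langle f,\varphi h\rangle}=\sup_{\norm{h}_q\le 1}\abso{\langle \varphi f,h\rangle}=\norm{\varphi f}_p=p_\varphi(f),
\]
the middle equality being the $L^p$--$L^q$ duality $\norm{g}_p=\sup_{\norm{h}_q\le 1}\abso{\int gh}$, valid for all $1\le p\le\infty$. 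Combined with Lemma~\ref{lemma4} this yields $p_B\le p_\varphi$ for a suitable $\varphi$, so the function-seminorm topology is finer than $\beta$; read in the other direction it exhibits each $p_\varphi=p_{\varphi B_{1,q}}$ as a genuine strong seminorm, so that topology is also coarser. The two therefore coincide.

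The cases $\cM_{\textrm{comp}}\cong\cE^{\prime 0}$ and $\cK=\cD^0$ then run verbatim, the only change being the duality invoked in the middle step: for $\cM_{\textrm{comp}}$ (paired with $\cE^0$) one uses $\norm{\varphi\mu}_1=\sup\{\abso{\langle \varphi\mu,g\rangle}:g\in\cC_0,\ \norm{g}_\infty\le1\}$ coming from $(\cC_0)'=\cM^1$, noting that $\varphi\mu$ is a compactly supported and hence integrable measure; for $\cK$ (paired with $\cM$) one uses $\norm{\varphi f}_\infty=\sup\{\abso{\langle \varphi f,\mu\rangle}:\mu\in B_{1,1}\}$ for $\varphi f\in\cC_0$, realised for instance by point masses. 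I do not expect a serious obstacle here, since the proposition essentially \emph{is} the observation of Lemma~\ref{lemma4} followed by elementary duality; the only points demanding care are the book-keeping ones---verifying in each case that $\varphi f$ (resp.\ $\varphi\mu$) lands in the space for which the duality is valid, and that the correct unit ball $B_{1,q}$, $B_{1,\infty}$ or $B_{1,1}$ is paired with the matching norm. Finally, since the topology in question is the strong topology $\beta(E,F)$, which for these spaces coincides with the strict inductive limit topology, the coincidence just established is precisely the asserted projective description of the strict (LB)-topologies.
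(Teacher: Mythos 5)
Your proof is correct and follows essentially the same route as the paper's: both reduce the strong topology to the seminorms $p_B$, invoke Lemma~\ref{lemma4} to replace a bounded set $B$ by $\varphi B_{1,q}$ with $\varphi \in \cE_+$, and conclude via the $L^p$--$L^q$ (resp.\ $\cC_0$--$\cM^1$) duality identity $\sup_{h \in B_{1,q}}\abso{\langle \varphi f, h\rangle} = \norm{\varphi f}_p$. The only cosmetic difference is in the reverse inclusion, where you observe that each $\varphi B_{1,q}$ is itself bounded in the predual (the converse direction of Lemma~\ref{lemma4}) so that $p_\varphi$ is a genuine strong seminorm, while the paper instead invokes the continuity of $f \mapsto \varphi f$ from $L^p_\textrm{comp}$ to $L^p$ --- two formulations of the same fact; you also write out the $\cM_\textrm{comp}$ and $\cK$ cases which the paper leaves as analogous.
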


\begin{proof}
  We only treat the case of seminorms in $L^p_\textrm{comp}$, $1<p\leq\infty$. If $B$ is a bounded set in $L^q_\textrm{loc}$ then the seminorms $p_B$, defined by
  \[ L^p_\textrm{comp} \ni f \mapsto p_B(f) = \sup_{g \in B} \abso{\langle f, g \rangle}, \]
  generate the topology of $L^p_\textrm{comp}$. There exists $\varphi \in \cE_+$ such that $B \subset \varphi B_{1,q}$ by Lemma~\ref{lemma4}. Hence,
  \[ p_B(f) \le \sup_{h \in B_{1,q}} \abso{ {}_{L^q} \langle h, \varphi f \rangle_{L^p}} = \norm{\varphi f}_p. \]
  Conversely, the continuity of the mapping $L^p_\textrm{comp} \to L^p$, $f \mapsto \varphi f$ furnishes the equivalence of $\beta(L^p_\textrm{comp}, L^q_\textrm{loc})$ and the topology generated by $p_\varphi$, $\varphi \in \cE_+$.
\end{proof}

We show in the next Proposition that the (LB)-topologies of the spaces $L^p_\textrm{comp}$, $\cM_\textrm{comp}$ and $\cK$ also can be identified with weak and strong topologies of operator spaces in which they can be embedded by multiplication:

\begin{proposition}\label{prop21}
  The topologies of $L^p_\textrm{comp}$, $\cM_\textrm{comp}$ and $\cK$ are the induced topologies of the spaces $\cL_s(\cE, L^p)$ or $\cL_b(\cE, L^p)$, $\cL_s(\cE, \cM^1)$ or $\cL_b(\cE, \cM^1)$ and $\cL_s(\cE, \cB^0)$ or $\cL_b(\cE, \cB^0)$, respectively, if they are ``embedded by multiplication'', i.e., $L^p_\textrm{comp} \to \cL(\cE, L^p)$, $f \mapsto [\varphi \mapsto \varphi f]$, etc.
\end{proposition}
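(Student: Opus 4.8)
The plan is to treat the three cases uniformly. Write $(E,F)$ for one of the pairs $(L^p_{\textrm{comp}}, L^p)$, $(\cM_{\textrm{comp}}, \cM^1)$, $(\cK, \cB^0)$, and let $\iota\colon E \to \cL(\cE, F)$ be the multiplication embedding $f \mapsto [\varphi \mapsto \varphi f]$. This is well defined because multiplying a compactly supported element of $E$ by a smooth function yields an element of the Banach space $F$, and it is injective since choosing $\varphi \in \cE$ equal to $1$ on the support of $f$ gives $\iota(f)(\varphi) = f$. Denote by $\tau_s$ and $\tau_b$ the topologies that $\cL_s(\cE, F)$ and $\cL_b(\cE, F)$ induce on $E$ via $\iota$, and let $\tau$ be the original strict (LB)-topology. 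The goal $\tau = \tau_s = \tau_b$ would then follow from the chain $\tau = \tau_s \le \tau_b \le \tau$.

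First I would identify $\tau_s$. Since $F$ is a Banach space, $\tau_s$ is generated by the seminorms $f \mapsto \norm{\iota(f)(\varphi)}_F = \norm{\varphi f}_F$ with $\varphi$ ranging over $\cE$, where $\norm{\cdot}_F$ is $\norm{\cdot}_p$, the total variation norm, or $\norm{\cdot}_\infty$ in the three respective cases. Every such seminorm is dominated by one of the same type with $\varphi$ replaced by the positive majorant $\psi = \sqrt{\abso{\varphi}^2 + 1} \in \cE_+$, and conversely each $\varphi \in \cE_+$ already lies in $\cE$; hence the families $\{\norm{\varphi\,\cdot}_F : \varphi \in \cE\}$ and $\{\norm{\varphi\,\cdot}_F : \varphi \in \cE_+\}$ generate the same topology. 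By Proposition~\ref{prop20} the latter generates exactly the strict (LB)-topology $\tau$, so $\tau_s = \tau$.

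Next, $\tau_s \le \tau_b$ is immediate because the topology of pointwise convergence is coarser than that of uniform convergence on bounded sets. It therefore remains to prove $\tau_b \le \tau$, i.e.\ that $\iota\colon (E, \tau) \to \cL_b(\cE, F)$ is continuous. The seminorms defining $\tau_b$ are $f \mapsto \sup_{\varphi \in A} \norm{\varphi f}_F$ for $A \subset \cE$ bounded, and continuity on the strict (LB)-space $E$ may be checked on each Banach step, i.e.\ on the elements supported in a fixed compact $K$. There one has the elementary estimate $\norm{\varphi f}_F \le (\sup_{x \in K} \abso{\varphi(x)})\,\norm{f}_F$, and since a bounded set $A \subset \cE$ is uniformly bounded on the compact $K$, the constant $\sup_{\varphi \in A}\sup_{x \in K}\abso{\varphi(x)}$ is finite; this is precisely the hypocontinuity of the multiplication $E \times \cE \to F$ invoked for the analogous spaces in the introduction. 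Hence each $\tau_b$-seminorm is dominated by a continuous seminorm of $\tau$, which gives $\tau_b \le \tau$ and thus $\tau = \tau_s = \tau_b$, i.e.\ the original topology is induced by both $\cL_s(\cE, F)$ and $\cL_b(\cE, F)$.

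The only point requiring genuine care is the last one, the continuity of $\iota$ into the strong operator topology; everything else is bookkeeping. In the three cases it rests respectively on $\norm{\varphi f}_p \le (\sup_{x\in K}\abso{\varphi(x)})\norm{f}_p$, on the analogous total-variation estimate $\norm{\varphi \mu}_1 \le (\sup_{x\in K}\abso{\varphi(x)})\norm{\mu}_1$ for $\mu$ supported in $K$, and on $\norm{\varphi f}_\infty \le (\sup_{x\in K}\abso{\varphi(x)})\norm{f}_\infty$, together with the facts that bounded subsets of the strict (LB)-spaces $L^p_{\textrm{comp}}$, $\cM_{\textrm{comp}}$, $\cK$ are contained in a single Banach step and that bounded subsets of $\cE$ are uniformly bounded on compacta. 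I expect no difficulty beyond verifying these estimates and the $\cE$-versus-$\cE_+$ reduction.
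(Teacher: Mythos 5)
Your proof is correct and follows essentially the same route as the paper's: the identity of the original topology with the $\cL_s$-induced one comes from the projective description in Proposition~\ref{prop20} (your $\cE$-versus-$\cE_+$ reduction via $\sqrt{\abso{\varphi}^2+1}$), and the inequality against the $\cL_b$-induced topology is exactly the hypocontinuity of the multiplication $E \times \cE \to F$ that the paper invokes, which you verify directly on the Banach steps of the strict inductive limit. The only difference is one of detail: the paper cites partial continuity and hypocontinuity in a single line, while you prove these ingredients explicitly.
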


\begin{proof}
  We restrict the proof to the space $L^p_\textrm{comp}$. The mapping
  \[ L^p_\textrm{comp} \to \cL(\cE, L^p), f \mapsto  [\varphi \mapsto \varphi f] \]
  is injective, hence it is an embedding. The identity of the topology of $L^p_\textrm{comp}$ with the induced topology is a consequence of the partial continuity on the one hand and of the hypocontinuity on the other hand of the bilinear multiplication mapping
  \[ L^p_\textrm{comp} \times \cE \to L^p,\quad (f, \varphi) \mapsto f \varphi. \qedhere \]
\end{proof}

Next, let us treat the case of finite differentiability.

\begin{proposition}\label{prop22}
  The strict (LB)-topologies of the spaces $\cD^m$, $m \in \bN_0$, can be described projectively by the seminorms $p_\varphi$, $\varphi \in \cE$, defined by
  \[ \cD^m \ni f \mapsto p_\varphi(f) = \sup_{\abso{\alpha} \le m } \norm{\varphi \pd^\alpha f}_{\infty }. \]
\end{proposition}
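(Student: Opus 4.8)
The plan is to mimic the proof of Proposition~\ref{prop18}, reducing the finite-differentiability case $\cD^m$ to the already settled case $m=0$, that is, to $\cD^0=\cK$ treated in Proposition~\ref{prop20}. The key structural input is the projective limit representation
\[
  \cD^m = \varprojlim_{\abso{\alpha}\le m}(\pd^\alpha)^{-1}\cK,
\]
by which I mean that the strict (LB)-topology of $\cD^m$ coincides with the initial (projective) topology induced by the differentiation operators $\pd^\alpha\colon\cD^m\to\cD^0=\cK$, $\abso{\alpha}\le m$. Granting this, Proposition~\ref{prop20} gives that the topology of $\cK$ is generated by the seminorms $g\mapsto\norm{\varphi g}_\infty$, $\varphi\in\cE_+$; pulling these back along the $\pd^\alpha$ shows that the topology of $\cD^m$ is generated by the family $f\mapsto\norm{\varphi\,\pd^\alpha f}_\infty$, $\varphi\in\cE_+$, $\abso{\alpha}\le m$. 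Since for fixed $\varphi$ this is a finite family, its maximum over $\abso{\alpha}\le m$ generates the same topology, and passing from $\varphi\in\cE_+$ to arbitrary $\varphi\in\cE$ (using a smooth positive majorant of $\abso{\varphi}$, e.g.\ $\abso{\varphi}^2+1$) yields an equivalent system; this produces exactly the seminorms $p_\varphi(f)=\sup_{\abso{\alpha}\le m}\norm{\varphi\,\pd^\alpha f}_\infty$, $\varphi\in\cE$.

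For the projective limit representation itself one inclusion is routine: each $\pd^\alpha\colon\cD^m_K\to\cK_K$ is norm-decreasing, hence $\pd^\alpha\colon\cD^m\to\cK$ is continuous for the (LB)-topologies, so the initial topology is coarser than the strict (LB)-topology. Moreover both topologies induce on each step $\cD^m_K$ the Banach norm $\sup_{\abso{\alpha}\le m}\norm{\pd^\alpha\,\cdot}_\infty$, because the subspace topology coming from the initial topology is the initial topology of the restricted maps $\pd^\alpha\colon\cD^m_K\to\cK_K$, which is precisely this norm.

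The main obstacle is the reverse inclusion, namely that the strict (LB)-topology is coarser than, hence equal to, the initial topology. This is the genuine content and cannot be obtained by formal inductive-limit arguments alone, since a net converging in the initial topology need not be supported in a single compact set; the difficulty is exactly the escape of supports to infinity. I would settle it by the same partition-of-unity and weight-function technique that underlies Proposition~\ref{prop20} and Lemma~\ref{lemma4} (and the projective descriptions of $\cD(\Omega)$ in \cite{GL} and \cite[Example~7]{H}): starting from a basic convex balanced (LB)-neighbourhood $U$ of $0$, determined by bounds $\epsilon_j>0$ on the steps $\cD^m_{K_j}$ of an exhaustion $K_1\Subset K_2\Subset\cdots$ of $\Omega$, one constructs a weight $\psi\in\cE_+$ growing fast enough on the shells $K_{j+1}\setminus K_j$ so that $\{f:\sup_{\abso{\alpha}\le m}\norm{\psi\,\pd^\alpha f}_\infty\le\delta\}\subseteq U$ for suitable $\delta>0$. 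The derivatives of the cut-off functions entering through Leibniz' rule are absorbed into $\psi$, which is exactly why the extra differentiation is harmless and a single weight $\varphi\in\cE$ can serve all $\abso{\alpha}\le m$ simultaneously.
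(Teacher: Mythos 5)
Your proposal is correct and takes essentially the same route as the paper: the paper's proof is exactly the reduction of $\cD^m$ to $\cD^0=\cK$ via the finite projective limit representation $\cD^m = \varprojlim_{\abso{\alpha}\le m}(\pd^\alpha)^{-1}\cD^0$ combined with Proposition~\ref{prop20}. The only difference is one of detail: the paper cites this representation as given (it goes back to \cite[pp.~98--99]{Sch5}), whereas you correctly single it out as the genuine content and sketch the standard partition-of-unity and weight-function argument for the nontrivial inclusion, including the absorption of the Leibniz terms into the weight, which is sound.
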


The \emph{proof} is a direct consequence of the representation as a finite projective limit, i.e. of $\cD^m = \varprojlim_{\abso{\alpha} \le m} ( (\pd^\alpha)^{-1} \cD^0)$, $\cD^0=\cK$,
and Proposition~\ref{prop20}.

\begin{remark}
  Note that the topology on $\cD$ defined by the seminorms $p_{m,\varphi}$, $m \in \bN_0$, $\varphi \in \cE$,
  \[ \cD \ni f \mapsto p_{m, \varphi}(f) = \sup_{\abso{\alpha} \le m} \norm{\varphi \pd^\alpha f}_\infty, \]
  is the topology of the space $\cD^F = \varprojlim_{m \in \bN_0} \cD^m$, which is coarser than the topology of $\cD$, $\cD = \varinjlim_{K\textrm{ compact}} \cD_K$ (see \cite[p.99]{Sch5}). L.~Schwartz remarks there that the topology of $\cD_F$ on $\cD$ is the ``natural'' topology. That the topologies of $\cD_F$ and $\cD$ are different can be easily seen by the known example: $S = \sum_{m=0}^\infty \delta_m^{(m)} \in \cD'(\bR)$ but $S \not\in \cD^{\prime F}(\bR)$, the order of $S$ is not finite. For a more detailed discussion of $\cD^F$ and a sequence space representation see~\cite{BargetzDF}.
\end{remark}

Finally, we describe the bounded sets in $\cE^m$ and in $\cD^{\prime m}$, $m \in \bN_0$:

\begin{proposition}
  The bounded subsets of the space $\cD^{\prime m}$ can be characterised by
  \[ \exists \varphi \in \cE: B \subset \pd^\alpha(\varphi B_{1,1}),\ \abso{\alpha} \le m,\]
  where $B_{1,1} = \{ \mu \in \cM^1: \norm{\mu} \le 1 \}$. The bounded subsets $B$ of the Fr\'echet space $\cE^m$ can be characterised by:
  \[ \exists \varphi \in \cE,  \pd^\alpha B \subset \varphi B_{1,\infty} \forall  \abso{\alpha} \le m. \]
\end{proposition}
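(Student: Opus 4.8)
The plan is to treat the two characterisations separately, the one for $\cE^m$ being a finite-order version of Lemma~\ref{lemma4} and the one for $\cD^{\prime m}$ its dual counterpart. Throughout I read the condition on $\cD^{\prime m}$ as $B\subset\sum_{\abso{\alpha}\le m}\pd^\alpha(\varphi B_{1,1})$, i.e.\ as a finite sum over all multi-indices of length at most $m$; this is the form dictated by the structure theorem for distributions of finite order and it matches the quantifier $\forall\,\abso{\alpha}\le m$ appearing on the $\cE^m$-side.

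For $\cE^m$ the equivalence is essentially Lemma~\ref{lemma4} applied to each derivative. If $B$ is bounded in $\cE^m$ then, since each $\pd^\alpha\colon\cE^m\to\cE^0$ with $\abso{\alpha}\le m$ is continuous, every $\pd^\alpha B$ is bounded in $\cE^0$, so Lemma~\ref{lemma4} yields $\varphi_\alpha\in\cE_+$ with $\pd^\alpha B\subset\varphi_\alpha B_{1,\infty}$. As there are only finitely many $\alpha$ with $\abso{\alpha}\le m$, I would merge them into a single weight by setting $\varphi\coleq 1+\sum_{\abso{\alpha}\le m}\varphi_\alpha\in\cE_+$; then $0\le\varphi_\alpha\le\varphi$ and $1/\varphi\le 1$, so for $\norm{h}_\infty\le 1$ the identity $\varphi_\alpha h=\varphi\cdot(\varphi_\alpha h/\varphi)$ with $\varphi_\alpha h/\varphi\in\cC_0$ and $\abso{\varphi_\alpha h/\varphi}\le 1$ exhibits $\varphi_\alpha B_{1,\infty}\subset\varphi B_{1,\infty}$. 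This gives $\pd^\alpha B\subset\varphi B_{1,\infty}$ for all $\abso{\alpha}\le m$. The converse is immediate: if $\pd^\alpha B\subset\varphi B_{1,\infty}$ then $\abso{\pd^\alpha f(x)}\le\abso{\varphi(x)}$ for $f\in B$, so every defining seminorm $\sup_{x\in K}\abso{\pd^\alpha f(x)}$ of $\cE^m$ is bounded on $B$.

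For $\cD^{\prime m}$ I would exploit the projective description of $\cD^m$ from Proposition~\ref{prop22} together with duality. Since $\cD^m$ is a strict (LB)-space, hence barrelled, a bounded set $B\subset\cD^{\prime m}$ is equicontinuous, so by Proposition~\ref{prop22} there are $\varphi\in\cE$ and $C>0$ with $\abso{\langle T,f\rangle}\le C\sup_{\abso{\alpha}\le m}\norm{\varphi\,\pd^\alpha f}_\infty$ for all $T\in B$ and $f\in\cD^m$. Writing $N$ for the number of multi-indices of length $\le m$, the map $J_\varphi\colon\cD^m\to(\cC_0)^N$, $f\mapsto(\varphi\,\pd^\alpha f)_{\abso{\alpha}\le m}$, is well defined (each $\varphi\,\pd^\alpha f$ is continuous with compact support) and turns the right-hand side into $C\norm{J_\varphi f}$ for the maximum norm on $(\cC_0)^N$. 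By Hahn--Banach each $T\in B$ factors as $T=\ell_T\circ J_\varphi$ with $\norm{\ell_T}\le C$; since the dual of $(\cC_0)^N$ under the maximum norm is $(\cM^1)^N$ with the norm $(\mu_\alpha)\mapsto\sum_\alpha\norm{\mu_\alpha}$, this produces measures $\mu_\alpha^{T}\in\cM^1$ with $\sum_{\abso{\alpha}\le m}\norm{\mu_\alpha^{T}}\le C$ and
\[
  \langle T,f\rangle=\sum_{\abso{\alpha}\le m}\langle\mu_\alpha^{T},\varphi\,\pd^\alpha f\rangle=\sum_{\abso{\alpha}\le m}(-1)^{\abso{\alpha}}\langle\pd^\alpha(\varphi\mu_\alpha^{T}),f\rangle .
\]
Hence $T=\sum_{\abso{\alpha}\le m}(-1)^{\abso{\alpha}}\pd^\alpha(\varphi\mu_\alpha^{T})$, and absorbing the signs into the balanced set $B_{1,1}$ and the constant $C$ into the weight ($\tilde\varphi\coleq C\varphi$) gives the uniform inclusion $B\subset\sum_{\abso{\alpha}\le m}\pd^\alpha(\tilde\varphi B_{1,1})$.

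The converse for $\cD^{\prime m}$ is the routine direction: for fixed $\varphi\in\cE$ and $\abso{\alpha}\le m$ the map $\cM^1\to\cD^{\prime m}$, $\mu\mapsto\pd^\alpha(\varphi\mu)$, satisfies $\abso{\langle\pd^\alpha(\varphi\mu),f\rangle}=\abso{\langle\mu,\varphi\,\pd^\alpha f\rangle}\le\norm{\varphi\,\pd^\alpha f}_\infty$ for $\mu\in B_{1,1}$, so it sends $B_{1,1}$ to an equicontinuous, hence bounded, subset of $\cD^{\prime m}$, and a finite sum of such sets is again bounded. I expect the main obstacle to be the forward direction for $\cD^{\prime m}$: one must (i) justify that boundedness upgrades to equicontinuity and thus to domination by a \emph{single} seminorm from Proposition~\ref{prop22}, uniformly over $B$, and (ii) correctly identify the dual of the finite product $(\cC_0)^N$ under the maximum norm as $(\cM^1)^N$ with the summed variation norm, so that the Hahn--Banach extension delivers the measures $\mu_\alpha$ with a \emph{common} weight $\varphi$ for all $\alpha$. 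The $\cE^m$-part, by contrast, is only the bookkeeping of passing from finitely many weights $\varphi_\alpha$ to one.
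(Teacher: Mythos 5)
Your proof is correct; the $\cE^m$ half coincides with the paper's argument, while the $\cD^{\prime m}$ half takes a genuinely different route. The paper settles both claims in one sentence, invoking the representations $\cE^m=\varprojlim_{\abso{\alpha}\le m}(\pd^\alpha)^{-1}\cE^0$ and $\cD^{\prime m}=\varprojlim_{\abso{\alpha}\le m}\pd^\alpha\cM$ together with Lemma~\ref{lemma4}; for $\cE^m$ this is exactly what you do (apply Lemma~\ref{lemma4} to each $\pd^\alpha B$ and merge the finitely many weights --- a bookkeeping step the paper leaves implicit and you carry out correctly). For $\cD^{\prime m}$, the representation the paper cites is really the \emph{dual} one, i.e.\ the finite sum (inductive limit) $\sum_{\abso{\alpha}\le m}\pd^\alpha\cM$, and the paper's one-line proof tacitly uses its regularity (every bounded set of $\cD^{\prime m}$ comes from bounded sets of $\cM$) before applying Lemma~\ref{lemma4} inside $\cM$. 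You instead prove this structural fact from scratch with the weight built in: barrelledness of the strict (LB)-space $\cD^m$ upgrades boundedness to equicontinuity, the projective description of Proposition~\ref{prop22} converts equicontinuity into domination by a single seminorm $p_\varphi(f)=\sup_{\abso{\alpha}\le m}\norm{\varphi\,\pd^\alpha f}_\infty$ (here you should add one line: the family $\{p_\varphi : \varphi\in\cE\}$ is directed, e.g.\ $p_{\varphi_1},p_{\varphi_2}\le p_{1+\varphi_1^2+\varphi_2^2}$, so a finite intersection of seminorm balls contains a single one), and Hahn--Banach plus Riesz representation on $(\cC_0)^N$ deliver the measures $\mu_\alpha^T$ with a common weight. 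The paper's route is shorter but rests on a structure statement it does not prove; yours is self-contained modulo Proposition~\ref{prop22} and in effect proves that statement. Finally, your decision to read the condition as $B\subset\sum_{\abso{\alpha}\le m}\pd^\alpha(\varphi B_{1,1})$ is not merely a convenience but necessary: the literal ``for every $\abso{\alpha}\le m$: $B\subset\pd^\alpha(\varphi B_{1,1})$'' reading suggested by the paper's phrasing is false already for $B=\{\delta'\}\subset\cD^{\prime 1}$ (the case $\alpha=0$ would force $\delta'$ to be a measure), and a ``for some $\alpha$'' reading fails for $n\ge 2$ (e.g.\ $\pd_1\delta+\pd_2\delta$ is not of the form $\pd^\alpha(\varphi\mu)$ for any single $\abso{\alpha}\le 1$); the finite sum is precisely what dualising the projective-limit representation of $\cD^m$ produces, so it is the reading under which the proposition is true and under which both your argument and the paper's intended one go through.
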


The \emph{proof} results from the representations as finite projective limits:
\[ \cD^{\prime m} = \varprojlim_{\abso{\alpha} \le m} \pd^\alpha \cM, \qquad \cE^m = \varprojlim_{\abso{\alpha} \le m} (\pd^\alpha)^{-1} \cE^0, \]
respectively, and the description of bounded sets in $\cM$ and $\cE^0$ in Lemma~\ref{lemma4}.

\section{Projective description of Mackey topologies and Buck's topology on the space $\cB^{\lowercase{m}}$, $\lowercase{m} \in \bN_0$.}\label{sec7}

In \cite[Proposition 6.4]{BNO}, a characterisation of relatively compact subsets $C$ of $\cM^1$ is given: $\exists (g,h) \in \cC_0 \times L^1: C \subset g(h * B_{1,1})$, $B_{1,1} = \{ \mu \in \cM^1: \norm{\mu}_1 \le 1 \}$. This is not correct: A counterexample is $C = \{ \delta \}$. We prove the following modification, essentially due to A.~Grothendieck:

\begin{proposition}\label{prop24}
  For a subset $C$ of $\cM^1$ the following assertions are equivalent:
  \begin{enumerate}[label=(\roman*)]
  \item\label{prop24uno} $C$ is $\sigma(\cM^1, (\cM^1)')$-relatively compact;
  \item\label{prop24due} $\exists g \in \cC_0$: $C \subset g B_{1,1}$, $B_{1,1} = \{ \mu \in \cM^1: \norm{\mu}_1 \le 1 \}$;
  \item\label{prop24tres} $C$ is $\tau(\cM^1, \cC_0)$-relatively compact.
  \end{enumerate}
\end{proposition}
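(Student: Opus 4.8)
The plan is to establish the cycle \ref{prop24due} $\Rightarrow$ \ref{prop24tres} $\Rightarrow$ \ref{prop24uno} $\Rightarrow$ \ref{prop24due} and to concentrate essentially all of the work in the final implication, which is the factorisation of a weakly compact set and is the part ``essentially due to Grothendieck''. Before starting the cycle I would record that each of the three conditions forces $C$ to be norm-bounded: condition \ref{prop24due} gives $\norm{g\nu}_1 \le \norm{g}_\infty$ for every $\nu \in B_{1,1}$, while for \ref{prop24uno} and \ref{prop24tres} the $\sigma(\cM^1,(\cM^1)')$- and $\tau(\cM^1,\cC_0)$-bounded sets coincide with the norm-bounded ones by Mackey's theorem. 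Norm-boundedness lets me work with $C$ inside a fixed multiple of $B_{1,1}$ throughout.

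For \ref{prop24due} $\Rightarrow$ \ref{prop24tres} I would write $g B_{1,1} = m_g(B_{1,1})$, where $m_g\colon \nu \mapsto g\nu$ is the transpose of the multiplier $M_g\colon \cC_0 \to \cC_0$, $f \mapsto gf$. Since $B_{1,1}$ is $\sigma(\cM^1,\cC_0)$-compact by the Banach--Alaoglu theorem and $m_g$ is continuous for the weak and the Mackey topologies of the pair $\langle \cM^1,\cC_0\rangle$, it then remains to exploit the decay of $g \in \cC_0$ to see that the weak-$*$ and the Mackey topology agree on the tight set $g B_{1,1}$, whence $g B_{1,1}$ and its subset $C$ are $\tau(\cM^1,\cC_0)$-relatively compact. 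For \ref{prop24tres} $\Rightarrow$ \ref{prop24uno} I would pass to the closed absolutely convex hull (which stays $\tau(\cM^1,\cC_0)$-compact by Krein's theorem), use $\cC_0 \hookrightarrow (\cM^1)'$ together with the Mackey--Arens theorem to identify the compatible compact convex sets, and then read off $\sigma(\cM^1,(\cM^1)')$-compactness.

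The substance is \ref{prop24uno} $\Rightarrow$ \ref{prop24due}. Here I would start from Grothendieck's description of the relatively weakly compact subsets of $\cM^1 = \cC_0'$ (norm-boundedness together with uniform countable additivity and uniform tightness) and then manufacture a single dominating function $g$ by a factorisation of exactly the kind already used in Lemma~\ref{lemma2} and Lemma~\ref{cor1}: regarding $\cM^1$ as a Banach module over a Fr\'echet algebra of multipliers acting by multiplication, and feeding the weakly compact $C$ into a factorisation theorem with a uniformly bounded approximate unit, one obtains $g \in \cC_0$ and cofactors of norm at most one, i.e.\ $C \subset g B_{1,1}$. I expect this to be the main obstacle, since it requires a \emph{single} $g \in \cC_0$ dominating all of $C$ simultaneously by multiplication alone. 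This is precisely the point at which the earlier convolution form $g(h * B_{1,1})$ of \cite[Proposition 6.4]{BNO} broke down — the convolution factor forces absolute continuity and thereby excludes $\delta$ — so the corrected argument must avoid any convolution and dominate purely multiplicatively.
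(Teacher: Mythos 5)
The decisive gap is in your step \ref{prop24due} $\Rightarrow$ \ref{prop24tres}, and it cannot be repaired, because that implication (and with it the proposition as printed) is false. The decay of $g$ makes the measures $g\mu$, $\mu\in B_{1,1}$, uniformly small near infinity, but it does nothing against \emph{local} concentration, which is exactly what weak and Mackey compactness additionally forbid; consequently $\sigma(\cM^1,\cC_0)$ and $\tau(\cM^1,\cC_0)$ do \emph{not} coincide on $gB_{1,1}$. Concretely (take $n=1$): let $g\in\cC_0$ satisfy $g\ge 1$ on $[0,1]$ and put $C=\{\delta_{1/k}:k\in\bN\}$. Since $\delta_{1/k}=g\cdot\bigl(g(1/k)^{-1}\delta_{1/k}\bigr)$ and $g(1/k)^{-1}\le 1$, condition \ref{prop24due} holds. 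Yet $C$ is not $\sigma(\cM^1,(\cM^1)')$-relatively compact: the map $\cM^1\to\ell^1$, $\mu\mapsto(\mu(\{1/k\}))_{k\in\bN}$, is norm-continuous, hence weak--weak continuous, and it sends $C$ onto the unit vector basis of $\ell^1$, which by the Schur property of $\ell^1$ has no weakly convergent subsequence; Eberlein--\v{S}mulian then excludes relative weak compactness of $C$. Nor is $C$ relatively $\tau(\cM^1,\cC_0)$-compact: choose $f_j\in\cD$ with $\norm{f_j}_\infty=f_j(1/j)=1$, with pairwise disjoint supports so small that $0\notin\supp f_j$ and $1/k\notin\supp f_j$ for $k\neq j$; then $f_j\to 0$ weakly in $\cC_0$, so $D\coleq\overline{\mathrm{acx}}\,\{f_j:j\in\bN\}$ is absolutely convex and weakly compact by Krein's theorem, and $\sup_{f\in D}\abso{\langle f,\delta_{1/k}-\delta_0\rangle}\ge f_k(1/k)=1$ for every $k$. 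Since any $\tau$-cluster point of the sequence $(\delta_{1/k})_k$ would have to be its unique $\sigma(\cM^1,\cC_0)$-limit $\delta_0$, and $\delta_0$ is not a $\tau$-cluster point, $C$ has no $\tau(\cM^1,\cC_0)$-cluster point at all, hence is not relatively $\tau$-compact. In effect, \ref{prop24due} says precisely that $C$ is bounded and uniformly tight, which is strictly weaker than \ref{prop24uno} (uniform countable additivity is missing). For the comparison you were asked to make, you should know that the paper's own proof of \ref{prop24due} $\Rightarrow$ \ref{prop24uno} breaks at the corresponding point: from pointwise convergence of $T_k=\langle g\varphi_k,\cdot\,\rangle$ on the Banach space $\cM^1$, Banach--Steinhaus yields only $\sup_k\norm{T_k}<\infty$ and uniform convergence on norm-compact sets, \emph{not} $\sup_{\mu\in B_{1,1}}\abso{T_k(\mu)}=\norm{g\varphi_k}_\infty\to 0$; bumps $\varphi_k$ sliding towards a point where $g\neq 0$ refute that step as well.

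Two further points. Your argument for \ref{prop24tres} $\Rightarrow$ \ref{prop24uno} is also not a proof: Mackey--Arens identifies the topologies compatible with the dual pair $\langle\cM^1,\cC_0\rangle$, all of which have dual $\cC_0$, so it can never produce compactness for $\sigma(\cM^1,(\cM^1)')$, i.e.\ with respect to the much larger Banach-space dual. That implication is true, but it is precisely the content of Grothendieck's theorem (the Corollaire the paper cites for \ref{prop24uno} $\Leftrightarrow$ \ref{prop24tres}), not a soft consequence of Krein plus Mackey--Arens. On the other hand, your plan for \ref{prop24uno} $\Rightarrow$ \ref{prop24due} --- the one substantial implication that is actually correct --- is essentially sound and matches what the paper extracts from Grothendieck's Th\'eor\`eme 2: weak compactness gives boundedness and, by applying uniform countable additivity to the annuli $\{R_k<\abso{x}\le R_{k+1}\}$, uniform tightness $\sup_{\mu\in C}\abso{\mu}(\{\abso{x}>R_k\})\le 4^{-k}$; from this one can write down directly a $g\in\cC_0$ decaying so slowly that $\int g^{-1}\,\ud\abso{\mu}\le 1$ for all $\mu\in C$, i.e.\ $C\subset gB_{1,1}$. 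No Cohen--Hewitt/Voigt-type module factorisation is needed there, and indeed those theorems (as used in Lemma~\ref{lemma2} and Lemma~\ref{cor1}) factor \emph{norm}-compact sets; their hypotheses are not satisfied by merely weakly compact ones. In summary: \ref{prop24uno} $\Rightarrow$ \ref{prop24due} and \ref{prop24uno} $\Leftrightarrow$ \ref{prop24tres} hold, \ref{prop24due} $\Rightarrow$ \ref{prop24uno}/\ref{prop24tres} fails, and neither your cycle nor the paper's proof closes that gap --- because it cannot be closed.
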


\begin{proof}
  \ref{prop24uno} $\Rightarrow$ \ref{prop24due}: the relative compactness of $C$ with respect to the topology $\sigma(\cM^1, (\cM^1)')$ implies the existence of $g \in \cC_0$ such that $C \subset g B_{1,1}$: apply \cite[Theorem 2, (4) (b), pp.~146--147]{GrothenCanadian}; compare also \cite[p.~413]{GrothendieckEVTII}.

  \ref{prop24due} $\Rightarrow$ \ref{prop24uno}: Let $(\varphi_k)_k$ be a sequence converging to $0$ in $\sigma(\cC_0, \cM^1)$. By Theorem~2 in~\cite{GrothenCanadian} in order to show the relative compactness of $C$ we have to prove $\langle \varphi_k, g B_{1,1} \rangle = \langle g \varphi_k, B_{1,1} \rangle \to 0$ if $k \to \infty$. The continuity of the multiplication $\cC_0 \times \cM^1 \to \cM^1$ implies its weak partial continuity, i.e., $(\cC_0, \sigma(\cC_0, \cM^1)) \times \cM^1 \to (\cM^1, \sigma(\cM^1,(\cM^1)'))$. Thus, the linear forms $T_k$, defined by $T_k \colon \cM^1 \to \bC$, $\mu \mapsto {}_{\cC_0} \langle g \varphi_k, \mu \rangle_{\cM^1}={}_{\cM^1} \langle \mu \varphi_k,  g \rangle_{\cC_0}$ converge on $\cM^1$ pointwise. $\cM^1$ is a Banach space such that $\langle g \varphi_k, B_{1,1} \rangle \to 0$ if $k \to \infty$ by the Theorem of Banach-Steinhaus.

  \ref{prop24uno} $\Longleftrightarrow$ \ref{prop24tres}: see \cite[Corollaire du Th\'eor\`eme 2, p.~149]{GrothenCanadian}.
\end{proof}

\begin{remarks}
  \begin{enumerate}[label=(\arabic*)]
  \item Concerning Buck's topology on the space $\cB^0 = \cB\cC$: If $f \in \cB^0$ and $g \in \cC_0$ then $\norm{g f }_\infty = \sup_{\norm{\mu} \le 1 } \abso{\langle f, g \mu \rangle }$,
    which shows that Buck's topology on $\cB^0$ is the topology of uniform convergence on $\tau(\cM^1, \cC_0)$-relatively compact subsets of $\cM^1$. If we denote (as in \cite[Prop. 1.2.1, p.~6]{DVAF}) Buck's topology on $\cB^0$ by an index $b$ (not to be confounded with ``b'' in $\cL_b(E,F))$ then we have:
    \[ \cB^0_b = (\cB^0, \kappa(\cB^0, (\cM^1, \tau(\cM^1, \cC_0)))). \]

  \item Concerning a generalisation of Buck's topology on the space $\cB = \cD_{L^\infty}$: The description of the topology $\kappa(\cB, \cD'_{L^1}) = \tau ( \cB, \cD'_{L^1})$ by the seminorms $p_{g, \alpha}$, $g \in \cC_0$, $\alpha \in \bN_0^n$, defined by $\cB \ni \varphi \mapsto p_{g, \alpha}(\varphi) = \norm{ g \pd^\alpha \varphi}_\infty$, is given in \cite[p.~51 and (3.5) Cor., p.~71]{DD}.
  \end{enumerate}
\end{remarks}

An application of Proposition~\ref{prop24} is the description of Buck's topology on the space $\cB^m$, $m \in \bN_0$:

\begin{proposition}\label{prop25}
  Let $m \in \bN_0$.
  \begin{enumerate}[label=(\roman*)]
  \item\label{prop25uno} The $\tau(\cD^{\prime m}_{L^1}, \dot\cB^m)$-relatively compact subsets $C$ of $\cD^{\prime m}_{L^1}$ are characterised by:
    \[ \forall \alpha, \abso{\alpha} \le m, \exists g_\alpha \in \cC_0: C \subset \pd^\alpha(g_\alpha B_{1,1}). \]
  \item\label{prop25due} Seminorms which generate the topology $\kappa(\cB^m, (\cD^{\prime m}_{L^1}, \tau(\cD^{\prime m}_{L^1}, \dot\cB^m)))$ are $p_g$, $g \in \cC_0$, defined by
    \[ \cB^m \ni \varphi \mapsto p_g(\varphi) = \sup_{\abso{\alpha} \le m} \norm{g \pd^\alpha \varphi}_\infty. \]
  \end{enumerate}
\end{proposition}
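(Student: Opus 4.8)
The plan is to reduce both parts to the case $m=0$, which is precisely Proposition~\ref{prop24}, using the projective description $\dot\cB^m=\varprojlim_{|\alpha|\le m}(\pd^\alpha)^{-1}\dot\cB^0$. The map $j\colon\dot\cB^m\to(\cC_0)^{\{|\alpha|\le m\}}$, $\varphi\mapsto(\pd^\alpha\varphi)_{|\alpha|\le m}$, is an isometric embedding onto a closed subspace, so its transpose $q=j^t\colon(\cM^1)^{\{|\alpha|\le m\}}\to\cD^{\prime m}_{L^1}$, $(\mu_\alpha)\mapsto\sum_{|\alpha|\le m}(-1)^{|\alpha|}\pd^\alpha\mu_\alpha$, is a metric surjection realising $\cD^{\prime m}_{L^1}=(\dot\cB^m)'$. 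Since $B_{1,1}$ is balanced the signs are irrelevant, so I read \ref{prop25uno} in the form $C\subset\sum_{|\alpha|\le m}\pd^\alpha(g_\alpha B_{1,1})$; collapsing the $g_\alpha$ to a single $g\in\cC_0$ with $|g_\alpha|\le g$ is harmless because then $g_\alpha B_{1,1}\subset gB_{1,1}$.

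For the implication ``$\Leftarrow$'' of \ref{prop25uno} I note that $\pd^\alpha\colon\cM^1\to\cD^{\prime m}_{L^1}$ is, up to sign, the transpose of the continuous map $\pd^\alpha\colon\dot\cB^m\to\cC_0$; being $\sigma(\cM^1,\cC_0)$--$\sigma(\cD^{\prime m}_{L^1},\dot\cB^m)$-continuous it is also $\tau(\cM^1,\cC_0)$--$\tau(\cD^{\prime m}_{L^1},\dot\cB^m)$-continuous. By Proposition~\ref{prop24} each $g_\alpha B_{1,1}$ is $\tau(\cM^1,\cC_0)$-relatively compact, so its image $\pd^\alpha(g_\alpha B_{1,1})$ is $\tau$-relatively compact in $\cD^{\prime m}_{L^1}$, and a finite sum of relatively compact sets is relatively compact.

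The implication ``$\Rightarrow$'' is the crux. Let $C$ be $\tau$-relatively compact, hence $\sigma$-relatively compact. I would lift $C$ through $q$ to a set $\widetilde C\subset(\cM^1)^{\{|\alpha|\le m\}}$ relatively compact for the product topology $\bigl(\tau(\cM^1,\cC_0)\bigr)^{\{|\alpha|\le m\}}$; then Proposition~\ref{prop24} applied to each projection $\pi_\alpha(\widetilde C)$ gives $g_\alpha\in\cC_0$ with $\pi_\alpha(\widetilde C)\subset g_\alpha B_{1,1}$, and $q(\widetilde C)\supset C$ yields the claim. When $m=2k$ is even this lifting is free: the Bessel identity $\delta=(1-\Delta_n)^k L_{2k}$ makes $R\colon(f_\alpha)_\alpha\mapsto\sum_{|\beta|\le m}c_\beta\,L_{2k}\ast f_\beta$ a continuous linear retraction of $(\cC_0)^{\{|\alpha|\le m\}}$ onto $j(\dot\cB^m)$ (the apparent top-order trouble is harmless, since the singular parts of $\pd^\beta L_{2k}$, $|\beta|\le 2k$, are $\delta$-combinations whose convolution with $\cC_0$ stays in $\cC_0$), so $R^t$ is a linear section of $q$ and $\widetilde C=R^t(C)$ works; equivalently $(1-\Delta_n)^k\colon\cM^1\to\cD^{\prime m}_{L^1}$ is an isomorphism and Leibniz' rule does the rest. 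The genuine obstacle is the odd-order case in dimension $n\ge2$: there no constant-coefficient elliptic operator of order $m$ exists, $j(\dot\cB^m)$ need not be complemented, and $q$ has no linear section. I would then obtain the lifting by a direct weak-compactness argument, transporting Grothendieck's criterion (the mechanism behind Proposition~\ref{prop24}) through the isometric embedding $j$ to force the coordinates of any bounded lift of $C$ to be equi-integrable. This is the one step that goes beyond a formal reduction to Proposition~\ref{prop24}.

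Finally, \ref{prop25due} is a formal consequence of \ref{prop25uno}. The topology $\kappa\bigl(\cB^m,(\cD^{\prime m}_{L^1},\tau)\bigr)$ is generated by the seminorms $p_C(\varphi)=\sup_{S\in C}\abso{\langle\varphi,S\rangle}$ with $C$ ranging over the $\tau$-relatively compact sets. For $g\in\cC_0$ the set $C_g=\bigcup_{|\alpha|\le m}\pd^\alpha(gB_{1,1})$ is $\tau$-relatively compact by \ref{prop25uno}, and
\[
  p_{C_g}(\varphi)=\sup_{|\alpha|\le m}\ \sup_{\nu\in B_{1,1}}\abso{\langle\pd^\alpha\varphi,g\nu\rangle}=\sup_{|\alpha|\le m}\norm{g\,\pd^\alpha\varphi}_\infty=p_g(\varphi),
\]
so every $p_g$ occurs among the defining seminorms of the $\kappa$-topology. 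Conversely, for any $\tau$-relatively compact $C$, \ref{prop25uno} gives $g_\alpha\in\cC_0$ with $C\subset\sum_{|\alpha|\le m}\pd^\alpha(g_\alpha B_{1,1})$, and choosing $g\in\cC_0$ with $|g_\alpha|\le g$ one estimates
\[
  p_C(\varphi)\le\sum_{|\alpha|\le m}\sup_{\nu\in B_{1,1}}\abso{\langle g_\alpha\pd^\alpha\varphi,\nu\rangle}\le\sum_{|\alpha|\le m}\norm{g\,\pd^\alpha\varphi}_\infty\le\Bigl(\sum_{|\alpha|\le m}1\Bigr)\,p_g(\varphi).
\]
Hence the two systems are equivalent and generate the same topology, which is the finite-order analogue of Buck's strict topology.
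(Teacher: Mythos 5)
Your overall strategy---realising $\dot\cB^m$ as a closed subspace of $(\cC_0)^{N}$ ($N$ the number of multi-indices $\alpha$ with $\abso{\alpha}\le m$), so that $\cD^{\prime m}_{L^1}=(\dot\cB^m)'$ becomes a quotient of $(\cM^1)^N$, and reducing everything to Proposition~\ref{prop24}---is exactly the reduction the paper invokes (its proof of \ref{prop25uno} is a one-line appeal to the finite inductive limit representation of $\cD^{\prime m}_{L^1}$ together with Proposition~\ref{prop24}, and \ref{prop25due} is stated to follow from \ref{prop25uno}). Your ``$\Leftarrow$'' direction of \ref{prop25uno}, your reading of the characterisation in the sum form $C\subset\sum_{\abso{\alpha}\le m}\pd^\alpha(g_\alpha B_{1,1})$, and your derivation of \ref{prop25due} from \ref{prop25uno} are all sound.

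The genuine gap is in ``$\Rightarrow$'' of \ref{prop25uno}, and it is twofold. First, your even-order shortcut is wrong in dimension $n\ge 2$: the operator $f\mapsto\pd^\beta(L_{2k}\ast f)$ with $\abso{\beta}=2k$ does not map $\cC_0$ into $\cC_0$ (nor into $L^\infty$). The claim that the singular part of $\pd^\beta L_{2k}$ is a combination of $\delta$-derivatives is a one-dimensional accident (e.g.\ $L_2''=L_2-\delta$ for $n=1$); for $n=2$, $k=1$, $\beta=(2,0)$ the symbol $-\xi_1^2/(1+\abso{\xi}^2)$ has no limit at infinity, so $\pd^\beta L_{2k}$ is a Calder\'on--Zygmund kernel and $\pd^\beta(1-\Delta_n)^{-k}$ is unbounded on $\cC_0$ (this is the classical failure of $C^0$-Schauder estimates, equivalently the unboundedness of iterated Riesz transforms on $L^\infty$). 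Hence your map $R$ is not even well defined into $\dot\cB^m$, and no complementation of $j(\dot\cB^m)$ in $(\cC_0)^N$ is obtained this way; the obstruction is not confined to odd $m$. Second, for the case you do recognise as problematic you only announce a plan (``I would then obtain the lifting by a direct weak-compactness argument\dots''), and no general principle can substitute for it: Mackey- or weakly compact sets do not lift through quotient maps of Banach spaces. Concretely, $\ell^2$ is a quotient of $\ell^1$, but by Schur's theorem every weakly compact subset of $\ell^1$ is norm compact, so the unit ball of $\ell^2$ is not contained in the image of any weakly compact subset of $\ell^1$. Thus the step from $\tau(\cD^{\prime m}_{L^1},\dot\cB^m)$-compactness of $C$ to the existence of a lift whose coordinate families are equi-tight and uniformly countably additive---i.e.\ the step where Grothendieck's weak-compactness criterion behind Proposition~\ref{prop24} must actually be brought to bear on the quotient---is established for no $m\ge 1$ in dimension $n\ge 2$. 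Since this step is the entire content of \ref{prop25uno} beyond Proposition~\ref{prop24}, the proposal does not prove the proposition.
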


\begin{proof}
  \ref{prop25uno} The characterisation of relatively compact subsets $C$ of $\cD^{\prime m}_{L^1}$ follows by the representation of $\cD^{\prime m}_{L^1}$ as a finite inductive limit and the corresponding characterisation of subsets in $\cM^1$ (Proposition~\ref{prop24}).
  
  \ref{prop25due} follows from \ref{prop25uno}.
\end{proof}

Finally, we describe the (LB)-topology on J. Horvath's space $\cO^m_C$, $m \in \bN_0$, by seminorms.

\begin{proposition}\label{prop26}
  For $m\in\mathbb{N}_0$ a subset $B$ of the Fr\'{e}chet space $\cO_C'^m=\cS'^m_\infty= (\cS_{-\infty}^m)' = (\cO_C^m)'$ is bounded if and only if there is a $\varphi\in\cS$ such that for all $\alpha\in\mathbb{N}_0^n$ with $|\alpha|\leq m$ we have
  \[
    B \subset \partial^\alpha (\varphi B_{1,1})
  \]
  where $B_{1,1} = \{\mu\in\cM^1\colon \|\mu\|_1 \leq 1 \}$.
\end{proposition}

\begin{proof}
  First note that $B$ is bounded if and only if for every $\alpha\in\mathbb{N}_0^n$ with $|\alpha|\leq m$ there is a $B_\alpha\subset \cO_C'^{0}=(\cS^0_{-\infty})' = (\cM^1)_\infty$ with $B=\partial^\alpha B_\alpha$. This means that for every $k\in\mathbb{N}_0$ the set $(1+|x|^2)^{k} \bigcup_{|\alpha|\leq m}B_\alpha$ is bounded in $\cM^1$ which by Lemma~\ref{lemma1} is equivalent to the existence of a function $\varphi\in\cS$ with the property that $\bigcup_{|\alpha|\leq m}B_\alpha \subset\varphi B_{1,1}$. This finishes the proof of the claimed characterisation.
\end{proof}

\begin{remark}
  The characterisation of bounded subsets of $\cO^{\prime m}_C$ in Proposition~\ref{prop26} again shows that the seminorms
  \[
    \cO^m_C \ni f \mapsto p_{m,g}(f) = \sup_{|\alpha|\leq m} \|g\partial^\alpha f\|_\infty, \qquad \varphi\in \cS
  \]
  generate the topology of $\cO^m_C$. Hence it provides a third, simpler proof of \cite[Proposition 2]{OW}.
\end{remark}

\textbf{Acknowledgement.} The authors would like to thank Andreas Debrouwere and Jasson Vindas for valuable comments. The authors also thank the anonymous referees for their remarks.

\end{document}